\begin{document}

\theoremstyle{plain}
\newtheorem{theorem}{Theorem}[section]
\newtheorem{claim}{Claim}
\newtheorem{lemma}[theorem]{Lemma}
\newtheorem{proposition}[theorem]{Proposition}
\newtheorem{corollary}[theorem]{Corollary}
\newtheorem{remark}{Remark}

\theoremstyle{remark}
\newtheorem{example}[theorem]{Example} 	
\theoremstyle{definition}
\newtheorem{definition}{Definition}
\hfuzz5pt 


\newcommand{\gt}{\tilde{g}}
\newcommand{\R}{\mathbb{R}}
\newcommand{\Z}{\mathbb{Z}}
\newcommand{\T}{\mathbb{T}}
\newcommand{\N}{\mathbb{N}}
\newcommand{\Zt}{\mathbb{Z}^2}
\newcommand{\Zd}{\mathbb{Z}^d}
\newcommand{\Ztd}{\mathbb{Z}^{2d}}
\newcommand{\Rt}{\R^2}
\newcommand{\Rtd}{\R^{2d}}
\newcommand{\Zp}{Z_p}
\newcommand{\al}{\alpha}
\newcommand{\be}{\beta}
\newcommand{\om}{\omega}
\newcommand{\Om}{\Omega}
\newcommand{\ga}{\gamma}
\newcommand{\Ga}{\Gamma}
\newcommand{\bga}{\boldsymbol\ga}
\newcommand{\bg}{\mathbf{g}}
\newcommand{\de}{\delta}
\newcommand{\la}{\lambda}
\newcommand{\La}{\Lambda}
\newcommand{\ala}{\la^\circ}
\newcommand{\aLa}{\La^\circ}
\newcommand{\nat}{\natural}
\newcommand{\G}{\mathcal{G}}
\newcommand{\bG}{\mathbf{G}}
\newcommand{\A}{\mathcal{A}}
\newcommand{\V}{\mathcal{V}}
\newcommand{\U}{\mathcal{U}}
\newcommand{\M}{\mathcal{M}}
\newcommand{\MV}{\mathcal{MV}}
\newcommand{\MP}{\mathcal{MP}}
\newcommand{\Sp}{\mathcal{S}}
\newcommand{\W}{\mathcal{W}}
\newcommand{\Hp}{\mathcal{H}}
\newcommand{\Mep}{M_\epsilon}
\newcommand{\Kep}{K_\epsilon}
\newcommand{\ka}{\kappa}
\newcommand{\cast}{\circledast}
\newcommand{\id}{\mbox{Id}}
\newcommand{\by}{\mathbf{Y}}
\newcommand{\bx}{\mathbf{X}}
\newcommand{\bz}{\mathbf{Z}}
\newcommand{\bn}{\mathbf{N}}
\newcommand{\bv}{\mathbf{v}}
\newcommand{\bu}{\mathbf{u}}
\newcommand{\bA}{\mathbf{A}}
\newcommand{\bC}{\mathbf{C}}
\newcommand{\bD}{\mathbf{D}}
\newcommand{\bh}{\mathbf{h}}
\newcommand{\bff}{\mathbf{f}}
\newcommand{\bH}{\mathbf{H}}
\newcommand{\bV}{\mathbf{V}}
\newcommand{\bX}{\mathbf{x}}
\newcommand{\byy}{\mathbf{y}}
\newcommand{\bZ}{\mathbf{z}}
\newcommand{\bt}{\mathbf{t}}
\newcommand{\bs}{\mathbf{\sigma}}
\newcommand{\bI}{\mathbf{I}}
\newcommand{\wpe}{w_{\psi_\epsilon}}

\newcommand{\lo}{{\ell^1}}
\newcommand{\Lo}{{L^1}}
\newcommand{\Lt}{{L^2}}
\newcommand{\SO}{{M^1}}
\newcommand{\SOc}{{S_{0,c}}}
\newcommand{\Lp}{{L^p}}
\newcommand{\Los}{{L^1_s}}
\newcommand{\WCl}{{W(C_0,\ell^1)}}
\newcommand{\lt}{{\ell^2}}

\newcommand{\conv}[2]{{#1}\,\ast\,{#2}}
\newcommand{\twc}[2]{{#1}\,\nat\,{#2}}
\newcommand{\mconv}[2]{{#1}\,\cast\,{#2}}
\newcommand{\set}[2]{\Big\{ \, #1 \, \Big| \, #2 \, \Big\}}
\newcommand{\inner}[2]{\langle #1,#2\rangle}
\newcommand{\innerBig}[2]{\Big \langle #1,#2 \Big \rangle}
\newcommand{\dotp}[2]{ #1 \, \cdot \, #2}

\newcommand{\Zpd}{\Zp^d}
\newcommand{\I}{\mathcal{I}}
\newcommand{\J}{\mathcal{J}}
\newcommand{\Zq}{Z_q}
\newcommand{\Zqd}{Zq^d}
\newcommand{\Zak}{\mathcal{Z}_{a}}
\newcommand{\C}{\mathbb{C}}
\newcommand{\F}{\mathcal{F}}

\newcommand{\convL}[2]{{#1}\,\ast_L\,{#2}}
\newcommand{\abs}[1]{\lvert#1\rvert}
\newcommand{\absbig}[1]{\big\lvert#1\big\rvert}
\newcommand{\absBig}[1]{\Big\lvert#1\Big\rvert}
\newcommand{\scp}[1]{\langle#1\rangle}
\newcommand{\norm}[1]{\lVert#1\rVert}
\newcommand{\normmix}[1]{\lVert#1\rVert_{\ell^{2,1}}}
\newcommand{\normbig}[1]{\big\lVert#1\big\rVert}
\newcommand{\normBig}[1]{\Big\lVert#1\Big\rVert}

\newcommand{\Conv}{\mathop{\scalebox{3}{\raisebox{-0.2ex}{$\ast$}}}}
\newcommand{\ConvN}{\mathop{\scalebox{3.5}{\raisebox{-0.2ex}{$\ast$}}}}
\newcommand{\Convtext}{\mathop{\scalebox{2.5}{\raisebox{-0.2ex}{$\ast$}}}}

\oddsidemargin  -0.31in
 \evensidemargin -0.31in
 \topmargin 0truept
 \headheight 5truept
 \headsep 0truept
 \footskip 20truept
 \textheight 235truemm
 \textwidth 180truemm
 \columnsep 8truemm


\title{Gabor frames for model sets.}

\author{Ewa Matusiak}

\author{\small EWA MATUSIAK\footnote{Department of Mathematics, University of Vienna, Austria, ewa.matusiak@univie.ac.at}}

\maketitle

\begin{abstract}
We generalize three main concepts of Gabor analysis for lattices to the setting of model sets: Fundamental Identity of Gabor Analysis, Janssen's representation of the frame operator and Wexler-Raz biorthogonality relations. Utilizing the connection between model sets and almost periodic functions, as well as Poisson's summations formula for model sets we develop a form of a bracket product that plays a central role in our approach. Furthermore, we show that, if a Gabor system for a model set admits a dual which is of Gabor type, then the density of the model set has to be greater than one.\\
{{\it Keywords:} Gabor frames; model sets; almost periodic functions; Poisson's summation formula}\\
{{\it Mathematics Subject Classification:}  42B05; 42B35; 42C15; 43A60}
\end{abstract}

\section{Introduction}\label{sec:intro}

One of the central themes within Gabor analysis for lattices is a duality theory for Gabor frames, including Wexler-Raz biorthogonality relations \cite{WR90} and Janssen's representation of a Gabor frame operator \cite{J95}. These results are closely connected with the so-called Fundamental Identity of Gabor Analysis, that can be derived from an application of Poisson's summation formula for the symplectic Fourier transform \cite{FL06}. These duality conditions allow us for example to specify whether a given Gabor system is a tight frame or whether two Gabor systems are dual to each other. An immediate consequence of the duality theory one can obtain necessary density conditions on a lattice so that a given Gabor system forms a frame. In this exposition we leave the setting of a lattice and consider a certain type of irregular sets of time-frequency shifts, namely model sets.

The first examples of model sets were studied by Meyer in \cite{Me72}. Meyer thought of model sets as generalizations of lattices which retain enough lattice-like structure to be useful for studying sampling problems in harmonic analysis \cite{MaMe10,Me12}. The crucial property of model sets is that there exists a form of Poisson's summation formula, which in turn will allow us to derive analogous duality theory as in the case of Gabor analysis for lattices.

First work on Gabor frames for model sets was done by Kreisel \cite{K16}, where he showed how Gabor frames for a simple model set can be made compatible with its topological dynamics and derived existence conditions for multiwindow Gabor frames for model sets. We will use some of his results here. A constructive approach, that is a characterization of tight and dual frames of semi-regular Gabor systems (where time shifts come from a lattice, and frequency shifts come from a model set, or vice versa), was recently obtained in \cite{M18}.

For general irregular sets of time-frequency shifts, that are however discrete and relatively separated, it is difficult to provide any constructive results as the tools to deal with such sets are missing. However, certain extensions into irregular Gabor frames were undertaken, for example in \cite{Ga09} or \cite{GOR15}. In \cite{Ga09}, the author gave a characterization of the weighted irregular Gabor tight frame and dual systems in $\Lt(\R^d)$ in terms of the distributional symplectic Fourier transform of a positive Borel measure on $\Rtd$ in the case where the window belongs to the Schwartz class. More recently in \cite{grrost17} the authors study nonuniform sampling in shift invariant spaces and construct semi-regular Gabor frames with respect to the class of totally positive functions. Their results are Beurling type results, expressed by means of density of the sampling sets.  

We utilize the connection between model sets and almost periodic functions and use harmonic analysis of the latter to develop a certain form of duality theory for Gabor frames for model sets. We rely strongly on Poisson's summation formula for model sets to introduce the so-called bracket product, in analogy to the bracket product for lattices introduced in \cite{BDR94} to study shift-invariant spaces, or later in multi-dimensional setting to study frames of translates \cite{L02, HLW02}. 

Almost periodic functions were recently investigated in the connection with Gabor frames in \cite{PU01,G04,BFG17}. As the space of almost periodic functions is non-separable, it can not admit countable frames, and the problem arises in which sense frame-type inequalities are still possible for norm estimation in this space \cite{G04,PU01,BFG17}. In \cite{BFG17} the authors also provide Gabor frames for a suitable separable subspaces of the space of almost periodic functions. We, on the other hand, use almost periodic functions as a tool to develop existence results for irregular Gabor frames for the space of square integrable functions.

The article is organized as follows. In Section~\ref{sec:not} we establish some notations and definitions that we will use throughout the article. In Section~\ref{sec:gabor frames lattice} we derive main identities of Gabor analysis for lattices using a different approach than the one presented in the literature, namely by constructing a certain bracket product. 
We introduce model sets in Section~\ref{sec:ms}, and we also shortly present main facts from the theory of almost periodic functions and point out some connections between the two. Section~\ref{sec:bracket}  is devoted to developing a technical tool, that is a bracket product for model sets, that we later use in Section~\ref{sec:main} to obtain Fundamental Identity of Gabor Analysis, Janssen representation and Wexler-Raz biorthogonality relations for Gabor systems for model sets. 


\section{Notation and Preliminaries}\label{sec:not}

We will work with the Hilbert space of square integrable functions on $\Lt(\R^d)$. The key element in time-frequency analysis is the {\it time-frequency shift operator} $\pi(z)$, $z=(x,\om)\in\Rtd$, which acts on $\Lt(\R^d)$ by
\begin{equation*}
\pi(z)f(t) = M_{\om} T_x f(t) = e^{2\pi i \om \cdot t } f(t-x)\,.
\end{equation*} 
Here $M_\om$ denotes the modulation operator and $T_x$ the translation operator which are defined as 
\begin{equation*}
T_x f(t) = f(t-x) \quad \mbox{and} \quad M_\om f(f) = e^{2\pi i \om \cdot t} f(t)\,.
\end{equation*}
We define a Fourier transform on $\Lt(\R^d)$ as
\begin{equation*}
\widehat{f}(\zeta) = \int_{\R^d} f(t) e^{-2\pi i t \cdot \zeta}\, dt\,,
\end{equation*}
and denote the inverse Fourier transform of $f$ by $\check{f}$. In the sequel we will distinguish between Fourier transform on $\Lt(\R^d)$ and Fourier transform on $\Lt(\Rtd)$ by writing $\F f$ for the latter, $f\in\Lt(\Rtd)$.The Fourier transform has a property of interchanging translation and modulation, that is
\begin{equation*}
\widehat{T_x f} = M_{-x} \widehat{f}  \quad \mbox{and} \quad \widehat{M_\om f} = T_\om \widehat{f}\,.
\end{equation*}
The translation and modulation operators obey the following commutation relation
\begin{equation*}
M_\om T_x = e^{2\pi i x\cdot \om} T_x M_\om\,.
\end{equation*}
Combining the last two properties, we have that 
\begin{equation*}
\widehat{\pi(x,\om) f} = e^{2\pi i x\cdot \om} \pi(\om,-x) \widehat{f}\quad \mbox{and} \quad  \check{\pi(x,\om) f} = e^{2\pi i x\cdot \om} \pi(-\om,x) \check{f}\,.
\end{equation*}

Given a non-zero function $g\in\Lt(\R^d)$, the {\it short-time Fourier transform} of $f\in\Lt(\R^d)$ with respect to the window $g$, is defined as
\begin{equation*}
\V_g f (x,\om) := \int_{\R^d} f(t) \overline{g(t-x)} e^{-2\pi i \om \cdot t}\, dt = \inner{f}{\pi(z)g}\,, \quad z=(x,\om)\in\Rtd\,. 
\end{equation*}
We define the {\it modulation spaces} as follows: fix a non-zero Schwartz function $g\in \mathcal{S}(\R^d)$, and let
\begin{equation*}
M^p(\R^d) := \{ f\in \mathcal{S}'(\R^d): \V_g f \in L^p(\R^d)\}\,, \quad 1\leq p \leq \infty
\end{equation*}
with the norm $\norm{f}_{M^p} = \norm{\V_g f}_p$. Different choices for $g$ give rise to equivalent norms on $M^p(\R^d)$. For $p=2$, we have $M^2(\R^d) = \Lt(\R^d)$. The space $M^1(\R^d)$, known as Feichtinger's algebra, can also be characterized as
\begin{equation*}
M^1(\R^d) := \{ f\in\Lt(\R^d): \norm{\V_f f}_1 < \infty \}\,.
\end{equation*}

\begin{proposition}\cite{gr01}
The space $M^1(\R^d)$ has the following properties:
\begin{itemize}
\item[i)] $M^1(\R^d)$ is a Banach algebra under pointwise multiplication.
\item[ii)] $M^1(\R^d)$ is a Banach algebra under convolution.
\item[iii)] $M^1(\R^d)$ is invariant under time-frequency shifts. 
\item[vi)] $M^1(\R^d)$ is invariant under Fourier transform.
\end{itemize}
\end{proposition}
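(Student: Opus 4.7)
My plan is to verify each of the four properties by exploiting the covariance properties of the short-time Fourier transform and the fact (mentioned in the statement) that the $M^1$-norm is independent of the choice of window up to equivalence, so we are free to pick the most convenient Schwartz window for each item.

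The invariance properties (iii) and (iv) are the easiest and I would handle them first. For time-frequency shifts, a direct computation using $M_\om T_x = e^{2\pi i x\cdot\om} T_x M_\om$ gives $\absbig{\V_g(\pi(z)f)(w)} = \absbig{\V_g f(w-z)}$, and translation invariance of Lebesgue measure on $\Rtd$ then yields $\norm{\pi(z)f}_{M^1} = \norm{f}_{M^1}$. For the Fourier transform, I would use the identity
\begin{equation*}
\V_{\widehat g}(\widehat f)(x,\om) = e^{-2\pi i x\cdot\om}\,\V_g f(-\om,x),
\end{equation*}
so that $\absbig{\V_{\widehat g}\widehat f}$ is merely a measure-preserving relabelling of $\absbig{\V_g f}$; since $\widehat g\in\Sp(\R^d)$ whenever $g\in\Sp(\R^d)$, the $L^1$-norms coincide.

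For the pointwise algebra property (i), the key is to expand $\V_g(fh)$ as a partial convolution in the frequency variable. Factoring the window as $g=g_1\cdot g_2$ with $g_1,g_2\in\Sp(\R^d)$, one obtains
\begin{equation*}
\V_g(fh)(x,\om) = \int_{\R^d} \V_{g_1}f(x,\eta)\,\V_{g_2}h(x,\om-\eta)\,d\eta,
\end{equation*}
and Young's inequality applied in the $\om$-variable, followed by integration in $x$, yields $\norm{\V_g(fh)}_1 \le \norm{\V_{g_1}f}_1\norm{\V_{g_2}h}_1$. Combined with norm-equivalence this gives both closure under pointwise multiplication and submultiplicativity.

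Finally, property (ii) follows by duality from (i) and (iv): if $f,h\in M^1(\R^d)$, then by (iv) $\widehat f,\widehat h\in M^1(\R^d)$, by (i) the pointwise product $\widehat f\cdot\widehat h$ again lies in $M^1(\R^d)$, and applying (iv) once more shows $f*h = (\widehat f\cdot\widehat h)^{\vee}\in M^1(\R^d)$ with the corresponding norm estimate. The main technical obstacle I anticipate is the derivation of the partial-convolution identity for $\V_g(fh)$ together with the interchange of integrals required to deploy Young's inequality; this is best handled first on the dense subspace $\Sp(\R^d)\subset M^1(\R^d)$ and then extended by continuity using the equivalence of $M^1$-norms for different Schwartz windows.
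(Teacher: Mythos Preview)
The paper does not prove this proposition at all; it is quoted from Gr\"ochenig's book \cite{gr01} and stated without proof. So there is nothing to compare against, and your proposal should be judged on its own merits.

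Your arguments for (iii) and (iv) are correct, and the reduction of (ii) to (i) and (iv) via $f*h=(\widehat f\cdot\widehat h)^{\vee}$ is fine. The gap is in (i). The partial-convolution identity
\[
\V_{g_1g_2}(fh)(x,\om)=\int_{\R^d}\V_{g_1}f(x,\eta)\,\V_{g_2}h(x,\om-\eta)\,d\eta
\]
is correct, and Young's inequality in $\om$ gives, for each fixed $x$,
\[
\int_{\R^d}\abs{\V_{g_1g_2}(fh)(x,\om)}\,d\om\ \le\ A(x)\,B(x),\qquad A(x)=\norm{\V_{g_1}f(x,\cdot)}_{L^1_\om},\ B(x)=\norm{\V_{g_2}h(x,\cdot)}_{L^1_\om}.
\]
But integrating in $x$ then yields $\int_{\R^d}A(x)B(x)\,dx$, which is \emph{not} bounded by $\norm{A}_{L^1_x}\norm{B}_{L^1_x}=\norm{\V_{g_1}f}_1\norm{\V_{g_2}h}_1$ in general (take $A=B=2\cdot\mathds{1}_{[0,1/2]}$). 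So the asserted inequality $\norm{\V_g(fh)}_1\le\norm{\V_{g_1}f}_1\norm{\V_{g_2}h}_1$ does not follow from the steps you describe.

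A repair is possible but requires an extra ingredient: bound $\int A\,B\le\norm{A}_{L^1_x}\norm{B}_{L^\infty_x}$ and observe that
\[
\norm{B}_{L^\infty_x}=\sup_x\norm{h\cdot\overline{T_xg_2}}_{\F L^1}\le\norm{\widehat h}_{L^1}\norm{\widehat{g_2}}_{L^1},
\]
together with the embedding $M^1\hookrightarrow\F L^1$. Alternatively, the proof in \cite{gr01} proceeds via the Wiener amalgam characterization $M^1=\mathbf{W}(\F L^1,\ell^1)$, where the discrete summation over cubes allows the pointwise estimate $\sum_k a_kb_k\le(\sup_k a_k)\sum_k b_k\le(\sum_k a_k)(\sum_k b_k)$ that fails in the continuous setting you use. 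Either way, your sketch for (i) needs one more idea beyond ``Young then integrate''.
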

$M^1(\R^d)$ contains the Schwartz space $\mathcal{S}(\R^d)$ and it is dense in $L^p(\R^d)$, $1\leq p < \infty$, therefore it is a very useful space in the time-frequency analysis. This will be the space of windows $g$. 

Another collection of function spaces that will be useful in our calculations are the amalgam spaces $\mathbf{W}(L^p,\ell^q)(\R^d)$, $1\leq p,q \leq \infty$. The amalgam space $\mathbf{W}(L^p,\ell^q)(\R^d)$ is the space of functions $f$ such that
\begin{equation*}
\norm{f}_{\mathbf{W}(L^p,\ell^q)(\R^d)} := \left (\sum_{k\in\Z^d} \norm{f\mathds{1}_k}_p^q \right)^{1/q}\,, 
\end{equation*}
where $\mathds{1}_k$ is the characteristic function of the cube $[0,1]^d +k$, $k\in\Z^d$. Different partitions of $\R^d$ give equivalent norms. The space $\mathbf{W}(L^{\infty},\lo)(\R^d)$, which is a subspace of $L^1(\R^d)$, is referred to as Wiener's algebra, and will be denoted by $\mathbf{W}(\R^d)$. It consists of all $f\in L^\infty(\R^d)$ such that
\begin{equation*}
\norm{f}_{\mathbf{W}(\R^d)} = \norm{f}_{\mathbf{W}(L^\infty,\lo)(\R^d)} := \sum_{k\in\Z^d} \norm{f\mathds{1}_k}_\infty\,.
\end{equation*}
Moreover, we have $\mathbf{W}(\R^d) \subseteq \mathbf{W}(L^{\infty},\lt)(\R^d)$. The following convolution relation will be often used:
\begin{equation*}
L^1 \ast \mathbf{W} \subseteq \mathbf{W}\,.
\end{equation*}
For a function $f\in M^1(\R^d)$ it follows that $f\in\mathbf{W}(L^{\infty},\lo)(\R^d)$ and $\widehat{f} \in \mathbf{W}(L^{\infty},\lo)(\R^d)$, \cite{gr01}.

Let $C_0(\R^d)$ be the space of all continuous functions that vanish at infinity. Then the closed subspace of $\mathbf{W}(\R^d)$ consisting of continuous functions is $\mathbf{W}(C_0,\lo)(\R^d)$. Continuity of elements in $\mathbf{W}(L^{\infty},\lo)(\R^d)$ allows for pointwise evaluations, and we have (see Proposition~$11.1.4$ in \cite{gr01}): if $F\in \mathbf{W}(C_0,\lo)(\R^d)$, then $F|\La \in \lo(\La)$, for $\La$ any discrete relatively separated set in $\R^d$ with the norm estimate
\begin{equation*}
\sum_{\la\in\La} \abs{F(\la)} \leq \text{rel}(\La) \norm{F}_{\mathbf{W}(\R^d)}\,.
\end{equation*}
A discrete subset $\La$ of $\R^d$ is relatively separated if
\begin{equation*}
\text{rel}(\La) := \text{sup} \{ \# \{\La \cap B(x,1)\}:x\in\R^d\} < \infty\,,
\end{equation*}
where $B(x,1)$ is a ball of radius $1$ in $\R^d$ centered at $x$.

There are two more time frequency representations of $f$ that we will be using. For $f,g\in \Lt(\R^d)$, the {\it cross-ambiguity function} of $f$ and $g$
\begin{equation*}
\A(f,g) (x,\om) = \int_{\R^d} f\Big(t + \frac{x}{2}\Big) \overline{g\Big (t-\frac{x}{2}\Big)} e^{-2 \pi i t \cdot \om}\, dt\,,
\end{equation*}
and the {\it cross-Wigner distribution} of $f$ and $g$
\begin{equation*}
\W (f,g) (x,\om) = \int_{\R^d} f\Big(x + \frac{t}{2}\Big) \overline{g\Big(x-\frac{t}{2}\Big)} e^{-2 \pi i t \cdot \om}\, dt\,.
\end{equation*}
The three time-frequency representations, $\V_gf$, $\A(f,g)$ and $\W(f,g)$ are related to each other. Before we state the relationships, we define the rotation $\U = \U_J$ of a function $F$ on $\Rtd$ as
\begin{equation*}
\U F (x,\om) = \U_J F (x,\om) = F(J(x,\om)^T) = F(\om,-x) \quad \mbox{with} \quad J=\left ( \begin{array}{cc} 0 & I \\ -I & 0 \end{array}\right )\,,
\end{equation*}
where $I$ is the $d\times d$ identity matrix. Then $\U^{-1} = \U_J^{-1} = \U_{-J}$. In the following two propositions we list the properties of the cross-ambiguity function and the cross-Wigner distribution that we will be using throughout the exposition. For the proofs we refer the reader to \cite{gr01}.
\begin{proposition}\label{prop:ambiguity}
For $f,g\in\Lt(\R^d)$ the cross-ambiguity function has the following properties.
\begin{itemize}
\item[a)] $\A(f,g)$ is uniformly continuous on $\Rtd$.
\item[b)] $\A(f,g)(x,\om) = e^{\pi i x \cdot \om} \V_g f(x,\om)$. 
\item[c)] $\A(f,g)^* = \A(g,f)$, where $\A(f,g)^*(x,\om) = \overline{\A(f,g)(-x,-\om)}$ is the involution.  
\item[d)] $\A(\pi(x,\om)f,g)(t,\zeta) = e^{\pi i t \cdot \om} e^{-\pi i x\cdot (\zeta-\om)} \A(f,g) (t-x,\zeta-\om)$ for $(x,\om)\in\Rtd$.
\item[e)] $\A(f,g) = \F^{-1} \U^{-1} \W(f,g)$.
\end{itemize}
Moreover, if $f,g\in M^1(\R^d)$, then $\A(f,g) \in \mathbf{W}(C_0,\lo)(\Rtd)$, and by $b)$, also $\V_g f \in \mathbf{W}(C_0,\lo)(\Rtd)$.
\end{proposition}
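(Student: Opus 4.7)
The plan is to prove (b) first as a direct change of variables, then derive (a) from it, and handle the remaining identities (c), (d), (e) by direct substitution from the definitions; the amalgam statement follows from (b) and a known embedding for $M^1$. For (b), substituting $s = t + x/2$ in the defining integral of $\A(f,g)(x,\om)$ turns the exponent into $-2\pi i(s-x/2)\cdot\om$, so the factor $e^{\pi i x\cdot\om}$ pulls out and what remains is $\V_g f(x,\om)$. Part (a) is then immediate because $\V_g f$ is continuous on $\Rtd$ and vanishes at infinity for any $f,g\in\Lt(\R^d)$ (a classical consequence of the strong continuity of $z\mapsto \pi(z)g$), hence is uniformly continuous; multiplication by the unimodular factor $e^{\pi i x\cdot\om}$ preserves continuity and decay at infinity, so $\A(f,g)\in C_0(\Rtd)$ and is in particular uniformly continuous.

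Parts (c) and (d) are elementary substitutions. For (c), conjugate inside the integral defining $\A(f,g)(-x,-\om)$; the integrand becomes $g(t+x/2)\overline{f(t-x/2)}\,e^{-2\pi i t\cdot\om}$, which is exactly the integrand of $\A(g,f)(x,\om)$. For (d), insert $\pi(x,\om)f(s+t/2) = e^{2\pi i \om\cdot(s+t/2)}\,f(s+t/2-x)$ into the ambiguity integral at the point $(t,\zeta)$, change variables $r = s - x/2$ to re-center the $f$-argument onto $r + (t-x)/2$, and collect the emerging phase factors; they combine to $e^{\pi i t\cdot\om}e^{-\pi i x\cdot(\zeta-\om)}$ multiplying the integral defining $\A(f,g)(t-x,\zeta-\om)$.

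For (e), I would note that both $\W(f,g)$ and $\A(f,g)$ are partial Fourier transforms in the second variable of the auxiliary kernel $h(x,t) = f(t+x/2)\overline{g(t-x/2)}$, evaluated at $(x,t)$ and $(t,x)$ respectively, so that $\W$ and $\A$ are related by a coordinate swap in the non-transformed slot. Applying $\U^{-1}$ to $\W(f,g)$ sends $(x,\om)\mapsto(-\om,x)$, and the subsequent inverse Fourier transform on $\Rtd$ collapses the $x$-integration against a Dirac kernel (after Fubini) and, via the substitution $\om\mapsto-\om$ in the remaining integral, reproduces exactly $\A(f,g)(u,v)$. The amalgam statement is then obtained from (b) combined with the fact (cf.\ \cite{gr01}) that $\V_g f\in \mathbf{W}(C_0,\lo)(\Rtd)$ whenever $f,g\in M^1(\R^d)$: since $|e^{\pi i x\cdot\om}|=1$, multiplication by this unimodular factor preserves each local sup-norm $\norm{\cdot\mathds{1}_k}_\infty$ and hence the amalgam norm, so $\A(f,g)$ lies in the same space. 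The main obstacle is the bookkeeping in part (e), where the Fourier transform on $\Rtd$, the symplectic rotation $\U$, and the partial Fourier transforms hidden in the definitions of $\A$ and $\W$ all need to be composed correctly.
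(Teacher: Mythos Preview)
The paper does not give its own proof of this proposition; it simply states ``For the proofs we refer the reader to \cite{gr01}.'' Your proposal is therefore not being compared against an argument in the paper but against the standard textbook verification, and it is essentially that standard verification carried out correctly.

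Two minor remarks. In (a), you attribute both continuity \emph{and} vanishing at infinity of $\V_g f$ to the strong continuity of $z\mapsto\pi(z)g$; strong continuity gives only (uniform) continuity, while $\V_g f\in C_0(\Rtd)$ for general $f,g\in\Lt$ requires an approximation-by-Schwartz-functions argument (uniform limits of $C_0$ functions are in $C_0$). Once $\V_g f\in C_0$, your deduction that $\A(f,g)\in C_0$ and hence is uniformly continuous is fine. In (e), the ``Dirac kernel'' description is informal; the clean way to phrase it is that with $H(a,b)=f(b+a/2)\overline{g(b-a/2)}$ one has $\W(f,g)(x,\om)=\F_1 H(\om,x)$ and $\A(f,g)(x,\om)=\F_2 H(x,\om)$, so $\F^{-1}\U^{-1}\W(f,g)=\F_1^{-1}\F_2^{-1}\big[(\F_1 H)(x,-\om)\big]=\F_2 H=\A(f,g)$ after the obvious change of variable. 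With these small clarifications your argument is complete.
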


\begin{proposition}\label{prop:wigner}
For $f,g\in\Lt(\R^d)$ the cross-Wigner distribution has the following properties.
\begin{itemize}
\item[a)] $\W(f,g)$ is uniformly continuous on $\Rtd$.
\item[b)] $\W(f,g) = \overline{\W(g,f)}$.
\item[c)] $\W(\widehat{f},\widehat{g}) = \U^{-1} W(f,g)$.
\item[d)]  $\W(f,g) = \F \U \A(f,g)$.
\item[e)] For $(x,\om)\in \Rtd$ and $(x',\om')\in\Rtd$, we have
\begin{align*}
\W(T_x M_\om f, &T_{x'} M_{\om'}g)(t,\zeta) \\
&= e^{-\pi i (x+x')\cdot (\om-\om')} e^{2\pi i t\cdot (\om-\om')} e^{-2\pi i \zeta\cdot (x-x')} \W(f,g)\Big (t - \frac{x+x'}{2},\zeta - \frac{\om+\om'}{2}\Big)
\end{align*}
\item[f)] Moyal's formula: for $f_1,g_1 \in\Lt(\R^d)$,
\begin{equation*}
\inner{W(f,g)}{W(f_1,g_1)}_{\Lt(\Rtd)} = \inner{f}{f_1} \overline{\inner{g}{g_1}}\,.
\end{equation*}
\end{itemize}
Moreover, if $f,g\in M^1(\R^d)$, then $\W(f,g) \in \mathbf{W}(C_0,\lo)(\Rtd)$.
\end{proposition}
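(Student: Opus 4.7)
The plan is to treat part (d) as the pivot and derive most of the remaining statements from it together with Proposition~\ref{prop:ambiguity} and the unitarity of $\F$ and $\U$ on $\Lt(\Rtd)$. For (d), starting from
\begin{equation*}
\A(f,g)(s,\sigma) = \int_{\R^d} f(t+s/2)\,\overline{g(t-s/2)}\, e^{-2\pi i t\cdot\sigma}\,dt,
\end{equation*}
I would apply $\U$ (which sends $(s,\sigma)\mapsto(\sigma,-s)$) and then the Fourier transform on $\Rtd$; the substitution $t \mapsto x + t/2$ inside, followed by evaluation of the Fourier integral in one coordinate, reproduces the defining integral of $\W(f,g)(x,\om)$.

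For (a) I would combine $|\W(f,g)(x,\om)| \leq \|f\|_2\|g\|_2$ (Cauchy--Schwarz in the defining integral) with smoothness of $\W$ on Schwartz pairs; uniform continuity for general $f,g \in \Lt(\R^d)$ then follows by density, since a uniform limit of uniformly continuous bounded functions is uniformly continuous. Part (b) is a direct $t\mapsto -t$ substitution together with conjugation of the exponential. Part (e) is a book-keeping calculation: substitute the definitions of $\pi(x,\om)f$ and $\pi(x',\om')g$ into the Wigner integral and translate the dummy variable by $(x+x')/2$; all the phase corrections collect into the stated cocycle factor involving $(x+x')\cdot(\om-\om')$, $(\om+\om')/2$ and $(x+x')/2$. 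Part (c) can either be proven by a direct Fourier--Plancherel computation in both coordinates, or deduced from (d) and Proposition~\ref{prop:ambiguity}(e) once one checks the intertwining of $\F$ with $\U$ (up to the usual sign) on $\Lt(\Rtd)$.

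For Moyal's formula (f) the cleanest route is to invoke (d) together with Plancherel on $\Lt(\Rtd)$ and the measure-preservation of $\U$, reducing the statement to an orthogonality relation for cross-ambiguity functions; by Proposition~\ref{prop:ambiguity}(b) one has $\A(f,g) = e^{\pi i x\cdot\om}\V_g f$, and the chirp cancels in the inner product, so Moyal for $\W$ follows from the classical Plancherel identity $\inner{\V_g f}{\V_{g_1}f_1} = \inner{f}{f_1}\overline{\inner{g}{g_1}}$ for the short-time Fourier transform.

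The main obstacle, I expect, is the closing assertion $\W(f,g) \in \mathbf{W}(C_0,\lo)(\Rtd)$ when $f,g \in M^1(\R^d)$: Proposition~\ref{prop:ambiguity} places $\A(f,g)$ in that amalgam space, but $\mathbf{W}(C_0,\lo)$ is not obviously preserved under the Fourier transform that appears in (d). My remedy is to upgrade the input: for $f,g \in M^1(\R^d)$ one in fact has $\V_g f \in M^1(\Rtd)$, hence $\A(f,g) \in M^1(\Rtd)$ (the chirp $e^{\pi i x\cdot\om}$ is a metaplectic multiplier and preserves $M^1$); by Fourier- and linear-change-of-variables-invariance of $M^1(\Rtd)$ we then conclude $\W(f,g) = \F\U\A(f,g) \in M^1(\Rtd) \subset \mathbf{W}(C_0,\lo)(\Rtd)$, as required.
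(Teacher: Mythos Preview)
The paper does not supply its own proof of this proposition: immediately before Propositions~\ref{prop:ambiguity} and~\ref{prop:wigner} it states ``For the proofs we refer the reader to \cite{gr01}'' and gives no further argument. So there is nothing in the paper to compare your attempt against.

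That said, your outline is sound and follows the standard textbook route (essentially the one in \cite{gr01}). A couple of minor remarks. First, your derivation of (d) implicitly integrates $e^{2\pi i x\cdot(t-x')}$ over $x$ to a delta; this is fine formally but should be phrased via Fourier inversion on a suitable dense subspace and then extended by continuity. Second, for the amalgam-space claim you do not actually need to argue that the chirp $e^{\pi i x\cdot\om}$ preserves $M^1$: the paper already records as Lemma~\ref{lem:AinS} that $\A(f,g)\in M^1(\Rtd)$ whenever $f,g\in M^1(\R^d)$, so you may simply cite that and then use the Fourier- and $GL$-invariance of $M^1$ together with (d) to get $\W(f,g)\in M^1(\Rtd)\subset \mathbf{W}(C_0,\ell^1)(\Rtd)$, exactly as you propose.
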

We note here that Moyal's formula also holds for the cross-ambiguity function and the short-time Fourier transform. We also mention the following tensor-product properties: if $Z=(z,\tilde{z})$ with $z\in \R^{2n}$, $\tilde{z}\in\R^{2d-2n}$ and $\psi_1,\phi_1\in M^1(\R^n)$, $\psi_2,\phi_2\in M^1(\R^{d-n})$, then
\begin{equation*}
\A(\psi_1\otimes \psi_2,\phi_1\otimes\phi_2)(Z) = \big (\A(\psi_1,\phi_1) \otimes \A(\psi_2,\phi_2)\big)(z,\tilde{z}) = \A(\psi_1,\phi_1)(z)   \A(\psi_2,\phi_2)(\tilde{z})\,.
\end{equation*}
Similarly for the cross-Wigner distribution and short-time Fourier transform.

Before we turn to Gabor systems, we need one more result. It is originally stated for the short-time Fourier transform, but we state it here for the cross-ambiguity function. The proof is the same.
\begin{lemma}\cite{Fei81}\label{lem:AinS}
Let $f,g\in M^1(\R^d)$. Then $\A(f,g) \in M^1(\Rtd)$.
\end{lemma}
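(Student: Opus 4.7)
My plan is to prove the equivalent statement $\W(f,g)\in M^1(\R^{2d})$ whenever $f,g\in M^1(\R^d)$. The lemma as stated then follows from Proposition~\ref{prop:ambiguity}(e), which gives $\A(f,g)=\F^{-1}\U^{-1}\W(f,g)$, because $M^1(\R^{2d})$ is invariant under the Fourier transform and under the linear rotation $\U$.

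To verify the Wigner version I would use as STFT window $\Phi:=\W(\phi,\psi)$ with non-zero $\phi,\psi\in M^1(\R^d)$; taking $\phi=\psi$ a Gaussian makes $\Phi$ a Gaussian on $\R^{2d}$, which is manifestly in $M^1(\R^{2d})$. The heart of the argument is the factorization identity
\begin{equation*}
\V_{\W(\phi,\psi)}\W(f,g)(U,\Xi)
\;=\;c(U,\Xi)\;\V_\phi f\!\left(U-\tfrac{J\Xi}{2}\right)\,\overline{\V_\psi g\!\left(U+\tfrac{J\Xi}{2}\right)},
\end{equation*}
where $|c(U,\Xi)|\equiv 1$ and $(U,\Xi)\in\R^{4d}$. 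I would derive it in two stages. First, Proposition~\ref{prop:wigner}(e), applied with $z_1=U-J\Xi/2$ and $z_2=U+J\Xi/2$, lets one identify $\pi(U,\Xi)\W(\phi,\psi)$ with a unimodular factor times $\W\bigl(\pi(U-J\Xi/2)\phi,\pi(U+J\Xi/2)\psi\bigr)$. Second, Moyal's formula, Proposition~\ref{prop:wigner}(f), collapses $\langle \W(f,g),\W(\cdot,\cdot)\rangle_{\Lt(\R^{2d})}$ into the product of two inner products on $\Lt(\R^d)$, which are exactly the two STFTs appearing on the right-hand side.

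Once the factorization is in hand I would pass to absolute values (the factor $c(U,\Xi)$ disappears harmlessly) and integrate. The map $(U,\Xi)\mapsto(V,W):=(U-J\Xi/2,\,U+J\Xi/2)$ is a linear automorphism of $\R^{4d}$ with constant Jacobian, so by Fubini
\begin{equation*}
\norm{\V_{\W(\phi,\psi)}\W(f,g)}_{\Lo(\R^{4d})}
\;=\;c_0\,\norm{\V_\phi f}_{\Lo(\R^{2d})}\,\norm{\V_\psi g}_{\Lo(\R^{2d})}\;<\;\infty,
\end{equation*}
since the last two factors are finite by the hypothesis $f,g\in M^1(\R^d)$. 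This gives $\W(f,g)\in M^1(\R^{2d})$, and the ambiguity statement follows.

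The main obstacle I anticipate is the phase-factor bookkeeping in the first stage: Proposition~\ref{prop:wigner}(e) produces three distinct exponential prefactors, and they only combine into a single unimodular $c(U,\Xi)$ if the arguments of $\pi$ are chosen in exactly the right way — in particular the split $U\pm J\Xi/2$ is forced by matching the modulation $e^{2\pi i u\cdot\Xi}$ with the exponentials $e^{2\pi i t(\om_1-\om_2)}e^{-2\pi i\zeta(x_1-x_2)}$ coming from the Wigner covariance. Everything else — Moyal, the change of variables, and the transfer from Wigner to ambiguity — is routine.
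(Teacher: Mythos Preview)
The paper does not prove this lemma at all: it is simply quoted from \cite{Fei81} without argument. So there is no ``paper's own proof'' to compare against.

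Your proposal is correct and is, in fact, one of the standard ways to establish this result. The factorization
\[
\bigl|\V_{\W(\phi,\psi)}\W(f,g)(U,\Xi)\bigr|
=\bigl|\V_\phi f(U-\tfrac{J\Xi}{2})\bigr|\,\bigl|\V_\psi g(U+\tfrac{J\Xi}{2})\bigr|
\]
follows exactly as you describe: Proposition~\ref{prop:wigner}(e) identifies $\pi(U,\Xi)\W(\phi,\psi)$ with a unimodular phase times $\W\bigl(\pi(U-\tfrac{J\Xi}{2})\phi,\pi(U+\tfrac{J\Xi}{2})\psi\bigr)$ (the split $U\pm\tfrac{J\Xi}{2}$ is forced, as you note, by matching the translation and modulation parameters), and Moyal's formula then splits the inner product. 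The linear change of variables $(U,\Xi)\mapsto(U-\tfrac{J\Xi}{2},\,U+\tfrac{J\Xi}{2})$ on $\R^{4d}$ has Jacobian $1$, so the $L^1$-norm factors. Choosing $\phi=\psi$ Gaussian makes $\W(\phi,\psi)$ a Gaussian on $\R^{2d}$, which is an admissible $M^1$-window.

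One small remark: the transfer from $\W(f,g)\in M^1(\R^{2d})$ to $\A(f,g)\in M^1(\R^{2d})$ via $\A(f,g)=\F^{-1}\U^{-1}\W(f,g)$ uses invariance of $M^1$ under the coordinate change $\U$. The paper only records Fourier and time-frequency shift invariance explicitly, but invariance of $M^1$ under $GL(2d,\R)$ is standard (see e.g.\ \cite{gr01}); alternatively you can bypass this by running the same argument directly on $\A(f,g)$ using Proposition~\ref{prop:ambiguity}(b) and the analogous factorization for $\V_{\V_\psi\phi}(\V_g f)$.
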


The main object of our study here are Gabor systems. Let $\La$ be a relatively separated subset of $\Rtd$. A {\it Gabor system} is a collection of time-frequency shifts of one or more window functions $g_i\in M^1(\R^d)$, $i=1,\ldots,M$, with respect to $\La \subset \Rtd$, and it is denoted by
\begin{equation*}
\G(g_1,\ldots,g_M;\La) = \{ \pi(\la)g_i:  i=1,\ldots,M,\, \la\in\La\}\,.
\end{equation*}
The analysis operator and the synthesis operator for a Gabor system $\G(g_1,\ldots,g_M;\La)$ are defined as
\begin{align*}
C_{g,\La}f &:= (\inner{f}{\pi(\la)g_i})_{\la\in\La; i=1,\ldots,M}\\
D_{g,\La} c &:= \sum_{i=1}^M \sum_{\la\in\La} c_\la \pi(\la)g_i
\end{align*}
are bounded between $M^p(\R^d)$ and $\ell^p(\La)$ spaces, with estimates
\begin{align*}
\norm{C_{g,\La}f}_{\ell^p} &\leq  \text{rel}(\La) \,\norm{f}_{M^p} \, \max_{i=1,\ldots,M} \norm{g_i}_{M^1} \\
\norm{D_{g,\La} c}_{M^p} &\leq   \text{rel}(\La)\, \norm{c}_{\ell^p}\, \max_{i=1,\ldots,M} \norm{g_i}_{M^1} 
\end{align*}
A Gabor system $\G(g_1,\ldots,g_M;\La)$ with $g_i\in M^1(\R^d)$, $i=1,\ldots,M$, will be called an {\it $M^p$-frame} if $C_{g,\La}$ is bounded below on $M^p(\R^d)$. This is equivalent to having constants $A_g,B_g>0$ so that for all $f\in M^p(\R^d)$
\begin{equation*}
\sqrt{A_g} \norm{f}_{M^p} \leq \norm{S_g^\La f}_{M^p} \leq \sqrt{B_g} \norm{f}_{M^p}\,,
\end{equation*}
where $S_g^{\La}$ is a {\it frame operator} given by
\begin{equation}\label{eq:frame operator}
S_g^\La f = \sum_{i=1}^M \sum_{\la\in\La} \inner{f}{\pi(\la)g_i} \pi(\la) g_i\,.
\end{equation}
In this case the frame operator is invertible on $M^p(\R^d)$. Theorem~$3.2$ in \cite{GOR15} states that when each $g_i\in M^1(\R^d)$, $i=1,\ldots,M$, then $\G(g_1,\ldots,g_M;\La)$ is an $M^p$-frame for some $p\in [1,\infty]$, if and only if it is an $M^p$-frame for all $p$. The constants $A_g$ and $B_g$ are called lower and upper frame bounds, respectively. If $A_g=B_g$ then the frame is called a {\it tight Gabor frame}, and if $A_g=B_g=1$, a {\it normalized tight Gabor frame}. 

When $\G(g_1,\ldots,g_M;\La)$ is an $M^p$-frame for some $p\in [1,\infty]$, then we have a frame decomposition
\begin{equation}\label{eq:frame decomposition}
f = \sum_{i=1}^M \sum_{\la\in\La}  \inner{f}{\big (S_g^{\La} \big)^{-1} \pi(\la) g_i} \pi(\la) g_i\,, \quad \mbox{for all $f\in M^p(\R^d)$.}
\end{equation}
The sequence $\{\big(S_g^\La \big )^{-1} \pi(\la) g_i: i=1,\ldots, M,\, \la\in\La\}$ is also a frame for $M^p(\R^d)$, called the {\it canonical dual frame} of $\G(g_1,\ldots,g_M;\La)$, and has upper and lower frame bounds $B_g^{-1}$ and $A_g^{-1}$, respectively. If the frame is tight, then $\big (S_g^\La \big )^{-1}= A_g^{-1} I$, where $I$ is the identity operator, and the frame decomposition becomes
\begin{equation*}
f = A_g^{-1}\,\sum_{i=1}^M \sum_{\la\in\La} \inner{f}{\pi(\la) g_i} \pi(\la) g_i\,,\quad \mbox{for all $f\in M^p(\R^d)$}\,.
\end{equation*}
In order to use the representation \eqref{eq:frame decomposition} in practice, we need to be able to calculate $\big (S_g^\La \big )^{-1}$. While the existence of $\big (S_g^\La \big )^{-1}$ is guaranteed by the frame condition, it is usually tedious to find this operator explicitly. Moreover, if $\La$ is not a lattice in $\R^d$, then the frame operator of $S_g^{\La}$ does not commute with time-frequency shifts, that is $\pi(\be) S_g^\La \neq S_g^\La \pi(\be)$ for $\be\in\La$. Indeed, let $\be\in\La$ and $f\in \Lt(\R^d)$, then
\begin{equation*}
S_g^{\La} \pi(\be) f = \sum_{i=1}^M \sum_{\la\in\La} \inner{\pi(\be)f}{\pi(\la) g_i} \pi(\la) g_i= \sum_{i=1}^M \sum_{\la\in\La} e^{-2\pi i \be_1\cdot (\la_2-\be_2)} \inner{f}{\pi(\la-\be) g_i} \pi(\la) g_i\,,
\end{equation*}
where $\be=(\be_1,\be_2)$ and $\la=(\la_1,\la_2)$. On the other hand,
\begin{align*}
\pi(\be) S_g^{\La} f &= \sum_{i=1}^M \sum_{\la\in\La} \inner{f}{\pi(\la) g_i} \pi(\be) \pi(\la) g_i= \sum_{i=1}^M \sum_{\la\in\La} e^{-2\pi i \be_1\cdot \la_2} \inner{f}{\pi(\la) g_i} \pi(\la+\be) g_i\\
&= \sum_{i=1}^M \sum_{\la\in\La+\be} e^{-2\pi i \be_1\cdot (\la_2-\be_2)} \inner{f}{\pi(\la-\be) g_i} \pi(\la) g_i \,,
\end{align*}
and the two expressions are not equal since $\La+\be \neq \La$. Therefore, the canonical dual frame $\{ \big (S_g^{\La}\big)^{-1} \pi(\la) g_i:i=1,\ldots,M,\, \la\in\La\}$ does not have the same structure as $\G(g_1,\ldots,g_M;\La)$, that is it is not a Gabor frame, and, in order to compute the canonical dual frame we would have to apply $\big (S_g^{\La} \big)^{-1}$ to $\pi(\la) g_i$, for all $i=1,\ldots,M$ and all $\la\in\La$. Hence, we search for a pair of dual frames, rather than just one frame. Let $\G(g_1,\ldots,g_M;\La)$ and $\G(h_1,\ldots,h_M;\La)$ be Gabor systems, then we can define a mixed frame operator
\begin{equation*}
S_{g,h}^{\La} f = \sum_{i=1}^M \sum_{\la\in\La} \inner{f}{\pi(\la) g_i} \pi(\la) h_i
\end{equation*}
which is a bounded linear operator on $ M^p(\R^d)$. If $S_{g,h}^{\La} f = f$ for every $f\in M^p(\R^d)$, then we call $\G(h_1,\ldots,h_M;\La)$ a {\it generalized dual Gabor frame} of $\G(g_1,\ldots,g_M;\La)$.


\section{Gabor frames for lattices: revised}\label{sec:gabor frames lattice}

Before we turn our attention to Gabor frames for model sets, we revisit here known results for regular Gabor frames, that is where time-frequency shifts come from a lattice. We present a different approach then the one presented in the literature, by constructing a bracket product for the time-frequency plane. Alternative, but in some sense similar approaches, were also developed in \cite{FL06} and more recently in \cite{JaLe16}.

Throughout this section $\La$ will be a lattice, that is a discrete subgroup of $\Rtd$. A lattice can be represented by an invertible matrix $A\in GL(2d,\R)$ and is then given by $\La = A \Ztd$. We define the volume of a lattice $\La=A\Ztd$ by
$\text{vol}(\La) = \abs{\det (A)}$. The density of a lattice is given by the reciprocal of the volume, that is $D(\La) = \text{vol}(\La)^{-1}$. A dual lattice is defined as $\La^\ast=A^{-T}\Ztd$.

A tool that is heavily utilized in time-frequency analysis is the Poisson summation formula for functions on $\R^d$. However, we will use here Poisson summation formula for functions on $\Rtd$.

\begin{theorem}[Poisson Summation Formula for Lattices]\label{thm:poisson}
Let $\La$ be a lattice in $\Rtd$. Then, for every $F\in M^1(\Rtd)$ we have
\begin{equation*}
\sum_{\la\in\La} F(\la) e^{-2\pi i \la \cdot z} = \text{vol}(\La)^{-1} \sum_{\la^*\in \La^*} \F F(z-\la^*)\,, 
\end{equation*}
where $\La^\ast$ is a dual lattice. The identity holds pointwise for all $z\in\Rtd$, and both sums converge uniformly and absolutely  $z\in\Rtd$.
\end{theorem}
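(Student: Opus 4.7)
The plan is to recognize the two sides of the claimed identity as two representations of the same $\La^*$-periodic continuous function on $\Rtd$: the right-hand side as the $\La^*$-periodization of $\F F$, and the left-hand side (up to the factor $\text{vol}(\La)$) as its Fourier series on the torus $\Rtd/\La^*$. Since $\La$ is precisely the annihilator of $\La^*$, the characters $\{e^{2\pi i\la\cdot z}\}_{\la\in\La}$ form an orthogonal basis of $\Lt(\Rtd/\La^*)$, so the whole argument reduces to matching Fourier coefficients and then upgrading to pointwise equality.

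First I would dispatch convergence. Because $F\in M^1(\Rtd)$, Fourier invariance of $M^1$ gives $\F F\in M^1(\Rtd)$, and the embedding $M^1\subset\mathbf{W}(C_0,\lo)(\Rtd)$ recalled in Section~\ref{sec:not} places both $F$ and $\F F$ in the Wiener algebra. The sampling estimate for $\mathbf{W}(C_0,\lo)$ applied to the relatively separated sets $\La$ and $z-\La^*$ (whose relative separation is independent of $z$) shows that the left-hand sum and the periodization
\begin{equation*}
\Phi(z):=\sum_{\la^*\in\La^*}\F F(z-\la^*)
\end{equation*}
both converge absolutely and uniformly in $z\in\Rtd$, and in particular $\Phi$ is a continuous $\La^*$-periodic function.

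Next I would compute the Fourier coefficients of $\Phi$ on $\Rtd/\La^*$. Choosing any measurable fundamental domain $D^*$ for $\La^*$ of measure $\text{vol}(\La^*)=\text{vol}(\La)^{-1}$, unfolding the periodization and using the annihilator relation $e^{-2\pi i\la\cdot\la^*}=1$ for $\la\in\La$, $\la^*\in\La^*$ collapses the coefficient integral to a single integral over $\Rtd$:
\begin{equation*}
\frac{1}{\text{vol}(\La^*)}\int_{D^*}\Phi(z)\,e^{-2\pi i\la\cdot z}\,dz=\text{vol}(\La)\int_{\Rtd}\F F(z)\,e^{-2\pi i\la\cdot z}\,dz=\text{vol}(\La)\,F(-\la),
\end{equation*}
where the last equality is Fourier inversion, legitimate because $F,\F F\in\Lo(\Rtd)$ via $M^1\subset\Lo$. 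Reindexing $\la\mapsto-\la$ over the group $\La$ then identifies the formal Fourier series of $\Phi$ with $\text{vol}(\La)\sum_{\la\in\La}F(\la)\,e^{-2\pi i\la\cdot z}$.

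The final step is to upgrade this formal Fourier expansion to a pointwise identity. The coefficient sequence $\{\text{vol}(\La)F(-\la)\}_{\la\in\La}$ belongs to $\lo(\La)$ by the same $\mathbf{W}(C_0,\lo)$ sampling estimate, so its Fourier series converges absolutely and uniformly to a continuous function on $\Rtd/\La^*$, which must coincide with the continuous $\Phi$. Dividing by $\text{vol}(\La)$ produces the claimed identity at every $z\in\Rtd$, with the advertised absolute and uniform convergence on both sides. The one place I would expect subtlety is precisely the passage from equality of Fourier coefficients to pointwise equality, and this is exactly where the choice of the space $M^1$ pays off: it simultaneously furnishes continuity of $\Phi$ and $\lo$-summability of the Fourier coefficients, bypassing any delicate convergence analysis.
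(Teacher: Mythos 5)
Your argument is correct, and it is worth noting that the paper itself gives no proof of Theorem~\ref{thm:poisson}: it is invoked as the known Poisson summation formula for Feichtinger's algebra (as in the standard literature, e.g.\ \cite{gr01}), so there is no internal proof to compare against. What you supply is precisely the standard argument in the form best suited to $M^1$: the periodization $\Phi(z)=\sum_{\la^*\in\La^*}\F F(z-\la^*)$ is continuous and $\La^*$-periodic because $\F F\in M^1(\Rtd)\subset \mathbf{W}(C_0,\lo)(\Rtd)$ and the relative separation of $z-\La^*$ is independent of $z$; unfolding over a fundamental domain and using $\la\cdot\la^*\in\Z$ together with Fourier inversion (legitimate since $F,\F F\in\Lo$ and $F$ is continuous) identifies its Fourier coefficients with $\text{vol}(\La)F(-\la)$; and the $\lo(\La)$-summability of the samples of $F$, again from the $\mathbf{W}(C_0,\lo)$ sampling estimate, upgrades equality of Fourier coefficients to a pointwise identity between two continuous functions, with absolute and uniform convergence on both sides. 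The normalizations check out ($\text{vol}(\La^*)=\text{vol}(\La)^{-1}$, and the reindexing $\la\mapsto-\la$ produces the stated form). The only place where you are slightly terse is the claim of \emph{uniform} convergence of the periodization: the sampling estimate directly gives a uniform bound on the absolute sums, and uniform convergence of the tails follows from the same $\mathbf{W}$-norm estimate applied on a compact fundamental domain plus periodicity; this is routine and does not affect the validity of the proof.
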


Let $f,g,h\in M^1(\R^d)$. Then, by Lemma~\ref{lem:AinS} $\A(f,g)$ and $\W(f,g)$ belong to $M^1(\Rtd)$, and by Poisson summation formula,
\begin{equation}\label{eq:A-W}
\sum_{\la\in\La} \A(f,g)(\la) e^{-2\pi i \la \cdot z} = \text{vol}(\La)^{-1} \sum_{\la^*\in \La^*} \W(\widehat{f},\widehat{g})(z-\la^*)\,,
\end{equation}
where we used the relation $\A(f,g) = \F^{-1} \W(\widehat{f},\widehat{g})$ derived from Proposition~\ref{prop:ambiguity} and Proposition~\ref{prop:wigner}. Assume that $\La= A\Ztd$, then we can write \eqref{eq:A-W} more explicitly as
\begin{equation}
\sum_{n\in\Ztd} \A(f,g)(An) e^{-2\pi i An \cdot z} = (\det A)^{-1} \sum_{n\in \Ztd} \W(\widehat{f},\widehat{g})(z-A^{-T}n)\,.
\end{equation} 

We are now in the position to define a main ingredient in our approach, the bracket product. Let $f\in \Lt(\R^d)$ and $g\in M^1(\R^d)$. For a fixed $z=(x,\om)\in\Rtd$, the {\it generalized $\La-$bracket product} of $\pi(z)f$ and $g$ is defined as
\begin{equation}\label{eq:bracket La}
\big [\widehat{\pi(z)f},\widehat{g}\big ]_\La(\tilde{z}) = \text{vol}(\La)^{-1} \sum_{\la^*\in\La^*} M_{-z} \W(\widehat{f},\widehat{g})(\tilde{z}-\la^*)\,,
\end{equation}
where $M_z$ denotes the $\Rtd$ modulation by $z$. It follows from Monotone Convergence Theorem and the fact that $\W(\widehat{f},\widehat{g}) \in \Lo(\Rtd)$, that the series  \eqref{eq:bracket La} converges absolutely to a function $\Lo(\T_{\La^*})$, $\T_{\La^*} = \Rtd/\La^*$. When $\La$ is represented by a matrix $A$, we have $\T_{\La^*} = A^{-1}\T$ where $\T$ is the torus in $\Rtd$. Since $\big [\widehat{\pi(z)f},\widehat{g}\big ]_\La \in \Lo(\T_{\La^*})$, we can compute the Fourier coefficients
\begin{align*}
\int_{\T_{\La^*}} \big [\widehat{\pi(z)f},\widehat{g}\big ]_\La(\tilde{z}) e^{2\pi i \tilde{z} \cdot \la}\,d\tilde{z} &=  \text{vol}(\La)^{-1} \int_{\T_{\La^*}} \sum_{\la^*\in\La^*} M_{-z} \W(\widehat{f},\widehat{g})(\tilde{z}-\la^*) e^{2\pi i \tilde{z} \cdot \la}\,d\tilde{z} \\
&= \int_{\Rtd} M_{-z} \W(\widehat{f},\widehat{g})(\tilde{z}) e^{2\pi i \tilde{z} \cdot \la}\,d\tilde{z} = \F^{-1} M_{-z} \W(\widehat{f},\widehat{g})(\la) \\
&= T_z \F^{-1} \W(\widehat{f},\widehat{g}) (\la) = T_z \A(f,g)(\la)\\
&= \A(f,g)(\la-z)\,.
\end{align*}
Then the application of the Plancherel theorem for Fourier series, gives us the following proposition.
\begin{proposition}\label{prop:Planch}
Let $\La$ be  a lattice in $\Rtd$. Fix $z\in \Rtd$. Then for all $g,h\in M^1(\Rtd)$ and $f_1,f_2\in \Lt(\Rtd)$, we have
\begin{equation*}
\sum_{\la \in\La} \A(f_1,g)(\la-z) \overline{\A(f_2,h)(\la-z)} = \int_{\T_{\La^*}} \big [\widehat{\pi(z)f_1},\widehat{g}\big ]_\La(\tilde{z}) \, \overline{\big [\widehat{\pi(z)f_2},\widehat{h}\big ]_\La(\tilde{z})} \, d\tilde{z}\,,
\end{equation*}
with the absolute convergence of the integral.
\end{proposition}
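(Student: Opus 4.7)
The plan is to recognize the identity as Parseval's relation on the compact abelian group $\T_{\La^*}$, whose dual group is $\La$. The Fourier coefficient computation carried out in the paragraph preceding the statement already identifies $\big(\A(f_i,g)(\la-z)\big)_{\la\in\La}$ (respectively with $h$) as the Fourier coefficients of the bracket product on $\T_{\La^*}$, provided the relevant interchange of sum and integral is legal. Once both bracket products are shown to lie in $\Lt(\T_{\La^*})$, Parseval delivers the claimed equality, and Cauchy--Schwarz in $\lt(\La)$ accounts for the absolute convergence of the sum and of the integral.

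First, I would handle the $M^1$ case. For $f_1,f_2\in M^1(\R^d)$, Lemma~\ref{lem:AinS} together with Proposition~\ref{prop:wigner} gives $\W(\widehat{f_i},\widehat{g}), \W(\widehat{f_i},\widehat{h})\in \mathbf{W}(C_0,\lo)(\Rtd)$, so their $\La^*$-periodizations converge absolutely and uniformly to continuous functions on $\T_{\La^*}$, and the Fubini-based Fourier coefficient calculation of the preceding paragraph is unconditional. By Proposition~\ref{prop:ambiguity} and the sampling estimate in $\mathbf{W}(C_0,\lo)$ for relatively separated sets, the Fourier coefficients belong to $\lo(\La)\subset\lt(\La)$, so both bracket products are continuous on $\T_{\La^*}$, and in particular lie in $\Lt(\T_{\La^*})$. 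Parseval then closes this case.

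Second, I would extend to $f_1,f_2\in\Lt(\R^d)$ by density. The Bessel inequality for $\G(g;\La)$ with $g\in M^1$, recorded in Section~\ref{sec:not} as the boundedness of $C_{g,\La}$ on $\Lt(\R^d)$, shows that $f\mapsto \big(\A(f,g)(\la-z)\big)_{\la\in\La}$ is a bounded map $\Lt(\R^d)\to\lt(\La)$, uniformly in $z$, so the left-hand side is jointly continuous in $(f_1,f_2)\in\Lt\times\Lt$. For the right-hand side, Plancherel on $\T_{\La^*}$ identifies $\Lt(\T_{\La^*})$ isometrically with $\lt(\La)$ through Fourier coefficients; this lets me extend $f\mapsto\big[\widehat{\pi(z)f},\widehat{g}\big]_\La$ by continuity to a bounded map $\Lt(\R^d)\to\Lt(\T_{\La^*})$ whose Fourier coefficients remain $\A(f,g)(\la-z)$, and likewise for $h$. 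The $\Lt$-inner product on the right is therefore continuous as well. Choosing $M^1$-approximants $f_i^{(n)}\to f_i$ in $\Lt$ and passing to the limit in the identity established in step one finishes the proof.

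The main obstacle is the regularity gap between $M^1$ and $\Lt$: when $f$ is only in $\Lt$, the defining series for $\big[\widehat{\pi(z)f},\widehat{g}\big]_\La$ is no longer a priori absolutely convergent, so the Fubini manipulation used to read off its Fourier coefficients is not directly available. The density argument in step two sidesteps this by performing the interchange only on the subclass $M^1$ where everything is absolutely convergent, and then transferring the identity by joint $\Lt$-continuity of both sides.
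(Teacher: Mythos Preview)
Your proposal is correct and follows the same route as the paper: identify the Fourier coefficients of the bracket product on $\T_{\La^*}$ as $\A(f_i,g)(\la-z)$ (respectively $h$) and then invoke Plancherel for Fourier series, which is exactly the one-sentence argument the paper gives just before the statement. Your density argument from $M^1$ to $\Lt$ is a welcome elaboration of a step the paper leaves implicit, but it does not constitute a different approach.
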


The following two results are the main ingredients in deriving Janssen representation of the Gabor frame operator $S_{g,h}^\La$ and successive characterization of tight and dual Gabor frames.
\begin{proposition}\label{prop:function F lattice}
Let $\La$ be  a lattice in $\Rtd$. Assume that $f_1,f_2,g,h\in M^1(\R^d)$. Then the function
\begin{equation}\label{eq:function F lattice}
F(z,\tilde{z}) = \big [\widehat{\pi(z)f_1},\widehat{g}\big ]_\La(\tilde{z}) \, \overline{\big [\widehat{\pi(z)f_2},\widehat{h}\big ]_\La(\tilde{z})}
\end{equation}
is continuous and periodic, and coincides pointwise with its Fourier series 
\begin{equation}\label{eq:Fourier F lattice}
 \text{vol}(\La)^{-1} \sum_{\be^* \in\La^*} \sum_{\la\in\La} \innerBig{\A(f_1,g)}{T_\la M_{\be^*} \A(f_2,h)}_{\Lt(\Rtd)}\, e^{-2\pi i \la \cdot \tilde{z}} \, e^{-2\pi i z \cdot \be^*}\,.
\end{equation}
\end{proposition}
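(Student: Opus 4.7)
The approach is to replace each bracket product by its Fourier series (coming from Poisson summation), multiply the two series, and invoke Poisson summation a second time---now in the $z$ variable---to identify the resulting coefficients as the stated inner products.

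First, since $\W(\widehat{f_1},\widehat{g})\in\SO(\Rtd)$ by Lemma~\ref{lem:AinS}, and $\SO$ is invariant under time-frequency shifts, $M_{-z}\W(\widehat{f_1},\widehat{g})\in\SO(\Rtd)$ as well. Applying Theorem~\ref{thm:poisson} inside \eqref{eq:bracket La}, and using $\F^{-1}M_{-z}\W(\widehat{f_1},\widehat{g})=T_z\A(f_1,g)$, yields the pointwise identity
\begin{equation*}
[\widehat{\pi(z)f_1},\widehat{g}]_\La(\tilde{z}) = \sum_{\la\in\La}\A(f_1,g)(\la-z)\,e^{-2\pi i \la\cdot\tilde{z}},
\end{equation*}
with absolute and uniform convergence in $(z,\tilde z)$, because $\A(f_1,g)\in\mathbf{W}(C_0,\lo)(\Rtd)$ by Proposition~\ref{prop:ambiguity}. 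The analogous identity holds for $(f_2,h)$. Joint continuity of $F$ is then immediate from uniform convergence. Periodicity in $\tilde z$ over $\La^*$ is built into the definition as a $\La^*$-periodization. Periodicity in $z$ over $\La$ follows from the series above: a shift $z\mapsto z+\la_0$, $\la_0\in\La$, multiplies each bracket by the phase $e^{-2\pi i\la_0\cdot\tilde z}$ (after reindexing $\la\mapsto\la-\la_0$), and these phases cancel in $F$ against their conjugates.

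Next, multiplying the two absolutely convergent series and reindexing with $\mu=\la-\la'$ gives
\begin{equation*}
F(z,\tilde z) = \sum_{\mu\in\La}\Phi_\mu(z)\,e^{-2\pi i \mu\cdot\tilde z},\qquad \Phi_\mu(z)=\sum_{\nu\in\La}K_\mu(\nu-z),
\end{equation*}
where $K_\mu(y):=\A(f_1,g)(y+\mu)\overline{\A(f_2,h)(y)}$. Since $\SO(\Rtd)$ is a Banach algebra under pointwise multiplication and invariant under translations and conjugation, $K_\mu\in\SO(\Rtd)$, and a second application of Theorem~\ref{thm:poisson} produces
\begin{equation*}
\Phi_\mu(z)=\text{vol}(\La)^{-1}\sum_{\be^*\in\La^*}\F K_\mu(\be^*)\,e^{-2\pi i\be^*\cdot z}.
\end{equation*}
A short change of variables identifies $\F K_\mu(\be^*)=\inner{\A(f_1,g)}{T_\mu M_{\be^*}\A(f_2,h)}$, and substituting back gives \eqref{eq:Fourier F lattice} (with $\mu$ renamed to $\la$).

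The main obstacle is justifying the manipulations above at the level of pointwise, rather than merely $\Lo$ or $\Lt$, equality, which reduces to absolute convergence of the double series over $\La\times\La^*$. I would close this by observing that $\A(f_1,g),\A(f_2,h)\in\SO(\Rtd)$, so the short-time Fourier transform $\V_{\A(f_2,h)}\A(f_1,g)$ lies in $\mathbf{W}(C_0,\lo)(\R^{4d})$; because $\La\times\La^*$ is a relatively separated set in $\R^{4d}$, its samples on this set form an $\lo$-sequence, which (up to unimodular phases) dominates $|\inner{\A(f_1,g)}{T_\mu M_{\be^*}\A(f_2,h)}|$. This absolute summability legitimizes the interchanges of summation and delivers the pointwise identity asserted in the proposition.
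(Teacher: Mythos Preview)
Your proof is correct and uses the same ingredients as the paper's---product of two Poisson-type series, a diagonal reindexing, a second Poisson summation, and the $\lo$-bound coming from $\V_{\A(f_2,h)}\A(f_1,g)\in\mathbf{W}(C_0,\lo)(\R^{4d})$. The only organizational difference is the side on which the multiplication takes place: the paper multiplies the two bracket products directly in their defining form as $\La^*$-periodizations of Wigner distributions, reindexes in $\La^*$, and then applies Poisson summation once (per fixed $\be^*\in\La^*$) to pass to $\La$; you first pass each bracket to its Fourier series over $\La$, multiply and reindex on the $\La$ side, and then apply Poisson summation once (per fixed $\mu\in\La$) to recover the $\La^*$-expansion in $z$. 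These are dual arrangements of the same computation; your version has the mild advantage of working throughout with the ambiguity functions that appear in the final formula, while the paper's version makes the $\La^*$-periodicity in $\tilde z$ more directly visible.
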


\begin{proof}
Let $\La$ be  a lattice and $f_1,f_2,g,h\in M^1(\R^d)$. Using Proposition~\ref{prop:Planch} and a change of index, we write explicitely
\begin{align*}
F(z,\tilde{z}) &= \big [\widehat{\pi(z)f_1},\widehat{g}\big ]_\La(\tilde{z}) \, \overline{\big [\widehat{\pi(z)f_2},\widehat{h}\big ]_\La(\tilde{z})} \, d\tilde{z}\\
&= \text{vol}(\La)^{-2} \sum_{\la^*,\be^* \in\La^*} M_{-z} \W(\widehat{f_1},\widehat{g})(\tilde{z}-\la^*) \overline{M_{-z} \W(\widehat{f_2},\widehat{h})(\tilde{z}-\be^*)}\\
&= \text{vol}(\La)^{-2} \sum_{\la^*,\be^* \in\La^*}  \W(\widehat{f_1},\widehat{g})(\tilde{z}-\la^*) \overline{\W(\widehat{f_2},\widehat{h})(\tilde{z}-\be^*)}\, e^{-2\pi i z \cdot (\be^*- \la^*)}\\
&= \text{vol}(\La)^{-2} \sum_{\la^*,\be^* \in\La^*}  \W(\widehat{f_1},\widehat{g})(\tilde{z}-\la^*) \overline{\W(\widehat{f_2},\widehat{h})(\tilde{z}-\la^* - \be^*)} \, e^{-2\pi i z \cdot \be^*}\\
&= \text{vol}(\La)^{-1} \sum_{\be^* \in\La^*} \left [ \text{vol}(\La)^{-1} \sum_{\la^* \in\La^*}  \W(\widehat{f_1},\widehat{g})(\tilde{z}-\la^*) \overline{T_{\be^*}\W(\widehat{f_2},\widehat{h})(\tilde{z}-\la^*)} \right ] \, e^{-2\pi i z \cdot \be^*}\,.
\end{align*}
For a fixed $\be^*\in\La^\ast$, consider the series
\begin{equation*}
F_{\be^*}(\tilde{z}) =\text{vol}(\La)^{-1} \sum_{\la^* \in\La^*}  \W(\widehat{f_1},\widehat{g})(\tilde{z}-\la^*) \overline{T_{\be^*}\W(\widehat{f_2},\widehat{h})(\tilde{z}-\la^*)}\,.
\end{equation*}
Since $\W(\widehat{f_1}, \widehat{g})$ and $\W(\widehat{f_2}, \widehat{h})$ lie in $M^1(\Rtd)$, their product as well. Moreover, 
\begin{equation*}
\F^{-1}\big (\W(\widehat{f_1}, \widehat{g}) \overline{T_{\be^*}\W(\widehat{f_2}, \widehat{h})}\big ) = \A(f_1,g) \ast (M_{\be^*} \A(f_2,h))^*  \in M^1(\Rtd)\,,
\end{equation*}
since $\A(f_1,g),\A(f_2,h) \in M^1(\Rtd)$. Therefore, by Poisson summation formula we can write $F_{\be^*}(\tilde{z})$ as
\begin{align*}
F_{\be^*}(\tilde{z}) &= \sum_{\la \in \La} \big (\A(f_1,g) \ast  (M_{\be^*} \A(f_2,h))^*\big )(\la)\, e^{-2\pi i \la \cdot \tilde{z}}\\
&= \sum_{\la\in\La} \innerBig{\A(f_1,g)}{T_\la M_{\be^*} \A(f_2,h)}_{\Lt(\Rtd)}\, e^{-2\pi i \la \cdot \tilde{z}}\,,
\end{align*}
and $F(z,\tilde{z})$ becomes
\begin{equation}\label{eq:series F}
F(z,\tilde{z}) =  \text{vol}(\La)^{-1} \sum_{\be^* \in\La^*} \sum_{\la\in\La} \innerBig{\A(f_1,g)}{T_\la M_{\be^*} \A(f_2,h)}_{\Lt(\Rtd)}\, e^{-2\pi i \la \cdot \tilde{z}} \, e^{-2\pi i z \cdot \be^*}\,.
\end{equation}
By Lemma~\ref{lem:AinS}, $\A(f_1,g),\A(f_2,h) \in M^1(\Rtd)$, and therefore, 
\begin{equation*}
\V_{\A(f_2,h)}( \A(f_1,g))(w,\tilde{w})= e^{2\pi i w \cdot \tilde{w} }\innerBig{\A(f_1,g)}{\pi(w,\tilde{w}) \A(f_2,h)} \in \mathbf{W}(C_0,\ell^1)(\Rtd\times \Rtd)\,.
\end{equation*} 
Hence, $\V_{\A(f_2,h)}( \A(f_1,g))$ restricted to $\La \times \La^*$ belongs to $\lo(\La\times \La^*)$, and as a consequence, the series \eqref{eq:series F} defining $F(z,\tilde{z})$ is absolutely convergent. By the uniqueness of the Fourier series,  \eqref{eq:series F} is the Fourier series of $F$.
\end{proof}

\begin{proposition}\label{prop:function N lattice}
Let $\La$ be  a lattice in $\Rtd$. Assume that $g_i,h_i\in M^1(\Rtd)$, for every $i=1,\ldots,M$. Then for every $f_1,f_2\in M^1(\Rtd)$, the function
\begin{equation}\label{eq:function N lattice}
\mathcal{N}(z) = \sum_{i=1}^M \sum_{\la\in\La} \inner{\pi(z)f_1}{\pi(\la)g_i}\overline{\inner{\pi(z)f_2}{\pi(\la)h_i}}
\end{equation}
is continuous and periodic, and coincides pointwise with its Fourier series $\sum_{\la^\circ \in\La^\circ} \widehat{\mathcal{N}}(\la^\circ) e^{-2\pi i \sigma(\la^\circ, z)}$, with
\begin{equation}\label{eq:coeff N lattice}
\widehat{\mathcal{N}}(\la^\circ) = \text{vol}(\La)^{-1} \sum_{i=1}^M \inner{h_i}{\pi(\la^\circ) g_i} \inner{\pi(\la^\circ) f_1}{f_2}\,,
\end{equation}
where $\La^\circ = J\La^*$ is the adjoint lattice and $\sigma$ a symplectic form defined as $\sigma(\theta,z) = \theta \cdot Jz$, with $\theta$ and $z$ seen as column vectors in $\Rtd$.
\end{proposition}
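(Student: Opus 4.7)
The plan is to reduce this proposition to Propositions~\ref{prop:Planch} and~\ref{prop:function F lattice}, and then to rewrite the resulting Fourier coefficients via a Moyal-type identity that brings them into the ``time-domain'' form appearing in \eqref{eq:coeff N lattice}.

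First I would rewrite the summand of $\mathcal{N}(z)$ in terms of the cross-ambiguity function. Using Proposition~\ref{prop:ambiguity}(b) together with the covariance of the STFT under time-frequency shifts, one checks that $\inner{\pi(z)f}{\pi(\la)g} = c(z,\la)\,\A(f,g)(\la-z)$ for a unimodular scalar $c(z,\la)$ depending only on $(z,\la)$ and not on $(f,g)$. These phases cancel in the conjugate product, so
\begin{equation*}
\mathcal{N}(z) = \sum_{i=1}^M \sum_{\la\in\La} \A(f_1,g_i)(\la-z)\,\overline{\A(f_2,h_i)(\la-z)}.
\end{equation*}
Applying Proposition~\ref{prop:Planch} for each $i$ identifies the inner sum with $\int_{\T_{\La^*}} F_i(z,\tilde z)\,d\tilde z$, where $F_i$ is the function \eqref{eq:function F lattice} built from $(f_1,f_2,g_i,h_i)$. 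Substituting its absolutely convergent Fourier expansion \eqref{eq:Fourier F lattice} from Proposition~\ref{prop:function F lattice} and integrating term by term in $\tilde z$, only the $\la = 0$ contribution survives and yields
\begin{equation*}
\mathcal{N}(z) = \text{vol}(\La)^{-1}\sum_{\be^*\in\La^*}\Big(\sum_{i=1}^M \innerBig{\A(f_1,g_i)}{M_{\be^*}\A(f_2,h_i)}_{\Lt(\Rtd)}\Big)\,e^{-2\pi i z\cdot\be^*}.
\end{equation*}

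The heart of the argument is the Moyal-type identity
\begin{equation*}
\innerBig{\A(f_1,g_i)}{M_{\be^*}\A(f_2,h_i)}_{\Lt(\Rtd)} = \inner{\pi(\ala) f_1}{f_2}\,\inner{h_i}{\pi(\ala) g_i}, \qquad \ala := J\be^*.
\end{equation*}
To establish it, Proposition~\ref{prop:ambiguity}(b) makes the chirp factors cancel in the conjugate product, so $\A(f_1,g_i)\,\overline{\A(f_2,h_i)} = \V_{g_i}f_1\,\overline{\V_{h_i}f_2}$. A direct computation yields the STFT covariance $\V_{\pi(\eta)g}(\pi(\eta)f)(w) = e^{-2\pi i\sigma(\eta,w)}\V_g f(w)$, and with $\eta=\ala$ the orthogonality $J^TJ=I$ gives $\sigma(\ala,w)=\be^*\cdot w$, so that the modulation $e^{-2\pi i\be^*\cdot w}$ is absorbed into the STFT; Moyal's formula for the STFT then produces the displayed identity. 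Re-indexing the sum over $\be^*\in\La^*$ via $\ala=J\be^*\in\aLa=J\La^*$, with $e^{-2\pi i z\cdot\be^*} = e^{-2\pi i\sigma(\ala,z)}$, yields $\mathcal{N}(z) = \sum_{\ala\in\aLa}\widehat{\mathcal{N}}(\ala)\,e^{-2\pi i\sigma(\ala,z)}$ with $\widehat{\mathcal{N}}(\ala)$ exactly as in \eqref{eq:coeff N lattice}.

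Continuity and periodicity then follow readily. Since each $g_i,h_i,f_1,f_2\in M^1(\R^d)$, the STFTs $\V_{g_i}h_i$ and $\V_{f_1}f_2$ lie in $\WCl(\Rtd)$, so restricting them to the relatively separated set $\aLa$ gives $\widehat{\mathcal{N}}\in\lo(\aLa)$; hence the Fourier series converges uniformly and $\mathcal{N}$ is continuous. $\La$-periodicity is immediate from $\sigma(\ala,\la)\in\Z$ for $\la\in\La$ and $\ala\in\aLa$. The main obstacle is the Moyal-type identity above: although its ingredients are elementary, correctly tracking the STFT covariance phases together with the symplectic pairing $\sigma$ between $\La^*$ and $\aLa$ requires care; once it is in hand, everything else is a routine reassembly of Propositions~\ref{prop:Planch} and~\ref{prop:function F lattice} together with Fubini-type interchanges justified by the $\WCl$ absolute convergence.
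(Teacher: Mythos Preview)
Your proof is correct and follows essentially the same approach as the paper: rewrite $\mathcal{N}(z)$ via the cross-ambiguity function, apply Propositions~\ref{prop:Planch} and~\ref{prop:function F lattice}, integrate out $\tilde z$ to isolate the $\la=0$ term, and then simplify the resulting coefficient $\inner{\A(f_1,g_i)}{M_{\be^*}\A(f_2,h_i)}$ by a Moyal-type identity. The only minor difference is in that last step: the paper first passes to the cross-Wigner side via $\inner{\A(f_1,g_i)}{M_{\be^*}\A(f_2,h_i)} = \inner{\W(\widehat{f_1},\widehat{g_i})}{T_{\be^*}\W(\widehat{f_2},\widehat{h_i})}$, then applies the Wigner covariance of Proposition~\ref{prop:wigner}(e) together with Moyal's formula, whereas you stay on the STFT side and use the conjugation covariance $\V_{\pi(\eta)g}(\pi(\eta)f)(w)=e^{-2\pi i\sigma(\eta,w)}\V_g f(w)$ directly before invoking Moyal. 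Your route is slightly more streamlined and avoids the Fourier/Wigner detour, but both arguments are equivalent and neither buys materially more generality.
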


\begin{proof}
Let $\La$ be a lattice in $\Rtd$ and choose $f_1,f_2 \in M^1(\Rtd)$. Then the function $\mathcal{N}(z)$ is well defined and can be expressed using the mixed frame operator $S_{g,h}^\La$ as $\mathcal{N}(z) = \inner{S_{g,h}^{\La} \pi(z)f_1}{\pi(z)f_2}$. Moreover, using the relations between short time Fourier transform and cross-ambiguity function, we can write $\mathcal{N}$ as
\begin{align*}
\mathcal{N}(z) &=  \sum_{i=1}^M \sum_{\la\in\La} \V_{g_i} (\pi(z)f_1 )(\la) \overline{\V_{h_i}(\pi(z)f_2)(\la)} = \sum_{i=1}^M \sum_{\la\in\La} \A(\pi(z)f_1,g_i)(\la) \overline{\A(\pi(z)f_2,h_i)(\la)} \\
&= \sum_{i=1}^M \sum_{\la\in\La} \A(f_1,g_i)(\la-z) \overline{\A(f_2,h_i)(\la-z)}\,.
\end{align*}
For $i=1,\ldots,M$ fixed, let
\begin{equation*}
\mathcal{N}_i(z) = \sum_{\la\in\La} \A(f_1,g_i)(\la-z) \overline{\A(f_2,h_i)(\la-z)}\,,
\end{equation*}
which is a periodic function. Using Proposition~\ref{prop:Planch} and Proposition~\ref{prop:function F lattice}, we have
\begin{align*}
\mathcal{N}_i(z) &= \int_{\T_{\La^*}} \big [\widehat{\pi(z)f_1},\widehat{g_i}\big ]_\La(\tilde{z}) \, \overline{\big [\widehat{\pi(z)f_2},\widehat{h_i}\big ]_\La(\tilde{z})} \, d\tilde{z}\\
&= \text{vol}(\La)^{-1}  \int_{\T_{\La^*}} \sum_{\be^* \in\La^*} \sum_{\la\in\La} \innerBig{\A(f_1,g)}{T_\la M_{\be^*} \A(f_2,h)}_{\Lt(\Rtd)}\, e^{-2\pi i \la \cdot \tilde{z}} \, e^{-2\pi i z \cdot \be^*}\, d\tilde{z}\\
&=  \text{vol}(\La)^{-1} \sum_{\be^* \in\La^*}  \left [ \int_{\T_{\La^*}} \sum_{\la\in\La} \innerBig{\A(f_1,g)}{T_\la M_{\be^*} \A(f_2,h)}_{\Lt(\Rtd)}\, e^{-2\pi i \la \cdot \tilde{z}} \, d\tilde{z} \right ] \, e^{-2\pi i z \cdot \be^*}\\
&= \text{vol}(\La)^{-1} \sum_{\be^* \in\La^*} \innerBig{\A(f_1,g)}{M_{\be^*} \A(f_2,h)}_{\Lt(\Rtd)} \, e^{-2\pi i z \cdot \be^*}\\
&= \text{vol}(\La)^{-1} \sum_{\be^* \in\La^*} \innerBig{\W(\widehat{f_1},\widehat{g_i})}{T_{\be^*}\W(\widehat{f_2},\widehat{h_i})}_{\Lt(\Rtd)} \, e^{-2\pi i z \cdot \be^*}\,.
\end{align*}
The interchange of the integral and the sum is possible due to the Fubini's Theorem. Now, since $\innerBig{\A(f_1,g)}{T_\la M_{\be^*} \A(f_2,h)}_{\Lt(\Rtd)}$ are in $\lo(\La\times\La^*)$ by the proof of Proposition~\ref{prop:function F lattice}, the coefficients $\innerBig{\W(\widehat{f_1},\widehat{g_i})}{T_{\be^*}\W(\widehat{f_2},\widehat{h_i})}_{\Lt(\Rtd)}$ are in $\ell^1(\La^*)$. Hence, for each $i=1,\ldots,M$, the function $\mathcal{N}_i$ is continuous, as it equals the absolutely convergent trigonometric series
\begin{equation*}
\mathcal{N}_i(z) =  \text{vol}(\La)^{-1} \sum_{\be^* \in\La^*}  \innerBig{\W(\widehat{f_1},\widehat{g_i})}{T_{\be^*}\W(\widehat{f_2},\widehat{h_i})}_{\Lt(\Rtd)}\,e^{-2\pi i z \cdot \be^*}\,.
\end{equation*}
The coefficients $\innerBig{\W(\widehat{f_1},\widehat{g_i})}{T_{\be^*}\W(\widehat{f_2},\widehat{h_i})}_{\Lt(\Rtd)}$ can be simplified. Let $\frac{\be^*}{2} = (\frac{\be^*_1}{2},\frac{\be^*_2}{2})$, then using Proposition~\ref{prop:wigner}~$e)$, we have
\begin{align*}
 T_{\frac{-\be^*}{2}}\W(\widehat{f_1},\widehat{g_i})(t,\zeta) &= e^{-\pi i \be_1^* \cdot \be_2^*}  e^{-2\pi i t\cdot\be_2^*} e^{2\pi i \zeta \cdot \be_1^*} \W(\widehat{f_1}, T_{-\be_1^*} M_{-\be_2^*} \widehat{g_i})(t,\zeta)\,,\\
 T_{\frac{\be^*}{2}}\W(\widehat{f_2},\widehat{h_i})(t,\zeta) &= e^{\pi i \be_1^* \cdot \be_2^*}  e^{-2\pi i t\cdot\be_2^*} e^{2\pi i \zeta \cdot \be_1^*} \W( T_{\be_1^*} M_{\be_2^*}\widehat{f_2},\widehat{h_i})(t,\zeta)\,,
\end{align*}
and applying Moyal's formula we obtain
\begin{align*}
 \innerBig{\W(\widehat{f_1},\widehat{g_i})}{T_{\be^*}\W(\widehat{f_2},\widehat{h_i})}_{\Lt(\Rtd)} &=  \innerBig{T_{\frac{-\be^*}{2}} \W(\widehat{f_1},\widehat{g_i})}{T_{\frac{\be^*}{2}}\W(\widehat{f_2},\widehat{h_i})}_{\Lt(\Rtd)} \\
 &= e^{-2\pi i \be_1^* \cdot \be_2^*} \innerBig{\W(\widehat{f_1}, T_{-\be_1^*} M_{-\be_2^*} \widehat{g_i})}{\W(T_{\be_1^*} M_{\be_2^*} \widehat{f_2},\widehat{h_i})}_{\Lt(\Rtd)}\\
 &= e^{-2\pi i \be_1^* \cdot \be_2^*} \inner{\widehat{f_1}}{T_{\be_1^*} M_{\be_2^*} \widehat{f_2}} \overline{\inner{T_{-\be_1^*} M_{-\be_2^*} \widehat{g_i}}{\widehat{h_i}}}\\
&= \inner{M_{-\be_1^*} T_{\be_2^*} f_1}{f_2} \inner{h_i}{M_{-\be_1^*} T_{\be_2^*} g_i} \\
&= \inner{\pi(\la^\circ) f_1}{f_2} \inner{h_i}{\pi(\la^\circ) g_i}\,,
\end{align*}
where $\la^\circ = J \be^* \in J \La^* = \La^\circ$. We can then express $\mathcal{N}_i(z)$ as
\begin{equation*}
\mathcal{N}_i(z) =  \text{vol}(\La)^{-1} \sum_{\la^\circ \in\La^\circ} \inner{\pi(\la^\circ) f_1}{f_2} \inner{h_i}{\pi(\la^\circ) g_i} \,e^{-2\pi i \sigma(\la^\circ,z)}\,,
\end{equation*}
and we have
\begin{equation}\label{eq:series N}
\mathcal{N}(z) = \sum_{i=1}^M \mathcal{N}_i(z) = \text{vol}(\La)^{-1} \sum_{\la^\circ\in\La^\circ} \left (\sum_{i=1}^M \inner{h_i}{\pi(\la^\circ) g_i} \inner{\pi(\la^\circ) f_1}{f_2}\right )\, e^{-2\pi i \sigma(\la^\circ,z)}\,.
\end{equation}
The function $\mathcal{N}$ is continuous since it is a finite sum of continuous functions. By the uniqueness of the Fourier series, \eqref{eq:series} is the Fourier series of $\mathcal{N}$.
\end{proof}

We are now in the position to state the three main identities in Gabor analysis.

\begin{theorem}
Let $\La$ be a lattice in $\Rtd$ with adjoint lattice $\La^\circ$. Then for $g_i,h_i\in M^1(\R^d)$, $i=1,\ldots,M$, the following hold.
\begin{itemize}
\item[i)] Fundamental Identity of Gabor Analysis:
\begin{equation}\label{eq:FIGA}
\sum_{i=1}^M \sum_{\la\in\La} \inner{f_1}{\pi(\la)g_i} \inner{\pi(\la)h_i}{f_2} =   \text{vol}(\La)^{-1} \sum_{i=1}^M \sum_{\la^\circ\in\La^\circ} \inner{h_i}{\pi(\la^\circ)g_i} \inner{\pi(\la^\circ)f_1}{f_2}
\end{equation}
for all $f_1,f_2\in \Lt(\R^d)$.
\item[ii)] Janssen Representation:
\begin{equation}\label{eq:Janssen}
S_{g,h}^\La = \text{vol}(\La)^{-1} \sum_{i=1}^M \sum_{\la^\circ\in\La^\circ} \inner{h_i}{\pi(\la^\circ) g_i} \pi(\la^\circ)\,,
\end{equation}\
where the series converges unconditionally in the strong operator sense.
\item[iii)] Wexler-Raz Biorthogonality Relations:
\begin{equation}\label{eq:WR}
S_{g,h}^\La = I \mbox{  on $\Lt(\R^d)$} \quad   \Longleftrightarrow \quad \text{vol}(\La)^{-1} \sum_{i=1}^M \inner{h_i}{\pi(\la^\circ) g_i} = \delta_{\la^\circ,0} \quad \mbox{for $\la^\circ\in\La^\circ$}\,.
\end{equation}
\end{itemize}
\end{theorem}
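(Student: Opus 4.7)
The plan is to derive all three identities as corollaries of Proposition~\ref{prop:function N lattice}, which already packages the hard analytic work (absolute convergence of the Fourier series for $\mathcal{N}$ on $\Rtd/\La^\circ$). The key observation is that $\mathcal{N}(z) = \inner{S_{g,h}^\La \pi(z)f_1}{\pi(z)f_2}$, so its pointwise values and Fourier coefficients carry all the information needed.

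For part (i), I would simply evaluate both representations of $\mathcal{N}$ at $z = 0$. The defining sum \eqref{eq:function N lattice} gives the left-hand side of \eqref{eq:FIGA} (after replacing $\overline{\inner{f_2}{\pi(\la)h_i}}$ with $\inner{\pi(\la)h_i}{f_2}$), while the absolutely convergent Fourier series with coefficients \eqref{eq:coeff N lattice} produces the right-hand side, since $\sigma(\la^\circ, 0) = 0$. This proves \eqref{eq:FIGA} for $f_1, f_2 \in M^1(\R^d)$; to reach all of $\Lt(\R^d)$ I would appeal to the $M^p$-frame boundedness bounds for $C_{g,\La}, D_{g,\La}$ listed in Section~\ref{sec:not} together with density of $M^1$ in $L^2$, which control both sides uniformly.

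For part (ii), rewriting \eqref{eq:FIGA} as
\begin{equation*}
\inner{S_{g,h}^\La f_1}{f_2} = \innerBig{\text{vol}(\La)^{-1} \sum_{i=1}^M \sum_{\la^\circ\in\La^\circ} \inner{h_i}{\pi(\la^\circ)g_i}\, \pi(\la^\circ) f_1}{f_2}
\end{equation*}
identifies the two operators weakly. The unconditional strong-operator convergence then follows from the absolute summability of the Janssen coefficients $\inner{h_i}{\pi(\la^\circ) g_i}$ over $\La^\circ$, which is inherited from the $\mathbf{W}(C_0,\lo)$-regularity of $\A(h_i, g_i)$ (via Proposition~\ref{prop:ambiguity} and its restriction-to-lattice property) — this summability is precisely what was used to justify Fubini and absolute convergence inside the proof of Proposition~\ref{prop:function N lattice}.

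For part (iii), I would use uniqueness of Fourier series on $\Rtd / \La^\circ$. If $S_{g,h}^\La = I$ on $\Lt(\R^d)$, then $\mathcal{N}(z) = \inner{\pi(z)f_1}{\pi(z)f_2} = \inner{f_1}{f_2}$ is constant in $z$, so all Fourier coefficients \eqref{eq:coeff N lattice} vanish except the one at $\la^\circ = 0$, which must equal $\inner{f_1}{f_2}$. Choosing $f_1, f_2 \in M^1(\R^d)$ so that $\inner{\pi(\la^\circ)f_1}{f_2}$ is nonzero for the chosen $\la^\circ$ (easy, since $\pi(\la^\circ)f_1 \neq 0$) extracts $\text{vol}(\La)^{-1}\sum_i \inner{h_i}{\pi(\la^\circ)g_i} = \delta_{\la^\circ,0}$. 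The converse is immediate from Janssen's representation \eqref{eq:Janssen}, which collapses to the identity operator under the biorthogonality. The only real technical obstacle I anticipate is the $M^1 \to \Lt$ extension in (i)–(ii); everything else is bookkeeping once Proposition~\ref{prop:function N lattice} is in hand.
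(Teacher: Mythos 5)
Your proposal is correct and follows essentially the same route as the paper: all three identities are read off from Proposition~\ref{prop:function N lattice} via $\mathcal{N}(z)=\inner{S_{g,h}^\La\pi(z)f_1}{\pi(z)f_2}$ evaluated at $z=0$, with (iii) obtained from the constancy of $\mathcal{N}$ and uniqueness of Fourier coefficients, and the converse of (iii) from Janssen. Your added remarks — the density/boundedness extension from $M^1$ to $L^2$ and the $\ell^1(\La^\circ)$ summability of $\inner{h_i}{\pi(\la^\circ)g_i}$ giving unconditional strong-operator convergence — only make explicit points the paper leaves implicit.
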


\begin{proof}
The Fundamental Identity of Gabor Analysis and Janssen representation follow directly from Proposition~\ref{prop:function N lattice}. It suffices to prove the statements on a dense subspace of $\Lt(\R^d)$. Let $f_1,f_2\in M^1(\R^d)$, then the left hand side of \eqref{eq:FIGA} equals the function $\mathcal{N}$ of Proposition~\ref{prop:function N lattice} evaluated at $z=0$. Since $\mathcal{N}$ equals its Fourier series expansion, we have 
\begin{equation*}
\mathcal{N}(0) =  \text{vol}(\La)^{-1} \sum_{\la^\circ\in\La^\circ} \sum_{i=1}^M \inner{h_i}{\pi(\la^\circ) g_i} \inner{ \pi(\la^\circ) f_1}{f_2}\,,
\end{equation*}
which gives \eqref{eq:FIGA}. 

As for Janssen representation, we observed in the proof of Proposition~\ref{prop:function N lattice} that $\mathcal{N}(z) = \inner{S_{g,h}^\La \pi(z)f_1}{\pi(z) f_2}$ for fixed $f_1,f_2\in M^1(\R^d)$. Evaluating $\mathcal{N}$ at $z=0$ and using the Fourier series representation of $\mathcal{N}$, we obtain
\begin{align*}
\inner{S_{g,h}^\La f_1}{f_2} &=  \text{vol}(\La)^{-1} \sum_{i=1}^M \sum_{\la^\circ\in\La^\circ} \inner{h_i}{\pi(\la^\circ) g_i} \inner{\pi(\la^\circ) f_1}{f_2}\\
&= \innerBig{\text{vol}(\La)^{-1} \sum_{i=1}^M \sum_{\la^\circ\in\La^\circ} \inner{h_i}{\pi(\la^\circ) g_i} \pi(\la^\circ) f_1}{f_2}\,,
\end{align*}
which is the Janssen representation of the frame operator $S_{g,h}^\La$.

The implication $\Longleftarrow$ of $iii)$ follows trivially from the Janssen representation of $S_{g,h}^\La$. For the converse, assume that $S_{g,h}^\La = I$. Let $f_1,f_2 \in M^1(\R^d)$, then $\mathcal{N}$ of Proposition~\ref{prop:function N lattice} is a constant function. Indeed, 
\begin{equation*}
\inner{f_1}{f_2} = \inner{\pi(z)f_1}{\pi(z)f_2} = \inner{S_{g,h}^\La \pi(z)f_1}{\pi(z)f_2} = \mathcal{N}(z)
\end{equation*} 
for every $z\in\Rtd$. Since $\mathcal{N}$ is a constant function, it equals its $0$th Fourier coefficient $\widehat{\mathcal{N}}(0) = \inner{f_1}{f_2}$. By Proposition~\ref{prop:function N lattice}, we have
\begin{equation}\label{eq:rel1}
\text{vol}(\La)^{-1} \sum_{i=1}^M \inner{h_i}{\pi(\la^\circ) g_i} \inner{\pi(\la^\circ) f_1}{f_2} = \delta_{\la^\circ,0} \inner{f_1}{f_2}\,.
\end{equation}
Fix $\la^\circ = (\la_1^\circ,\la_2^\circ)\in \La^\circ$ and let $f\in M^1(\R^d)$ be a nonzero function. By letting $f_1 = T_{-\la_1^\circ} M_{-\la_2^\circ} f$ and $f_2=f$, \eqref{eq:rel1} becomes 
\begin{equation*}
\text{vol}(\La)^{-1} \sum_{i=1}^M \inner{h_i}{\pi(\la^\circ) g_i} \inner{f}{f} = \delta_{\la^\circ,0} \inner{T_{-\la_1^\circ} M_{-\la_2^\circ}f}{f}\,,
\end{equation*}
and the right hand side of \eqref{eq:WR} holds.
\end{proof}

In the subsequent sections we will turn our attention to Gabor frames for model sets by generalizing the construction we have just presented. 


\section{Almost periodic functions, model sets and local functions}\label{sec:ms}

The main object of our investigation are Gabor frames for model sets, and in the treatment of such frames, we naturally come across almost periodic functions. Therefore, we begin with the review of some basic facts about almost periodic functions and finish with a definition and some properties of model sets. For a detailed exposition on almost periodic functions we refer to \cite{B47, Bes55,BB31}.

We say that a bounded and continuous function $f:\Rtd\rightarrow \C$ is {\it almost periodic}, if to every $\epsilon >0$ there corresponds a relatively dense set $E(f,\epsilon) \subseteq \Rtd$, such that for every $\tau\in E(f,\epsilon)$,
\begin{equation*}
\sup_{z\in\Rtd}\abs{f(z + \tau) - f(z)} \leq \epsilon\,.
\end{equation*}
A subset $D$ is called relatively dense in $\Rtd$ when there exists $r>0$, such that for all $z\in\Rtd$, $D\cap B(z,r) \neq \emptyset$, where $B(z,r)$ is a ball of radius $r$ in $\Rtd$ centered at $z$. Each $\tau\in E(f,\epsilon)$ is called an $\epsilon$-period of $f$. Let $AP(\Rtd)$ denote the space of almost periodic functions. Each almost periodic function is uniformly continuous and admits a formal Fourier series
\begin{equation}\label{eq:Fourier series}
f(z) \sim \sum_{\la \in \sigma(f)} a(\la,f) e^{-2\pi i \la \cdot  z}\,,
\end{equation}
where 
\begin{equation*}
a(\la,f) := \M_z\{f(z) e^{2\pi i \la \cdot z}\} =  \lim_{R\rightarrow \infty} \frac{1}{R^{2d}} \int_{B(0,R)} f(z) e^{2\pi i \la \cdot z}\,dz\,,
\end{equation*} 
are the Fourier coefficients of $f$, and $\sigma(f)$ is the so-called Bohr spectrum of $f$,
\begin{equation*}
\sigma(f) := \big\{\lambda\in\R^m \,:\, a(\la,f) \neq 0 \big \}\,,
\end{equation*}
and it forms a countable set, that is only for a countable number of $\la\in\R^m$, $a(\la,f)$ is nonzero (\cite{B47}). The $0$-th Fourier coefficient of $f$,
\begin{equation*}
a(0,f) := \M_z\{f(z)\}\,,
\end{equation*}
is called {\it the mean value} of $f$. If $f(z,\tilde{z})$, with $(z,\tilde{z}) \in \Rtd \times \Rtd$, is almost periodic, then it is almost periodic with respect to each of the variables $z$ and $\tilde{z}$. Moreover, $\M_{\tilde{z}}\{ f(z,\tilde{z})\}$ is an almost periodic function of $z$.

We gather the important identities of almost periodic functions, that we will use throughout the article, in the following theorem.
\begin{theorem}[\cite{B47}]
Let $f\in AP(\Rtd)$ with the Fourier series given by \eqref{eq:Fourier series}. Then the following hold.
\begin{itemize}
\item[i)] Bohr's Fundamental Theorem: $\M_z\{\abs{f(z)}^2\} = \sum_{\la\in\sigma(f)} \abs{a(\la,f)}^2$.
\item[ii)] Plancherel's Theorem: $\M_z\{ f(z)\, \overline{g(z)} \} = \sum_{\la\in\sigma(f)} a(\la,f)\,\overline{a(\la,g)}$ for all $g\in AP(\Rtd)$ with $\sigma(f) = \sigma(g)$.
\item[iii)] If all the coefficients $a(\la,f)$ of $f\in AP(\R^m)$ are zero, then the function $f\equiv 0$.
\item[iv)] When $f$ is non-negative, then $\M \{ f \} = 0$ if and only if $f\equiv 0$.
\end{itemize}
\end{theorem}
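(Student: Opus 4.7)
The plan is to treat Bohr's approximation theorem as the one deep input (proved in \cite{B47}) and reduce everything else to elementary computations on trigonometric polynomials plus one geometric argument for (iv). The approximation theorem states that every $f\in AP(\Rtd)$ is a uniform limit of trigonometric polynomials $P_n(z)=\sum_{\la\in F_n}c_\la^{(n)}e^{-2\pi i\la\cdot z}$ with $F_n\subset\sigma(f)$ finite. I would combine it with the orthogonality $\M_z\{e^{-2\pi i(\la-\mu)\cdot z}\}=\delta_{\la,\mu}$ (a direct computation from the definition of $\M$) and the obvious continuity of $\M$ under uniform convergence, $|\M_z\{f_n\}-\M_z\{f\}|\leq\|f_n-f\|_\infty$, and then prove the four items in the order (i), (ii), (iv), (iii).

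For (i), orthogonality gives $\M_z\{|P_n|^2\}=\sum_{\la\in F_n}|c_\la^{(n)}|^2$ on each polynomial; uniform convergence of $P_n\to f$ (plus uniform boundedness) yields $|P_n|^2\to|f|^2$ uniformly, hence $\M_z\{|P_n|^2\}\to\M_z\{|f|^2\}$. At the same time $a(\la,P_n)\to a(\la,f)$ uniformly in $\la$, because their difference equals $\M_z\{(P_n-f)(z)e^{2\pi i\la\cdot z}\}$ and is bounded by $\|P_n-f\|_\infty$. A Bessel-type inequality for $f$, obtained by expanding $\M_z\{|f-\sum_{\la\in S}a(\la,f)e^{-2\pi i\la\cdot z}|^2\}\geq 0$ over finite $S\subset\sigma(f)$, yields $\sum_\la|a(\la,f)|^2\leq\M_z\{|f|^2\}$; the matching reverse inequality comes from the two convergences above, giving Bohr's identity. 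Statement (ii) is then the polarization $4f\overline g=|f+g|^2-|f-g|^2+i|f+ig|^2-i|f-ig|^2$ applied term-by-term using (i) and the linearity $a(\la,f+g)=a(\la,f)+a(\la,g)$; the hypothesis $\sigma(f)=\sigma(g)$ simply ensures that the summation sets match.

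The proof of (iv) is genuinely different and is the only place where continuity and almost periodicity are used in an essential way. Assuming $f\geq 0$ and $f(z_0)>0$, fix by continuity a ball $B(z_0,r)$ and $c>0$ with $f>2c$ there, and let $E=E(f,c)$ be the relatively dense set of $c$-periods with gap bound $L$. For $\tau\in E$ the inequality $|f(w)-f(w-\tau)|<c$ gives $f>c$ on $B(z_0+\tau,r)$, so $f>c$ on $U:=\bigcup_{\tau\in E}B(z_0+\tau,r)$. A greedy packing argument extracts a $2r$-separated subfamily $E'\subset E$ that is still relatively dense; then the balls $\{B(z_0+\tau,r)\}_{\tau\in E'}$ are disjoint and sit inside $U$, so $|U\cap B(0,R)|\gtrsim R^{2d}$ for large $R$, whence $\M_z\{f\}\geq c\cdot d_0>0$ for some $d_0>0$, contradicting $\M_z\{f\}=0$; the converse is trivial. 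Item (iii) then follows immediately: if every $a(\la,f)=0$, (i) yields $\M_z\{|f|^2\}=0$, and since $|f|^2$ is non-negative, continuous, bounded and almost periodic, (iv) forces $|f|^2\equiv 0$, hence $f\equiv 0$.

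The main obstacle in the whole program is Bohr's approximation theorem itself, whose classical proof (Bohr, refined by Bochner and Weyl) rests on the precompactness characterization of almost periodicity and a Fej\'er-type summation over $\sigma(f)$. Since the theorem here is quoted directly from \cite{B47}, I would invoke it as a black box rather than reprove it; once it is in hand, the four items of the statement reduce to the routine orthogonality, polarization, and packing manipulations outlined above.
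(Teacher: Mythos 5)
This theorem is quoted in the paper from Bohr's monograph \cite{B47} without proof, so there is no internal argument to compare against; what you give is the standard textbook route, and in outline it is sound: Bohr's approximation theorem as the one deep input, orthogonality of characters under the mean $\M$, continuity of $\M$ under uniform convergence, Bessel's inequality, polarization for (ii), the relative-density/packing argument for (iv), and (iii) as a corollary of (i) and (iv). Items (ii), (iii) and (iv) are correct as you describe them; in particular the packing step in (iv) works, since a maximal $2r$-separated subset $E'$ of $E(f,c)$ is still relatively dense (with constant $L+2r$), the balls $B(z_0+\tau,r)$, $\tau\in E'$, are pairwise disjoint, and a ball-packing count gives $\#\{\tau\in E'\cap B(0,R)\}\gtrsim R^{2d}$, hence $\M_z\{f\}>0$.

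The one step stated too loosely is the reverse inequality in (i). You derive it ``from the two convergences above'', namely $\M_z\{\abs{P_n}^2\}\to\M_z\{\abs{f}^2\}$ and $\sup_\la\abs{a(\la,P_n)-a(\la,f)}\le\norm{P_n-f}_\infty\to 0$. These two facts alone do not suffice: $\M_z\{\abs{P_n}^2\}=\sum_{\la\in F_n}\abs{a(\la,P_n)}^2$, and sup-norm closeness of the coefficients does not control this $\ell^2$-sum when $\#F_n\to\infty$ (the discrepancy can be of size $\#F_n\cdot\norm{P_n-f}_\infty^2$, and the Bochner--Fej\'er polynomials have rapidly growing frequency sets). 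The repair stays inside your own toolkit: apply the Bessel inequality you already established to the almost periodic function $f-P_n$ over the finite set $F_n$, which gives $\sum_{\la\in F_n}\abs{a(\la,f)-a(\la,P_n)}^2\le \M_z\{\abs{f-P_n}^2\}\le\norm{f-P_n}_\infty^2\to 0$, and conclude by the triangle inequality in $\ell^2(F_n)$; equivalently, expand $\M_z\{\abs{f-P_n}^2\}=\M_z\{\abs{f}^2\}-\sum_{\la\in F_n}\abs{a(\la,f)}^2+\sum_{\la\in F_n}\abs{a(\la,f)-a(\la,P_n)}^2$ and use $\M_z\{\abs{f-P_n}^2\}\to 0$ to get $\M_z\{\abs{f}^2\}\le\sum_{\la\in\sigma(f)}\abs{a(\la,f)}^2$ directly. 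With this adjustment the whole scheme (i)--(iv) is complete.
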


Throughout the exposition we will be encountering almost periodic functions whose spectrums lie in model set. We state the basic definitions and theorems for even dimensional model sets since only those we will use, however the same definitions and properties apply in any dimension.

We begin with a lattice in $\Ga \subset \Rtd \times \R^n$, where $\Rtd$ and $\R^n$ are equipped with Euclidean metrics and $\Rtd \times \R^n$ is the orthogonal sum of the two spaces. Let $p_1:\Rtd \times \R^n \rightarrow \Rtd$ and $p_2: \Rtd \times \R^n \rightarrow \R^n$ be projection maps such that $p_1|\Ga$ is injective and $L= p_1(\Ga)$ is a dense subgroup of $\Rtd$. We impose the same properties on $p_2$. For the dual lattice of $\Ga$, denoted by $\Ga^\ast$, let $p_1^*,p_2^*$ be defined as $p_1,p_2$. It holds then, that  $p_1^*|\Ga^\ast$ is injective and $p_1^*(\Ga^\ast)$ is a dense subgroup of $\Rtd$, and the same holds for $p_2^*$.  Moreover, for $\ga\in\Ga$ and $\ga^*\in\Ga^*$,
\begin{equation*}
\Z \ni \ga \cdot \ga^* = (p_1(\ga),p_2(\ga))\cdot (p_1^*(\ga^*),p_2^*(\ga^*)) = p_1(\ga) \cdot p_1^*(\ga^*) + p_2(\ga)\cdot p_2^*(\ga^*)
\end{equation*}
Let $\Om\subset \R^n$ be compact, equal to the closure of its interior and have boundary of measure $0$. We call $\Om$ a window. Then a model set $\La(\Om)$ is defined as 
\begin{equation*}
\La(\Om) :=  \left \{p_1(\ga)\,:\, \ga\in\Ga,\, p_2(\ga) \in \Om \right \}  \subset L \subset \Rtd\,.
\end{equation*}
If $\Om$ is symmetric around the origin then $0\in\La(\Om)$. Model set is {\it generic} if the boundary of $\Om$ has no common points with $p_2(\Ga)$. A model set is {\it simple} if $n=1$. We will be working only with simple model sets. We assume, without loss of generality, that from now on $\Om$ is symmetric around the origin.

Model sets are a very natural generalizations of lattices, and for $n=0$ they reduce to a lattice and, thus, the results that we develop later on in the article reduce to the known ones for lattices. If $\La(\Om)$ is a model set, then it is uniformly discrete, relatively dense, and has a well defined density 
\begin{equation*}
D(\La(\Om)) = \lim_{R\rightarrow \infty} \frac{\# \{\La(\Om)\cap B(x,R)\}}{R^{2d}}\,,
\end{equation*}
where $\#S$ denotes the cardinality of the set $S$ and $B(x,R)$ is a ball of radius $R$ in $\Rtd$ centered at $x$ . The limit is independent of $x\in\Rtd$. For a simple model set $\La(\Om)$, we have $D(\La(\Om)) = \text{vol}(\Ga)^{-1} \abs{\Om}$, see \cite{BM00}.

Due to the underlying lattice structure of a model set, there exists a Poisson summation formula for $\La(\Om)$. Let $C_0^\infty(\Om)$ be the space of all smooth, real valued functions on $\R$ with support in $\Om$. Via the mapping $p_2\circ(p_1|_\Ga)^{-1}: L \rightarrow \R$ we obtain a space $\mathcal{C}(\La(\Om))$ of functions on $L$, vanishing off $\La(\Om)$:  for $\psi \in C_0^\infty(\Om)$, we define $w_\psi \in \mathcal{C}(\La(\Om))$ by
\begin{equation}\label{def:w_psi}
w_\psi: L \rightarrow \R \,,\quad \quad w_\psi(\la) := \psi(p_2(\ga)) \quad \mbox{for} \quad \la=p_1(\ga)\in \La(\Om)\,,
\end{equation}
and $w_\psi(\la)=0$ for $\la\notin \La(\Om)$. If $\psi$ were the indicator function of $\Om$, we would have $w_\psi(\la) = 1$ on $\La(\Om)$ and $w_\psi(\la)=0$ if $\la \notin \La(\Om)$. However, the indicator function is not smooth. The Poisson summation formula for model sets was originally stated for the class of Schwartz functions in \cite{Me12}. However, since it relies on the original Poisson summation formula, we can state it for a bigger space.

\begin{theorem}[Poisson Summation Formula for Model Sets]\label{thm:poisson_MS}
Let $\La(\Om)$ be a simple model set defined by a relatively compact set $\Om\subseteq \R$ of non-empty interior and a lattice $\Ga\subseteq \Rtd \times \R$. Let $\psi\in C_0^\infty(\Om)$, and the weight factors $w_\psi(\la)$ on $\La(\Om)$ be defined as in \eqref{def:w_psi}. Then, for every $F\in M^1(\Rtd)$, the following holds
\begin{equation}\label{eq:poisson}
\sum_{\la\in\La(\Om)} w_\psi(\la) F(\la)e^{-2\pi i \la \cdot z} =  \sum_{\ga^\ast \in \Ga^\ast} \widetilde{w}_\psi(-p^*_2(\ga^\ast))  \widehat{F}(z - p^*_1(\ga^\ast))\,,
\end{equation}
where 
\begin{equation}\label{eq:Fourier weight}
\widetilde{w}_\psi(p^*_2(\ga^\ast)) := \text{vol}(\Ga)^{-1} \widehat{\psi}(p^*_2(\ga^\ast))\quad \quad \mbox{for} \quad \ga^\ast\in \Ga^\ast\,.
\end{equation}
The identity holds pointwise for all $z\in\Rtd$, and both sums converge uniformly and absolutely for all $t\in\Rtd$. \end{theorem}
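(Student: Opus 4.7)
The plan is to lift the problem to the ambient lattice $\Ga\subseteq\Rtd\times\R$ and apply the classical Poisson summation formula there, with $F$ paired against $\psi$ in a tensor-product. The main conceptual step is to recognise that the weights $\psi(p_2(\ga))$ and the evaluations $F(p_1(\ga))$ together describe a single tensor-product function on $\R^{2d+1}$; once this is seen, the rest reduces to Theorem~\ref{thm:poisson} applied to $\Ga$. Concretely, define $G:\Rtd\times\R\to\C$ by $G(z',y):=F(z')\psi(y)$. Since $F\in M^1(\Rtd)$ and $\psi\in C_0^\infty(\R)\subset\mathcal{S}(\R)\subset M^1(\R)$, a short computation with the short-time Fourier transform (which factors over tensor products against a tensor-product window) gives $G\in M^1(\R^{2d+1})$ with $\norm{G}_{M^1}\le\norm{F}_{M^1}\norm{\psi}_{M^1}$. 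This tensor-product stability of $M^1$ is the only technical point of the proof. Theorem~\ref{thm:poisson}, whose statement and proof carry over verbatim to lattices in any finite-dimensional Euclidean space, then applies to $G$ and $\Ga$.

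Next, I rewrite the left-hand side of \eqref{eq:poisson} as a sum over the full lattice $\Ga$. Since $p_1|_\Ga$ is injective and $\psi$ is supported in $\Om$, the definition \eqref{def:w_psi} of $w_\psi$ gives $\psi(p_2(\ga))=w_\psi(p_1(\ga))$ whenever $p_2(\ga)\in\Om$, and both quantities vanish otherwise. Using also $\ga\cdot(z,0)=p_1(\ga)\cdot z$ in the standard inner product on $\Rtd\times\R$, this yields
\begin{equation*}
\sum_{\la\in\La(\Om)} w_\psi(\la)\,F(\la)\,e^{-2\pi i\la\cdot z} \;=\; \sum_{\ga\in\Ga} G(\ga)\,e^{-2\pi i\ga\cdot(z,0)}\,.
\end{equation*}
Theorem~\ref{thm:poisson} applied to $G$ at the point $(z,0)\in\R^{2d+1}$ converts the right-hand side into $\text{vol}(\Ga)^{-1}\sum_{\ga^\ast\in\Ga^\ast}\widehat G\bigl((z,0)-\ga^\ast\bigr)$, with absolute and uniform convergence in $z$.

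Finally, the tensor-product structure of $G$ yields $\widehat G(\zeta,\eta)=\widehat F(\zeta)\,\widehat\psi(\eta)$, so that
\begin{equation*}
\widehat G\bigl((z,0)-\ga^\ast\bigr) \;=\; \widehat F\bigl(z-p_1^\ast(\ga^\ast)\bigr)\,\widehat\psi\bigl(-p_2^\ast(\ga^\ast)\bigr)\,.
\end{equation*}
Absorbing $\text{vol}(\Ga)^{-1}$ into $\widehat\psi$ according to the definition \eqref{eq:Fourier weight} of $\widetilde w_\psi$ produces exactly the right-hand side of \eqref{eq:poisson}. The uniform and absolute convergence claims transfer directly from those in Theorem~\ref{thm:poisson} applied to $G$ and $\Ga$.
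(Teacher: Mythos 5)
Your proof is correct and follows essentially the same route as the paper: form the tensor product $F\otimes\psi\in M^1(\Rtd\times\R)$, apply the ordinary lattice Poisson summation formula to $\Ga$, and evaluate at $(z,0)$, using the factorization of the Fourier transform to read off the weights $\widetilde{w}_\psi(-p_2^*(\ga^*))$. You merely spell out the tensor-product factorization and the extension of Theorem~\ref{thm:poisson} to lattices in $\R^{2d+1}$ more explicitly than the paper does.
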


Meyer, in \cite{Me12}, originally stated the Poisson summation formula for model sets for functions in the Schwartz class $\mathcal{S}(\Rtd)$. As the Poisson summation formula for model sets follows from the ordinary Poisson summation formula, which holds not only for the elements from $\mathcal{S}(\Rtd)$ but also for functions in $M^1(\Rtd)$, we were able to extend the former one to a bigger class of functions.

\begin{proof}
Since $w_\psi(\la) = \psi(p_2(\ga))$ for $\la=p_1(\ga)$, one can forget about the restriction $\la\in\La$ which is given for free by the support of $\psi$, and consider a function $f=F \otimes \psi$. Since  $\psi \in C_0^\infty(\Om) \subset M^1(\R)$, $f\in M^1(\Rtd \times \R)$ by the tensor product property of Feichtinger's algebra $M^1$. We can now apply the ordinary Poisson summation formula, Theorem~\ref{thm:poisson}, to the lattice $\Ga$, its dual lattice $\Ga^\ast$ and the function $f$ and obtain 
\begin{equation*}
\sum_{\ga \in \Ga} f(\ga) e^{-2\pi i \ga \cdot z} = \text{vol}(\Ga)^{-1} \sum_{\ga^*\in \Ga^*} \F f(z-\ga^*)\,, 
\end{equation*}
Then \eqref{eq:poisson} follows by taking $z=(x,0)\in \Rtd\times \R$.
\end{proof}

\begin{remark}\label{rem:a.p. construction}
Poisson summation formula for model sets gives a method for constructing almost periodic functions with desired spectrum. Indeed, the function on the right hand side of \eqref{eq:poisson} is almost periodic since it equals an absolutely convergent trigonometric series. By the property of almost periodic functions, the Fourier series of this function coincides with this trigonometric series. That means that the Fourier coefficients of the right hand side of \eqref{eq:poisson} equal $w_\psi(\la)F(\la)$.
\end{remark}

On the collection of point sets in $\Rtd$ that are relatively dense and uniformly separated, with minimal separation greater than $r$, denoted by $D_r(\Rtd)$, we can put a topology, called local topology: two sets $\La$ and $\La'$ of $D_r(\Rtd)$ are close if, for some large $R$ and some small $\epsilon$, one has
\begin{equation}\label{eq:topology}
\La'\cap B(0,R) = (\La +v)\cap B(0,R) \quad \text{for some $v\in B(0,\epsilon)$.}
\end{equation}
Thus for each point of $\La$ within the ball $B(0,R)$, there is a point of $\La'$ within the distance $\epsilon$ of that point, and vice versa. The pairs $(\La,\La')$ satisfying \eqref{eq:topology} are called $(R,\epsilon)$-close. More formally, for $\epsilon >0$ and a ball $B(x,R)$, define
\begin{equation*}
U(\epsilon,B(x,R)):= \{(\La,\La')\in D_r(\Rtd)\times D_r(\Rtd):(\La +v)\cap B(x,R) = \La '\cap B(x,R), \mbox{for some}\, v\in B(0,\epsilon)\}\,.
\end{equation*}
These sets form a fundamental system for a uniform structure on $D_r(\Rtd)$ whose topology has the sets
\begin{equation*}
U(\epsilon,B(x,R))[\La] := \{ \La' \in D_r(\Rtd):(\La,\La')\in U(\epsilon,B(x,R))\}
\end{equation*}
as a neighbourhood basis of $\La$. Note, all the point sets $\La$ from $D_r(\Rtd)$ have the same relative separation $\text{rel}(\La)$.

On the set $D_r(\Rtd)$ we can put a metric. Let $\La,\La' \in D_r(\Rtd)$, then
\begin{equation*}
d(\La,\La') := \limsup_{R\rightarrow \infty} \frac{\# \{ ((\La \cup \La')\setminus (\La \cap \La')) \cap B(0,R)\}}{R^{2d}}
\end{equation*}
is a pseudometric on $D_r(\Rtd)$. We obtain a metric by defining the equivalence relation
\begin{equation*}
\La \equiv \La' \iff d(\La,\La')=0\,.
\end{equation*}

Later in the article, we will work with a collection of model sets. Let $\Om$ be a window, then for each $(s,t)\in\Rtd\times\R$ we may define
\begin{equation*}
\La_{(s,t)}^\Om = s+ \La(\Om - t)
\end{equation*}
Note that $\La(\Om)$ and all its shifts have the same relative separation $\text{rel}(\La(\Om))$.

If $(s,t)\equiv(s',t') \mod \Ga$, then $\La_{(s,t)}^\Om=\La_{(s',t')}^\Om$, however the inverse is not necessarily true. In the sequel we will write $(s,t)_L$ for the congruence class $(s,t)\mod \Ga$. These model sets are parametrized  by the torus  $\T:=(\Rtd\times \R) / \Ga = (\R/\Z)^{2d+1}$. There is  a natural measure, Haar measure, $\theta$ on $\T$. It is invariant under the action of $\Rtd$ on $(\Rtd\times \R )/\Ga$ and it acts by
\begin{equation*}
z+(s,t)_L = (z+s,t)_L\,.
\end{equation*}
We can define an embedding $\Rtd \rightarrow \T$, $z\mapsto (z,0)_L$. The image of this embedding is dense in $\T$.

Now, let $\La(\Om)$ be a model set, and we translate it by elements $z\in\Rtd$
\begin{equation*}
z+\La(\Om) = z + \La(\Om+0) = \La_{(z,0)}^\Om\,.
\end{equation*}
The closure of the set of all translates $\La_{(z,0)}^\Om$ of $\La(\Om)$ under the local topology \eqref{eq:topology} forms the so-called local hull $X(\La(\Om))$ of $\La(\Om)$, $X(\La(\Om)) = \overline{\{z+\La(\Om):z\in\Rtd\}}$, (\cite{M05},\cite{Sch00}).

\begin{proposition}\cite{Sch00}
Let $\La(\Om)$ be a model set. There is a continuous mapping 
\begin{equation*}
\beta :\, X(\La(\Om)) \rightarrow \T\,,
\end{equation*}
called the torus parametrization, such that $(i)$ $\beta$ is onto; $(ii)$ $\beta$ is injective almost everywhere with respect to the Haar measure $\theta$; $(iii)$ $\beta(z+\La') = z+ \beta(\La')$ for all $z\in\Rtd$ and all $\La'\in X(\La)$; and $(iv)$ $\beta(z+\La(\Om)) = (z,0)_L$ for all $z\in\Rtd$.
\end{proposition}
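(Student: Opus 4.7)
The plan is to construct $\beta$ first on the dense orbit $\mathcal{O}=\{z+\La(\Om):z\in\Rtd\}\subseteq X(\La(\Om))$ via the formula $\beta(z+\La(\Om)):=(z,0)_L$ forced by $(iv)$, and then extend by uniform continuity, using that $X(\La(\Om))$ is compact (the local-topology closure of a family of uniformly discrete, relatively dense sets of equal separation) and that $\T$ is Hausdorff. Property $(iv)$ is built into the definition, $\Rtd$-equivariance $(iii)$ on $\mathcal{O}$ is immediate from $(z+z')+\La(\Om)\mapsto (z+z',0)_L=z+(z',0)_L$, and surjectivity $(i)$ will follow because $\beta(\mathcal{O})=\{(z,0)_L:z\in\Rtd\}$ is dense in $\T$ (since $p_2(\Ga)$ is dense in $\R$) while $\beta(X(\La(\Om)))$ is closed by compactness.

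\textbf{Well-definedness and uniform continuity on $\mathcal{O}$} (the main obstacle). If $z_1+\La(\Om)=z_2+\La(\Om)$, then to every $\la=p_1(\ga)\in\La(\Om)$ corresponds $\la'=p_1(\ga')\in\La(\Om)$ with $z_1+\la=z_2+\la'$; injectivity of $p_1|_{\Ga}$ makes $\delta:=\ga'-\ga\in\Ga$ independent of $\la$, with $p_1(\delta)=z_1-z_2$ and $p_2(\ga)+p_2(\delta)=p_2(\ga')\in\Om$ for every matched $\ga$. Since the values $p_2(\ga)$ with $p_2(\ga)\in\Om$ are dense in $\Om$, this forces $\Om+p_2(\delta)\subseteq\Om$, and then compactness of $\Om$ yields $p_2(\delta)=0$, so $\delta=(z_1-z_2,0)\in\Ga$ and $(z_1,0)_L=(z_2,0)_L$. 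For uniform continuity, suppose the translates are $(R,\epsilon)$-close: $(z_1+v+\La(\Om))\cap B(0,R)=(z_2+\La(\Om))\cap B(0,R)$ for some $v\in B(0,\epsilon)$. The same matching argument produces a single $\delta\in\Ga$ with $p_1(\delta)=z_1-z_2+v$ and $p_2(\ga)+p_2(\delta)\in\Om$ for every $\ga$ matched inside the ball. As $R\to\infty$ and $\epsilon\to 0$ these matched $p_2(\ga)$ approximate $\Om$ with vanishing discrepancy, and the measure-zero boundary hypothesis on $\Om$ forces $p_2(\delta)\to 0$. Writing
\begin{equation*}
(z_1-z_2,0)=\delta+(-v,-p_2(\delta)),
\end{equation*}
with $\delta\in\Ga$ and $(-v,-p_2(\delta))$ arbitrarily small in $\Rtd\times\R$, we conclude that $(z_1,0)_L$ and $(z_2,0)_L$ become arbitrarily close in $\T$.

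\textbf{Extension and remaining properties.} Since $X(\La(\Om))$ is compact (hence complete in its uniform structure), the uniformly continuous map $\beta|_{\mathcal{O}}$ extends uniquely to a continuous $\beta:X(\La(\Om))\to\T$; $(i)$ holds as noted, and $(iii)$, $(iv)$ pass from $\mathcal{O}$ to the closure by continuity of both sides. For $(ii)$, the fibre $\beta^{-1}((s,t)_L)$ always contains $s+\La(\Om-t)$, and whenever $t+p_2(\Ga)$ does not meet $\partial\Om$ this model set is the unique preimage, because any other element of the fibre would arise from an ambiguity in matching a boundary lattice point to $\Om$ or its complement. The exceptional set in $\T$ is the $\Ga$-saturation of the $\theta$-null slice $\Rtd\times\partial\Om$, hence still $\theta$-null, so $\beta$ is injective $\theta$-almost everywhere. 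As signaled, the single hard point is the matching step: converting local-topology closeness of two translates into smallness of the internal coordinate $p_2(\delta)$, where the measure-zero boundary hypothesis on $\Om$, injectivity of $p_1|_{\Ga}$, and density of $p_2(\Ga)$ in $\R$ all play a role.
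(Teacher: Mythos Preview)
The paper does not supply a proof of this proposition: it is quoted from Schlottmann \cite{Sch00} as a known structural result about the hull of a model set, with no argument given. So there is nothing in the paper to compare your attempt against.

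That said, your outline is the standard construction and is correct in its architecture: define $\beta$ on the dense orbit via the formula forced by $(iv)$, verify uniform continuity for the local topology, extend by compactness of the hull, and deduce $(i)$ and $(iii)$ from density and continuity. You correctly flag the one delicate step, namely turning $(R,\epsilon)$-closeness of two translates into smallness of the internal coordinate $p_2(\delta)$. Your justification there (``the matched $p_2(\ga)$ approximate $\Om$ with vanishing discrepancy as $R\to\infty$'') is precisely the uniform-distribution theorem for the star-map image of a model set in its window; this is true but is itself a nontrivial equidistribution result and in a complete proof would need to be cited rather than asserted. Similarly, your argument for $(ii)$ compresses the standard fibre analysis: one shows that any $\La'\in\beta^{-1}((s,t)_L)$ is sandwiched between $s+\La(\mathrm{int}(\Om)-t)$ and $s+\La(\overline{\Om}-t)$, and these coincide exactly when $t+p_2(\Ga)$ avoids $\partial\Om$. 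Both points are handled in Schlottmann's original; your sketch captures the ideas but would need those references or a few more lines to stand on its own.
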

By injective almost everywhere, we mean that the set $P$ of points $z\in\T$, for which there is more than one point set of $X(\la(\Om))$ over $z$, satisfies $\theta(P)=0$.

There is a unique $\Rtd$-invariant measure $\mu$ on $X(\La(\Om))$, with $\mu(X(\La(\Om))) = 1$, and $\beta$ relates the Haar measure $\theta$ and $\mu$ through: $\theta(P) = \mu(\beta^{-1} P)$ for all measurable subsets $P$ of $\T$. Having $\mu$ we can introduce the space $\Lt(X(\La(\Om)),\mu)$ of square integrable functions on $X(\La(\Om))$. Square integrable functions on $X(\La(\Om))$ and square integrable functions on $\T$ can be identified,
\begin{equation}\label{eq:iso}
\Lt(X(\La(\Om)),\mu) \simeq \Lt(\T,\theta)\,.
\end{equation}
The mapping takes a function $\mathfrak{N}\in \Lt(\T,\theta)$ and creates $\widetilde{\mathcal{N}}=\mathfrak{N} \circ \beta \in \Lt(X(\La(\Om)),\mu)$, and since $\beta$ is almost everywhere injective, the map is a bijection. This allows us to analyze functions on $X(\La(\Om))$ by treating them as functions on $\T$.

Consider a function $\widetilde{\mathcal{N}}:X(\La(\Om)) \rightarrow \C$. We can define from it a function $\mathcal{N}:\Rtd \rightarrow \C$ by 
\begin{equation*}
\mathcal{N}(z) = \widetilde{\mathcal{N}}(z+\La(\Om))\,.
\end{equation*}
If $\widetilde{\mathcal{N}}$ is continuous, then for all $z_1,z_2\in\Rtd$, if $z_1+\La(\Om)$ and $z_2+\La(\Om)$ are close, then $\widetilde{\mathcal{N}}(z_1+\La(\Om))$ and $\widetilde{\mathcal{N}}(z_2+\La(\Om))$ are close, and as a consequence, $\mathcal{N}(z_1)$ and $\mathcal{N}(z_2)$ are close. Thus continuity of $\widetilde{\mathcal{N}}$ implies continuity of $\mathcal{N}$, or a certain locality. More formally, a function $\mathcal{N}:\Rtd\rightarrow \C$ is called {\it local with respect to} $\La(\Om)$, if for all $\delta>0$ there exists $R>0$ and $\epsilon>0$ so that whenever $z_1+\La(\Om)$ and $z_2+\La(\Om)$, for $z_1,z_2\in\Rtd$, are $(R,\epsilon)$-close, then
\begin{equation*}
\abs{\mathcal{N}(z_1)-\mathcal{N}(z_2)} < \delta\,.
\end{equation*}  
Intuitively, $\mathcal{N}$ looks very much the same at places where the local environment looks the same. It can be easily verified that local functions are continuous on $\Rtd$ and almost periodic.

Using locality, we can go in the opposite direction. Let $\mathcal{N}$ be a local function with respect to $\La(\Om)$. Define a function $\widetilde{\mathcal{N}}$ on the orbit of $\La(\Om)$:
\begin{equation*}
\widetilde{\mathcal{N}}:\{ z+\La(\Om):z\in\Rtd\} \rightarrow \C \quad \mbox{by} \quad \widetilde{\mathcal{N}}(z+\La(\Om)) = \mathcal{N}(z)\,.
\end{equation*}
Then $\widetilde{\mathcal{N}}$ is uniformly continuous on $\{ z+\La(\Om):z\in\Rtd\}$ with respect to the local topology. The reason for this is that the continuity condition which defines the localness of $\mathcal{N}$ is based on the uniformity defining the local topology on $\{ z+\La(\Om):z\in\Rtd\}$. It follows that $\widetilde{\mathcal{N}}$ lifts uniquely to a continuous function on a local hull $X(\La(\Om))$.

\begin{proposition}\cite{MNP08}
For each local function $\mathcal{N}$ with respect to $\La(\Om)$ there is a unique continuous function $\widetilde{\mathcal{N}}$ on a local hull $X(\La(\Om))$, whose restriction to the orbit of $\La(\Om)$ is $\mathcal{N}$. Every continuous function on the local hull of $\La(\Om)$ arises in this way.
\end{proposition}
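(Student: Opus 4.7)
The plan is to handle the two assertions in turn, using in each direction the density of the orbit $\{z+\La(\Om):z\in\Rtd\}$ in the local hull $X(\La(\Om))$, together with the observation that the continuity condition defining locality of $\mathcal{N}$ is precisely the restriction to the orbit of the uniform structure generating the local topology on $X(\La(\Om))$.

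For the existence and uniqueness of the lift, given a local $\mathcal{N}$ I would first set $\widetilde{\mathcal{N}}(z+\La(\Om)) := \mathcal{N}(z)$ on the orbit and check that this is unambiguous: if $z_1+\La(\Om) = z_2+\La(\Om)$, then the pair $(z_1+\La(\Om),z_2+\La(\Om))$ lies in every entourage $U(\epsilon,B(0,R))$, so locality forces $|\mathcal{N}(z_1)-\mathcal{N}(z_2)|<\delta$ for every $\delta>0$, hence equality. The locality condition on $\mathcal{N}$ is then literally the statement that $\widetilde{\mathcal{N}}$ is uniformly continuous on the orbit with respect to the trace of the local uniformity. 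Since $X(\La(\Om))$ is the closure of this orbit and, by the uniform discreteness and relative denseness of $\La(\Om)$, is compact (and therefore complete) in the local topology, the classical extension theorem for uniformly continuous functions on a dense subset of a compact Hausdorff uniform space provides a unique continuous extension $\widetilde{\mathcal{N}}$ to all of $X(\La(\Om))$.

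For the converse, given a continuous $\widetilde{\mathcal{N}} : X(\La(\Om)) \to \C$, I would define $\mathcal{N}(z) := \widetilde{\mathcal{N}}(z+\La(\Om))$ and verify that $\mathcal{N}$ is local. Compactness of $X(\La(\Om))$ upgrades continuity of $\widetilde{\mathcal{N}}$ to uniform continuity automatically; given $\delta>0$, uniform continuity produces some basic entourage $U(\epsilon,B(0,R))$ such that $(\La_1,\La_2)\in U(\epsilon,B(0,R))$ implies $|\widetilde{\mathcal{N}}(\La_1)-\widetilde{\mathcal{N}}(\La_2)|<\delta$. Specializing to $\La_i = z_i+\La(\Om)$ gives precisely the locality condition for $\mathcal{N}$, and by construction the lift of this $\mathcal{N}$ agrees with the original $\widetilde{\mathcal{N}}$ on the dense orbit, hence everywhere on $X(\La(\Om))$ by the uniqueness already established.

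The only technical ingredient that I would invoke rather than reprove is compactness of $X(\La(\Om))$ in the local topology, which is a standard fact for uniformly discrete, relatively dense point sets and is available from the references \cite{Sch00}, \cite{M05} already cited above. Once this is granted, the main obstacle is just bookkeeping: identifying the locality condition of the definition with the trace of the fundamental system of entourages $U(\epsilon,B(0,R))$, after which both halves of the proposition reduce cleanly to the density/extension principle for uniformly continuous functions on a dense subset of a compact Hausdorff uniform space.
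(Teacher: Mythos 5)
Your argument is correct and follows essentially the same route the paper itself indicates: the paper cites this result from \cite{MNP08} and its surrounding discussion sketches exactly your mechanism, namely that locality of $\mathcal{N}$ is the trace on the orbit of the uniformity generating the local topology, so $\widetilde{\mathcal{N}}$ is uniformly continuous on the dense orbit and extends uniquely to the (compact) hull, with the converse obtained by restricting a (uniformly) continuous function on $X(\La(\Om))$ back to the orbit. Your additional checks (well-definedness when $z_1+\La(\Om)=z_2+\La(\Om)$, compactness of the hull from \cite{Sch00,M05}) are the right details to supply and introduce no gap.
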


The spectral theory of $\Lt(X(\La(\Om)),\mu)$ allows us to analyze $\mathcal{N}$ by analyzing its corresponding function $\widetilde{\mathcal{N}}$ on $\Lt(X(\La(\Om)),\mu)$. Suppose $\mathcal{N}$ is a local function with respect to the model set $\La(\Om)$. From the locality of $\mathcal{N}$ we have its extension $\widetilde{\mathcal{N}}\in\Lt(X(\La(\Om)),\mu)$ which is continuous. Then we obtain $\mathfrak{N}\in \Lt(\T,\theta)$, where
\begin{equation*}
\mathfrak{N}((z,0)_L) = \mathfrak{N}(\beta(z+\La(\Om))) = \widetilde{\mathcal{N}}(z+\La(\Om)) = \mathcal{N}(z)\,,
\end{equation*}
and since functions in $\Lt(\T,\theta)$ have Fourier expansions, we can write
\begin{equation}\label{eq:Fourier series N}
\mathcal{N}(z) = \widetilde{\mathcal{N}}(z+\La(\Om)) = \mathfrak{N}((z,0)_L) = \sum_{\eta\in\Ga^*} \widehat{\mathfrak{N}}(\eta) e^{-2\pi i (z,0) \cdot \eta} = \sum_{\eta\in\Ga^*} \widehat{\mathfrak{N}}(\eta) e^{-2\pi i z\cdot p_1^*(\eta)}\,,
\end{equation}
almost everywhere, with 
\begin{equation*}
\widehat{\mathfrak{N}}(\eta) = \int_{\T} \mathfrak{N}((s,t)_L) e^{2\pi i (s,t) \cdot \eta}\, d\theta(s,t)\,.
\end{equation*}
However, we know $\mathfrak{N}$ only on $(\Rtd,0)_L$. To compute the coefficients $\widehat{\mathfrak{N}}(\eta)$ out of $\mathcal{N}$ alone, we can use the Birkhoff ergodic theorem
\begin{align*}
\widehat{\mathfrak{N}}(\eta)  &= \int_{\T} \mathfrak{N}((s,t)_L) e^{2\pi i (s,t) \cdot \eta}\, d\theta(s,t)= \lim_{R\rightarrow \infty} \frac{1}{R^{2d}} \int_{B(0,R)} \mathfrak{N}((z,0)_L) e^{2\pi i (z,0) \cdot \eta}\, dz\\
&= \lim_{R\rightarrow \infty} \frac{1}{R^{2d}} \int_{B(0,R)} \mathcal{N}(z) e^{2\pi i z \cdot p_1^*(\eta)}\, dz\,,
\end{align*}
where we used $\mathfrak{N}((z,0)_L) = \mathcal{N}(z)$ and $\eta = (p_1^*(\eta),p_2^*(\eta))$, so
\begin{equation*}
(z,0)\cdot \eta = z \cdot p_1^*(\eta) + 0 \cdot p_2^*(\eta)\,.
\end{equation*}
If $ \sum_{\eta\in\Ga^*} \abs{\widehat{\mathfrak{N}}(\eta)}<\infty$, then the Fourier series \eqref{eq:Fourier series N} converges absolutely to $\mathcal{N}(z)$ for all $z\in\Rtd$.


\section{Bracket product on model sets}\label{sec:bracket}

As described in the introduction, we are interested in the charaterization of tight and dual Gabor frames for simple model sets. We are going to imitate the approach presented in Section~\ref{sec:gabor frames lattice} for model sets, and like in the previous Section, the Poisson summation formula will play a crucial role.

We assume from now on that $\Om$ is symmetric around the origin and that $p_2(\Ga)$ and $p_2^*(\Ga^\ast)$ have no common points with the boundary of $\Om$. Let $\La(\Om)$ be a simple model set and $\psi\in C_0^\infty(\Om)$. Let $\widetilde{w}_\psi$ be a function defined as in Theorem~\ref{thm:poisson_MS}. Then for a fixed $z=(x,\om)\in\Rtd$ the generalized $\psi${\it -bracket product} of $f$ and $g$ is defined as
\begin{equation}\label{def:bracket}
\big [\widehat{\pi(z)f},\widehat{g}\big ]_{\La(\Om)}^{\psi} (\tilde{z}) := \sum_{\ga^\ast \in \Ga^\ast} \widetilde{w}_\psi(-p^*_2(\ga^\ast)) M_{-z}\W(\widehat{f}, \widehat{g})(\tilde{z} - p^*_1(\ga^\ast))\,.
\end{equation}
For $f,g\in M^1(\R^d)$, we have $\W(\widehat{f},\widehat{g}) \in M^1(\Rtd)$ and the bracket product is well defined. Moreover, $\F^{-1} M_{-z}\W(\widehat{f}, \widehat{g}) = T_z \A(f,g)$ and is also an element of $M^1(\Rtd)$, and by Remark~\ref{rem:a.p. construction}, $\big [\widehat{\pi(z)f},\widehat{g}\big ]_{\La(\Om)}^{\psi}$ is an almost periodic function represented by the trigonometric series
\begin{equation*}
\big [\widehat{\pi(z)f},\widehat{g}\big ]_{\La(\Om)}^{\psi}(\tilde{z}) = \sum_{\la\in\La} w_\psi(\la) \A(f,g)(\la-z) e^{-2\pi i \la \cdot \tilde{z}}\,.
\end{equation*}
The Fourier coefficients are given by
\begin{equation}\label{eq:Fourier coeff}
\M_{\tilde{z}}\Big \{\big [\widehat{\pi(z)f},\widehat{g}\big ]_{\La(\Om)}^{\psi}(\tilde{z}) e^{2\pi i \la \cdot \tilde{z}} \Big \} = w_{\psi}(\la) \A(f,g)(\la - z)\,.
\end{equation}

We make the following useful observation that is in analogy with regular shifts.

\begin{lemma}\label{lem:lemma2}
Let $\La(\Om)$ be a simple model set and $\psi\in C_0^\infty(\Om)$. For all functions $f_1,f_2,g,h\in M^1(\R^d)$, we have
\begin{equation*}
\sum_{\la\in\La(\Om)} w_\psi(\la)^2 \, \A(f_1,g)(\la-z)\overline{\A(f_2,h)(\la-z)} = \M\Big\{ \big [\widehat{\pi(z)f_1},\widehat{g}\big ]_{\La(\Om)}^{\psi} \cdot \overline{\big [\widehat{\pi(z)f_2},\widehat{h}\big ]_{\La(\Om)}^{\psi} }\Big\}\,.
\end{equation*}
\end{lemma}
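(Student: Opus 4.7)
The plan is to apply Plancherel's theorem for almost periodic functions (Theorem~4.1~$ii)$) to the two bracket products appearing on the right-hand side, after identifying their Bohr spectra and Fourier coefficients.

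First, I would note that for fixed $z\in\Rtd$ and any $f,g\in M^1(\R^d)$, the bracket product $\big[\widehat{\pi(z)f},\widehat{g}\big]_{\La(\Om)}^{\psi}$ is almost periodic in $\tilde{z}$. This follows from Remark~\ref{rem:a.p. construction}, because the defining series \eqref{def:bracket} is, by the Poisson summation formula for model sets (Theorem~\ref{thm:poisson_MS}) applied to $F = \F^{-1}M_{-z}\W(\hat f,\hat g) = T_z\A(f,g) \in M^1(\Rtd)$, an absolutely and uniformly convergent trigonometric series. The trigonometric series representation displayed immediately after \eqref{def:bracket} then identifies its Bohr spectrum as a subset of $\La(\Om)$, with Fourier coefficients $w_\psi(\la)\A(f,g)(\la-z)$, as already recorded in \eqref{eq:Fourier coeff}.

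Second, I would verify that Plancherel's theorem applies by checking that these coefficients are square summable. Since $f_1,f_2,g,h\in M^1(\R^d)$, Proposition~\ref{prop:ambiguity} yields $\A(f_1,g),\A(f_2,h)\in \WCl(\Rtd)$. Restricting to the relatively separated set $\La(\Om)$, the norm estimate following Proposition~2.1 (stated in Section~\ref{sec:not}) gives $(\A(f_i,g_i)(\la-z))_{\la\in\La(\Om)}\in \lo(\La(\Om))\subset \lt(\La(\Om))$. Since $w_\psi$ is bounded (as $\psi\in C_0^\infty(\Om)$), the products $w_\psi(\la)\A(f_i,g_i)(\la-z)$ lie in $\lo\cap\lt$. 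Enlarging, if necessary, the individual Bohr spectra to their common superset $\La(\Om)$ (which only adds zero coefficients), Theorem~4.1~$ii)$ yields
\begin{equation*}
\M_{\tilde{z}}\Big\{\big[\widehat{\pi(z)f_1},\widehat{g}\big]_{\La(\Om)}^{\psi}(\tilde{z})\cdot\overline{\big[\widehat{\pi(z)f_2},\widehat{h}\big]_{\La(\Om)}^{\psi}(\tilde{z})}\Big\} = \sum_{\la\in\La(\Om)} w_\psi(\la)\A(f_1,g)(\la-z)\cdot\overline{w_\psi(\la)\A(f_2,h)(\la-z)}.
\end{equation*}

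Finally, because $\psi\in C_0^\infty(\Om)$ is by definition real-valued, the weight $w_\psi(\la)=\psi(p_2(\ga))$ is also real, so $\overline{w_\psi(\la)}=w_\psi(\la)$ and $w_\psi(\la)\cdot\overline{w_\psi(\la)} = w_\psi(\la)^2$. This collapses the right-hand side to $\sum_{\la\in\La(\Om)} w_\psi(\la)^2\,\A(f_1,g)(\la-z)\overline{\A(f_2,h)(\la-z)}$, which is the claim. I do not expect a serious obstacle here: the proof is structurally parallel to Proposition~\ref{prop:Planch} in the lattice case, with Plancherel for Fourier series on $\T_{\La^\ast}$ replaced by Plancherel for almost periodic functions. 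The only delicate point worth making explicit is the $\lt$-summability of the Fourier coefficients, which is what the amalgam-space membership of the cross-ambiguity function (guaranteed by the $M^1$ hypotheses) is used for.
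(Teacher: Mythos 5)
Your argument is correct and follows essentially the same route as the paper's proof: identify the two bracket products as almost periodic functions whose Fourier coefficients are $w_\psi(\la)\A(f_1,g)(\la-z)$ and $w_\psi(\la)\A(f_2,h)(\la-z)$ via \eqref{eq:Fourier coeff}, then apply Plancherel's theorem for almost periodic functions. The additional checks you make explicit (square-summability of the coefficients via the amalgam-space membership of the cross-ambiguity function, matching of the Bohr spectra inside $\La(\Om)$, and realness of $w_\psi$) are details the paper leaves implicit, not a different method.
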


\begin{proof}
The bracket products $\big [\widehat{\pi(z)f_1},\widehat{g}\big ]_{\La(\Om)}^{\psi}$ and $\big [\widehat{\pi(z)f_2},\widehat{h}\big ]_{\La(\Om)}^{\psi}$ are almost periodic function with Fourier coefficients given by $w_{\psi}(\la) \A(f_1,g)(\la - z)$ and $w_{\psi}(\la) \A(f_2,h)(\la - z)$, respectively. Using Plancherel Theorem for Fourier series of almost periodic functions and \eqref{eq:Fourier coeff}, we obtain 
\begin{align*}
\sum_{\la\in\La(\Om)} w_\psi(\la)^2\, &\A(f_1,g)(\la-z)\overline{\A(f_2,h)(\la-z)} \\
&= \sum_{\la\in\La(\Om)} \M_{\tilde{z}}\Big \{\big [\widehat{\pi(z)f_1},\widehat{g}\big ]_{\La(\Om)}^{\psi}(\tilde{z}) e^{2\pi i \la \cdot \tilde{z}} \Big \} \, \overline{\M_{\tilde{z}}\Big \{\big [\widehat{\pi(z)f_2},\widehat{h}\big ]_{\La(\Om)}^{\psi}(\tilde{z}) e^{2\pi i \la \cdot \tilde{z}} \Big \} }\\
&=  \M\Big\{ \big [\widehat{\pi(z)f_1},\widehat{g}\big ]_{\La(\Om)}^{\psi} \cdot \overline{\big [\widehat{\pi(z)f_2},\widehat{h}\big ]_{\La(\Om)}^{\psi} }\Big\}\,.
\end{align*}
\end{proof}

The following result concerning the bracket product will be important in many calculations to follow.
\begin{proposition}\label{prop:ap bracket}
Let $\La(\Om)$ be a simple model set and $\psi\in C_0^\infty(\R)$ be non-negative. Assume that $g,h\in M^1(\R^d)$. Then, for $f_1,f_2\in M^1(\R^d)$,
\begin{equation*}
F(z,\tilde{z}) =  \big [\widehat{\pi(z)f_1},\widehat{g}\big ]_{\La(\Om)}^{\psi}(\tilde{z}) \cdot \overline{\big [\widehat{\pi(z)f_2},\widehat{h}\big ]_{\La(\Om)}^{\psi}(\tilde{z})}, \quad (z,\tilde{z}) \in \Rtd \times \Rtd
\end{equation*}
is an almost periodic function.
\end{proposition}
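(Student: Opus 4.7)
\emph{Plan.} I aim to write $F$ as a uniformly convergent sum of almost periodic functions on $\Rtd\times\Rtd$, and invoke closure of the almost periodic functions under uniform limits. The idea is to group the absolutely convergent double expansion of $F$ according to the difference $\nu=\la-\mu$ and show that each resulting coefficient is itself almost periodic in $z$.

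Unfolding each bracket product through its trigonometric-series form (from the discussion preceding Lemma~\ref{lem:lemma2}), multiplying the two series and collecting terms by $\nu\in L=p_1(\Ga)$, I obtain
\[
F(z,\tilde z)=\sum_{\nu\in L}H_\nu(z)\,e^{-2\pi i\nu\cdot\tilde z},\qquad H_\nu(z)=\sum_{\substack{\la,\mu\in\La(\Om)\\ \la-\mu=\nu}}w_\psi(\la)w_\psi(\mu)\A(f_1,g)(\la-z)\overline{\A(f_2,h)(\mu-z)}.
\]
The rearrangement is justified since $\A(f_i,g_i)\in M^1(\Rtd)\subset\mathbf{W}(C_0,\ell^1)(\Rtd)$ and $\La(\Om)$ is relatively separated, making the double series absolutely and uniformly convergent in $(z,\tilde z)$. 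Setting $\la=\mu+\nu$, using injectivity of $p_1|_\Ga$ to define $\ga_\nu\in\Ga$ by $p_1(\ga_\nu)=\nu$, and noting $w_\psi(\mu+\nu)w_\psi(\mu)=w_{\psi_\nu}(\mu)$ with $\psi_\nu(t):=\psi(t)\psi(t+p_2(\ga_\nu))\in C_0^\infty(\Om)$ (vanishing when $p_2(\ga_\nu)\notin\Om-\Om$), I can recast $H_\nu(z)=\sum_{\mu\in\La(\Om)}w_{\psi_\nu}(\mu)K_\nu(\mu-z)$ for $K_\nu:=T_{-\nu}\A(f_1,g)\cdot\overline{\A(f_2,h)}\in M^1(\Rtd)$, the membership following from the algebra and translation properties of $M^1$. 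Poisson's summation formula for model sets (Theorem~\ref{thm:poisson_MS}) applied to $T_zK_\nu$ yields
\[
H_\nu(z)=\sum_{\ga^\ast\in\Ga^\ast}\widetilde{w}_{\psi_\nu}(-p_2^\ast(\ga^\ast))\,\widehat{K_\nu}(-p_1^\ast(\ga^\ast))\,e^{2\pi iz\cdot p_1^\ast(\ga^\ast)},
\]
which by Remark~\ref{rem:a.p. construction} is an absolutely convergent generalized trigonometric series, so $H_\nu\in AP(\Rtd)$; hence each term $H_\nu(z)e^{-2\pi i\nu\cdot\tilde z}$ is almost periodic on $\Rtd\times\Rtd$ as a product of an almost periodic function of $z$ with a character in $\tilde z$.

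The decisive step is uniform convergence of $\sum_{\nu\in L}H_\nu(z)e^{-2\pi i\nu\cdot\tilde z}$ on $\Rtd\times\Rtd$. From $\|H_\nu\|_\infty\leq\|\psi\|_\infty^2\,\text{rel}(\La(\Om))\,\|K_\nu\|_{\mathbf{W}}$, the task reduces to establishing $\sum_\nu\|K_\nu\|_{\mathbf{W}}<\infty$, where effectively $\nu$ ranges over the relatively separated set $\La(\Om-\Om)$. Using
\[
\|K_\nu\|_{\mathbf{W}}\leq\sum_{k\in\Z^{2d}}\|\A(f_1,g)\|_{L^\infty(Q_k+\nu)}\|\A(f_2,h)\|_{L^\infty(Q_k)},
\]
interchanging the non-negative sums over $\nu$ and $k$, and exploiting that for each fixed $k$ the collection $\{Q_k+\nu\}_\nu$ is relatively separated, one obtains $\sum_\nu\|\A(f_1,g)\|_{L^\infty(Q_k+\nu)}\leq C\|\A(f_1,g)\|_{\mathbf{W}}$ uniformly in $k$, whence $\sum_\nu\|K_\nu\|_{\mathbf{W}}\leq C\|\A(f_1,g)\|_{\mathbf{W}}\|\A(f_2,h)\|_{\mathbf{W}}<\infty$. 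The Weierstrass $M$-test then furnishes uniform convergence, and $F$ appears as a uniform limit of almost periodic functions, hence is itself almost periodic. The main obstacle is precisely the careful bookkeeping in this last summability estimate; once it is secured, the rest of the argument is a straightforward assembly.
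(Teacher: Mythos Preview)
Your argument is correct, and it reaches the conclusion by a route that is dual to the paper's. The paper expands each bracket product through its defining $\Ga^\ast$-series, groups by the difference $\eta=\theta-\mu\in\Ga^\ast$, applies Poisson summation for model sets to the inner $\mu$-sum, and then recognizes the resulting coefficients as
\[
\innerBig{\psi\otimes\A(f_1,g)}{T_\ga M_\eta\big(\psi\otimes\A(f_2,h)\big)}_{\Lt(\R\times\Rtd)},
\]
so that membership in $\lo(\Ga\times\Ga^\ast)$ follows in one stroke from $\psi\otimes\A(f_j,\cdot)\in M^1(\R\times\Rtd)$ and the $\mathbf{W}(C_0,\lo)$ mapping property of the STFT. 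You instead expand via the $\La(\Om)$-trigonometric series, group by $\nu=\la-\mu\in L$, and handle summability by a direct amalgam-space estimate on $K_\nu=T_{-\nu}\A(f_1,g)\cdot\overline{\A(f_2,h)}$. Your bookkeeping is sound: the relevant $\nu$'s lie in the uniformly discrete set $\La(\Om-\Om)$, and your bound $\sum_\nu\|K_\nu\|_{\mathbf{W}}\lesssim\|\A(f_1,g)\|_{\mathbf{W}}\|\A(f_2,h)\|_{\mathbf{W}}$ goes through exactly as you sketch. The paper's tensor-product trick is slicker and, more importantly, delivers the $(\Ga\times\Ga^\ast)$-indexed Fourier expansion of $F$ in a form that feeds directly into the proof of Proposition~\ref{prop:function N}; your approach is more elementary but would require an extra identification step to recover that expansion if you proceed to the next result.
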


\begin{proof}
Let $f_1,f_2\in M^1(\R^d)$. Moreover, for $\eta\in \Ga^\ast$, we define $\Psi_\eta$ such that $\widehat{\Psi_\eta} = \widehat{\psi} \cdot T_{p^*_2(\eta)} \widehat{\psi}$. Then each $\Psi_\eta$ belongs to $C_0^\infty(\R)$ and is compactly supported on $\Om+\Om$, and we can define $\widetilde{w}_{\Psi_\eta}$ as $\widetilde{w}_{\Psi_\eta} (p^*_2(\ga^*))=\text{vol}(\Ga)^{-1} \widehat{\Psi_\eta}(p^*_2(\ga^*))$, for all $\ga^*\in\Ga^*$, as in \eqref{eq:Fourier weight}. Then, by the change of index, we have
\begin{align*}
&F(z,\tilde{z}) =  \sum_{\mu,\theta \in \Ga^\ast} \widetilde{w}_\psi(-p^*_2(\mu) \widetilde{w}_\psi(-p^*_2(\theta)) M_{-z}\W(\widehat{f_1}, \widehat{g})(\tilde{z} - p^*_1(\mu)) \overline{M_{-z}\W(\widehat{f_2}, \widehat{h})(\tilde{z} - p^*_1(\theta))}\\
&=  \sum_{\mu,\theta \in \Ga^\ast} \widetilde{w}_\psi(-p^*_2(\mu)) \widetilde{w}_\psi(-p^*_2(\theta)) \W(\widehat{f_1}, \widehat{g})(\tilde{z} - p^*_1(\mu))  \overline{\W(\widehat{f_2}, \widehat{h})(\tilde{z} - p^*_1(\theta))} e^{-2\pi i (p^*_1(\theta) -  p^*_1(\mu)) \cdot z}\\
&= \sum_{\eta,\mu \in \Ga^\ast} \widetilde{w}_\psi(-p^*_2(\mu)) \widetilde{w}_\psi(-p^*_2(\mu)-p^*_2(\eta)) \W(\widehat{f_1}, \widehat{g})(\tilde{z} - p^*_1(\mu))  \overline{T_{p^*_1(\eta)}\W(\widehat{f_2}, \widehat{h})(\tilde{z} - p^*_1(\mu))} e^{-2\pi i p^*_1(\eta) \cdot z}\\
&= \sum_{\eta,\mu \in \Ga^\ast} \text{vol}^{-2}(\Ga) \widehat{\Psi_\eta}(-p^*_2(\mu)) \W(\widehat{f_1}, \widehat{g})(\tilde{z} - p^*_1(\mu))  \overline{T_{p^*_1(\eta)}\W(\widehat{f_2}, \widehat{h})(\tilde{z} - p^*_1(\mu))} e^{-2\pi i p^*_1(\eta) \cdot z}\\
&=\text{vol}^{-1}(\Ga) \sum_{\eta \in \Ga^\ast} \left [\sum_{\mu \in \Ga^\ast} \widetilde{w}_{\Psi_\eta} (-p^*_2(\mu)) \W(\widehat{f_1}, \widehat{g})(\tilde{z} - p^*_1(\mu))  \overline{T_{p^*_1(\eta)}\W(\widehat{f_2}, \widehat{h})(\tilde{z} - p^*_1(\mu))}\right ] e^{-2\pi i p^*_1(\eta) \cdot z}\,.
\end{align*}
For a fixed $\eta\in\Ga^\ast$, consider the series
\begin{equation*}
F_\eta(\tilde{z}) = \sum_{\mu \in \Ga^\ast} \widetilde{w}_{\Psi_\eta} (-p^*_2(\mu)) \Big( \W(\widehat{f_1}, \widehat{g}) \cdot \overline{T_{p^*_1(\eta)}\W(\widehat{f_2}, \widehat{h})} \Big)(\tilde{z} - p^*_1(\mu))\,.
\end{equation*}
Since $\W(\widehat{f_1}, \widehat{g})$ and $\W(\widehat{f_2}, \widehat{h})$ lie in $M^1(\Rtd)$, their product as well. Moreover, 
\begin{equation*}
\F^{-1}\big (\W(\widehat{f_1}, \widehat{g}) \overline{T_{p^*_1(\eta)}\W(\widehat{f_2}, \widehat{h})}\big ) = \A(f_1,g) \ast (M_{p_1^*(\eta)} \A(f_2,h))^*\, 
\end{equation*}
which is in $M^1(\Rtd)$ since $\A(f_1,g),\A(f_2,h) \in M^1(\Rtd)$. Therefore, by Poisson summation formula for model sets we can write $F_\eta(\tilde{z})$ as
\begin{align*}
F_\eta(\tilde{z}) &= \sum_{\ga\in\Ga} \Psi_\eta(p_2(\ga))\big (\A(f_1,g) \ast  (M_{p_1^*(\eta)} \A(f_2,h))^*\big ) \big (p_1(\ga)\big )\, e^{-2\pi i p_1(\ga) \cdot \tilde{z}}\\
&= \sum_{\ga\in\Ga} \Psi_\eta(p_2(\ga)) \innerBig{\A(f_1,g)}{T_{p_1(\ga)}M_{p_1^*(\eta)} \A(f_2,h)}_{\Lt(\Rtd)}\, e^{-2\pi i p_1(\ga) \cdot \tilde{z}}\,,
\end{align*}
and $F(z,\tilde{z})$ becomes
\begin{align*}
&F(z,\tilde{z}) \\
&= \text{vol}^{-1}(\Ga) \sum_{\eta \in \Ga^\ast} \sum_{\ga\in\Ga} \Psi_\eta(p_2(\ga)) \innerBig{\A(f_1,g)}{T_{p_1(\ga)}M_{p_1^*(\eta)} \A(f_2,h)}_{\Lt(\Rtd)} \, e^{-2\pi i p_1(\ga) \cdot \tilde{z}}\, e^{-2\pi i p^*_1(\eta) \cdot z} \\
&= \text{vol}^{-1}(\Ga) \sum_{\eta \in \Ga^\ast} \sum_{\ga\in\Ga} \inner{\psi}{T_{p_2(\ga)} M_{p_2^*(\eta)} \psi} \innerBig{\A(f_1,g)}{T_{p_1(\ga)}M_{p_1^*(\eta)} \A(f_2,h)}_{\Lt(\Rtd)} \, e^{-2\pi i p_1(\ga) \cdot \tilde{z}}\, e^{-2\pi i p^*_1(\eta) \cdot z} \\
&= \text{vol}^{-1}(\Ga) \sum_{\eta \in \Ga^\ast} \sum_{\ga\in\Ga}  \innerBig{\psi \otimes \A(f_1,g)}{T_\ga M_{\eta} \big (\psi \otimes \A(f_2,h)\big)}_{\Lt(\R\times \Rtd)} \, e^{-2\pi i p_1(\ga) \cdot \tilde{z}}\, e^{-2\pi i p^*_1(\eta) \cdot z}\,.
\end{align*}
The coefficients in the series defining $F(z,\tilde{z})$ are in $\lo(\Ga \times \Ga^\ast)$, because $\psi \otimes \A(f_1,g)$ and $\psi \otimes \A(f_2,h)$ lie in $M^1(\R\times \Rtd)$, and hence 
\begin{equation*}
\V_{\big (\psi \otimes \A(f_2,h)\big)} \big (\psi \otimes \A(f_1,g)\big) \in \mathbf{W}(C_0,\lo)\big((\R\times\Rtd) \times (\R\times\Rtd)\big)\,.
\end{equation*}
That means that $F$ equals a generalized trigonometric polynomial, and therefore is almost periodic.
\end{proof}

We need one more result that will be an important tool in the characterization of tight frames and dual frames.

\begin{proposition}\label{prop:function N}
Let $\La(\Om)$ be a simple model set and $\psi\in C_0^\infty(\Om)$ non-negative function. Assume that $g_i,h_i\in M^1(\R^d)$ for every $i=1,\ldots,M$. Then, for every $f_1,f_2\in M^1(\R^d)$, the function
\begin{equation}\label{eq:function G}
\mathcal{N}^\psi(z) = \sum_{i=1}^M \sum_{\la\in\La(\Om)} w_\psi(\la)^2 \, \inner{\pi(z) f_1}{\pi(\la) g_i} \inner{\pi(\la) h_i}{\pi(z) f_2}
\end{equation}
is almost periodic and coincides pointwise with its Fourier series $\sum_{\eta \in \Ga^\ast} \widehat{\mathcal{N}^\psi}(\eta) e^{-2\pi i p^*_1(\eta) \cdot z}$, with
\begin{equation*}
\widehat{\mathcal{N}^\psi}(\eta) = \text{vol}(\Ga)^{-1} \, \widehat{\psi^2}(-p_2^*(\eta))\, \innerBig{\pi\big (J p_1^*(\eta)\big) f_1}{f_2}\,\sum_{i=1}^M \innerBig{h_i}{\pi\big(J p_1^*(\eta)\big) g_i}\,,
\end{equation*}
where $\eta\in \Ga^\ast$ and $J$ is the symplectic matrix.
\end{proposition}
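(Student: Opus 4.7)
The plan is to reduce Proposition 5.5 to Proposition 5.2 and Lemma 5.1 in a way that closely mirrors the proof of Proposition 3.3 (the lattice version). The advantage is that all the ``heavy lifting'' — verifying that a certain product of bracket products is almost periodic together with an explicit trigonometric expansion — has already been done in Proposition 5.2; here we only need to take the mean in one variable and simplify.

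First, I would rewrite the summand of $\mathcal{N}^\psi(z)$ using short-time Fourier transforms and then Proposition 2.1(b), (d). Since $\V_g f(\la)=\inner{f}{\pi(\la)g}$ and $\A(f,g)(\la)=e^{\pi i x\cdot\om}\V_g f(\la)$, the phases in $\A(\pi(z)f_1,g_i)(\la)\overline{\A(\pi(z)f_2,h_i)(\la)}$ cancel in the product, and the translation phase from Proposition 2.1(d) cancels as well. Hence
\begin{equation*}
\mathcal{N}^\psi(z)=\sum_{i=1}^M\sum_{\la\in\La(\Om)} w_\psi(\la)^2\,\A(f_1,g_i)(\la-z)\,\overline{\A(f_2,h_i)(\la-z)}.
\end{equation*}
Applying Lemma 5.1 term by term, this equals $\sum_{i=1}^M \M_{\tilde z}\{F_i(z,\tilde z)\}$, where
\begin{equation*}
F_i(z,\tilde z)=\big[\widehat{\pi(z)f_1},\widehat{g_i}\big]^\psi_{\La(\Om)}(\tilde z)\;\overline{\big[\widehat{\pi(z)f_2},\widehat{h_i}\big]^\psi_{\La(\Om)}(\tilde z)}.
\end{equation*}

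Next, I invoke Proposition 5.2, which gives each $F_i$ as an absolutely convergent trigonometric series indexed by $(\ga,\eta)\in\Ga\times\Ga^\ast$ with coefficients belonging to $\lo(\Ga\times\Ga^\ast)$. Taking $\M_{\tilde z}$ picks out exactly those terms for which the $\tilde z$-frequency $p_1(\ga)$ vanishes. Since $p_1|_\Ga$ is injective, this means only $\ga=0$ survives, producing
\begin{equation*}
\mathcal{N}^\psi(z)=\text{vol}(\Ga)^{-1}\sum_{\eta\in\Ga^\ast}\inner{\psi}{M_{p_2^*(\eta)}\psi}\;\sum_{i=1}^M\inner{\A(f_1,g_i)}{M_{p_1^*(\eta)}\A(f_2,h_i)}\,e^{-2\pi i p_1^*(\eta)\cdot z}.
\end{equation*}
The coefficient $\inner{\psi}{M_{p_2^*(\eta)}\psi}$ simplifies directly to $\widehat{\psi^2}(-p_2^*(\eta))$ (using that $\psi$ is real and re-indexing $\eta\mapsto -\eta$ if necessary, which is allowed because $\Ga^\ast$ is a group).

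To simplify the second inner product, I use Parseval together with Proposition 2.1(e) and Proposition 2.2(c), which give $\F\A(f,g)=\W(\widehat f,\widehat g)$. This converts
\begin{equation*}
\inner{\A(f_1,g_i)}{M_{p_1^*(\eta)}\A(f_2,h_i)}=\inner{\W(\widehat{f_1},\widehat{g_i})}{T_{p_1^*(\eta)}\W(\widehat{f_2},\widehat{h_i})}.
\end{equation*}
This is exactly the expression handled in the proof of Proposition 3.3: splitting the translate symmetrically, applying Proposition 2.2(e), and invoking Moyal's formula yields, with $\la^\circ=Jp_1^*(\eta)$,
\begin{equation*}
\inner{\W(\widehat{f_1},\widehat{g_i})}{T_{p_1^*(\eta)}\W(\widehat{f_2},\widehat{h_i})}=\inner{\pi(\la^\circ)f_1}{f_2}\,\inner{h_i}{\pi(\la^\circ)g_i}.
\end{equation*}
Substituting back produces exactly the asserted Fourier coefficients $\widehat{\mathcal{N}^\psi}(\eta)$.

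Finally, I must verify almost periodicity and pointwise equality with the Fourier series. The trigonometric series in $z$ is absolutely convergent: the coefficients involve $\widehat{\psi^2}(-p_2^*(\eta))$ — which decays rapidly in $\eta$ because $\psi\in C_0^\infty(\R)$ — combined with the globally bounded factors $\inner{\pi(Jp_1^*(\eta))f_1}{f_2}$ and $\inner{h_i}{\pi(Jp_1^*(\eta))g_i}$, and summability follows from $\V_{f_2}f_1, \V_{g_i}h_i\in \mathbf{W}(C_0,\lo)(\Rtd)$ (Proposition 2.1) evaluated along the relatively separated set $Jp_1^*(\Ga^\ast)$. Absolute convergence gives both almost periodicity of $\mathcal{N}^\psi$ and pointwise identity with its Fourier expansion. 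The main subtlety, and the step that requires the most care, is justifying the interchange of $\M_{\tilde z}$ with the double series from Proposition 5.2 — this is where the $\lo(\Ga\times\Ga^\ast)$ summability of the coefficients established inside that proposition is essential, via Fubini for the Bohr mean acting on an absolutely convergent trigonometric polynomial.
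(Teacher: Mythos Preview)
Your proposal is correct and follows essentially the same route as the paper's proof: rewrite $\mathcal{N}^\psi$ via cross-ambiguity functions, apply Lemma~\ref{lem:lemma2} to pass to the mean of the product of bracket products, invoke Proposition~\ref{prop:ap bracket} for the absolutely convergent double trigonometric expansion, extract the $\ga=0$ term via $\M_{\tilde z}$, and simplify the resulting coefficient using the Wigner--Moyal computation from the proof of Proposition~\ref{prop:function N lattice}. The paper phrases the mean-extraction step as evaluating the auxiliary functions $F_\eta$ and $\Psi_\eta$ from the proof of Proposition~\ref{prop:ap bracket} at $\ga=0$, but this is exactly your ``only $\ga=0$ survives'' observation, and your explicit remark about the $\ell^1(\Ga\times\Ga^\ast)$ summability justifying the interchange of $\M_{\tilde z}$ with the series makes rigorous a step the paper leaves implicit.
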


\begin{proof}
Let $f_1,f_2 \in M^1(\R^d)$. Using the relationship between short time Fourier transform and cross-ambiguity function, we can express $\mathcal{N}^{\psi}$ as
\begin{align*}
\mathcal{N}^\psi (z) &= \sum_{i=1}^M \sum_{\la\in\La(\Om)} w_\psi(\la)^2\, \inner{\pi(z) f_1}{\pi(\la) g_i} \inner{\pi(\la) h_i}{\pi(z) f_2}\\
&= \sum_{i=1}^M \sum_{\la\in\La(\Om)} w_\psi(\la)^2\, \A(f_1,g_i)(\la-z) \overline{\A(f_2,h_i)(\la-z)}\,.
\end{align*}
For $i=1,\ldots,M$ fixed, let
\begin{equation*}
\mathcal{N}^\psi_i (z) = \sum_{\la\in\La(\Om)} w_\psi(\la)^2\, \A(f_1,g_i)(\la-z) \overline{\A(f_2,h_i)(\la-z)}\,.
\end{equation*}
By Lemma~\ref{lem:lemma2}, we have
\begin{equation*}
\mathcal{N}^\psi_i(z) = \M_{\tilde{z}}\Big\{ \big [\widehat{\pi(z)f_1},\widehat{g_i}\big ]_{\La(\Om)}^{\psi}(\tilde{z}) \cdot \overline{\big [\widehat{\pi(z)f_2},\widehat{h_i}\big ]_{\La(\Om)}^{\psi}(\tilde{z}) }\Big\}\,.\,,
\end{equation*}
and by Proposition~\ref{prop:ap bracket} and properties of almost periodic functions of two variables, $\mathcal{N}^\psi_i(z)$ is almost periodic. Moreover, using functions $F_\eta$ and $\Psi_\eta$ defined in the proof of Proposition~\ref{prop:ap bracket}, we have
\begin{align*}
\mathcal{N}^\psi_i(z) &= \text{vol}(\Ga)^{-1} \sum_{\eta\in \Ga^*} \M_{\tilde{z}} \Big \{ F_\eta(\tilde{z})\Big\} e^{-2\pi i z \cdot p_1^*( \eta) } \\
&=  \text{vol}(\Ga)^{-1} \sum_{\eta\in \Ga^*} \Psi_\eta(0) \innerBig{\A(f_1,g_i)}{M_{p_1^*(\eta)} \A(f_2,h_i)}_{\Lt(\Rtd)}\, e^{-2\pi i z \cdot p_1^*(\eta)} \\
&= \text{vol}(\Ga)^{-1} \sum_{\eta\in \Ga^*} \widehat{\psi^2}(-p_2^*(\eta))\, \innerBig{\pi\big (J p_1^*(\eta)\big) f_1}{f_2}\,\innerBig{h_i}{\pi\big(J p_1^*(\eta)\big) g_i}\, e^{-2\pi i z \cdot p_1^*(\eta)} \,,
\end{align*}
where the last equality follows from relations between cross-ambiguity function, cross-Wigner distribution and Moyal's formula, as in the proof of Proposition~\ref{prop:function N lattice}.

Now, since $\mathcal{N}$ is a finite sum of almost periodic functions, it is almost periodic and it equals a generalized trigonometric series
\begin{equation}\label{eq:series}
\mathcal{N}^\psi(z) = \text{vol}(\Ga)^{-1} \sum_{\eta\in \Ga^*}\left ( \widehat{\psi^2}(-p_2^*(\eta))\, \innerBig{\pi\big (J p_1^*(\eta)\big) f_1}{f_2}\,\sum_{i=1}^M \innerBig{h_i}{\pi\big(J p_1^*(\eta)\big) g_i} \right )\, e^{-2\pi i z \cdot p_1^*(\eta)}\,. 
\end{equation}
By the uniqueness of the Fourier series, \eqref{eq:series} is the Fourier series of $\mathcal{N}^\psi(z)$.
\end{proof}


\section{Gabor Analysis for Model Sets }\label{sec:main}

We first begin with weighted Gabor frames and characterize normalized tight and dual weighted Gabor frames. The characterization follows directly from the bracket product defined in the previous section. Next, we move to the non-weighted scenario, where we develop Fundamental Identity of Gabor Analysis for model sets, Janssen representation and Wexler-Raz orthogonality relations.


\subsection{Weighted Gabor Systems}

Let $\La(\Om)$ be a simple model set, $\psi\in C_0^\infty(\Om)$ non-negative function and the windows $g_1,\ldots, g_M\in M^1(\R^d)$. Then, a {\it weighted Gabor system}, denoted by $\G_\psi(g_1,\ldots,g_m;\La(\Om))$, is a collection of elements $w_\psi(\la) \pi(\la) g_i$, where $\la\in\La(\Om)$, $i=1,\ldots, M$ and $w_\psi$ as defined in \eqref{def:w_psi}. The system $\G_\psi(g_1,\ldots,g_m;\La(\Om))$ is a frame for $\Lt(\R^d)$ if there exist constants $A_g,B_g > 0$ such that
\begin{equation}\label{eq:weighted Gabor}
A_g \norm{f}_2^2 \leq \sum_{i=1}^M \sum_{\la\in\La(\Om)} w_\psi(\la)^2 \, \abs{\inner{f}{\pi(\la) g_i}}^2 \leq B_g \norm{f}_2^2
\end{equation}
holds for all $f\in\Lt(\R^d)$. $\G_\psi(g_1,\ldots,g_m;\La(\Om))$ is a {\it normalized weighted tight Gabor frame} if $A_g=B_g=1$. As in the case of non-weighted Gabor systems, $\G_\psi(g_1,\ldots,g_m;\La(\Om))$ is a Bessel sequence (only the right hand side of \eqref{eq:weighted Gabor} holds) if all $g_i\in M^1(\R^d)$, $i=1,\ldots,M$. 
Equipped with this notions, we can now characterize weighted tight Gabor frames.

\begin{proposition}\label{thm:tight frame1}
Let $\La(\Om)$ be a simple model set and $\psi\in C_0^\infty(\Om)$ non-negative function. Then the family $\G_\psi(g_1,\ldots,g_m;\La(\Om))$, with $g_i\in M^1(\R^d)$ for every $i=1,\ldots, M$, is a normalized weighted tight Gabor frame for $\Lt(\R^m)$, that is 
\begin{equation}\label{eq:tight1}
\sum_{i=1}^M \sum_{\la\in\La(\Om)} w_\psi(\la)^2 \, \abs{\inner{f}{\pi(\la) g_i}}^2 = \norm{f}_2^2 \quad \mbox{for all $f\in\Lt(\R^m)$}
\end{equation} 
if and only if
\begin{equation}\label{eq:cond1}
\text{vol}(\Ga)^{-1} \, \widehat{\psi^2}(-p_2^*(\eta))\,\sum_{i=1}^M \innerBig{g_i}{\pi\big(J p_1^*(\eta)\big) g_i} = \delta_{\eta,0}\,,
\end{equation}
for each $\eta\in \Ga^*$, where $\delta$ is the Kronecker delta.
\end{proposition}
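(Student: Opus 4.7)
My plan is to read this proposition as a direct corollary of Proposition~\ref{prop:function N}. Specialize that result by taking $h_i = g_i$ for all $i$ and $f_1 = f_2 = f \in M^1(\R^d)$. Then
\begin{equation*}
\mathcal{N}^\psi(z) = \sum_{i=1}^M \sum_{\la\in\La(\Om)} w_\psi(\la)^2 \abs{\inner{\pi(z)f}{\pi(\la)g_i}}^2,
\end{equation*}
and by unitarity of the time-frequency shift, the tight frame identity \eqref{eq:tight1} applied to $\pi(z)f$ is equivalent to $\mathcal{N}^\psi(z) = \norm{\pi(z)f}_2^2 = \norm{f}_2^2$ for every $z\in\Rtd$. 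So the question reduces to: when is $\mathcal{N}^\psi$ identically equal to the constant $\norm{f}_2^2$? Since Proposition~\ref{prop:function N} tells us that $\mathcal{N}^\psi$ is almost periodic and equals its absolutely convergent Fourier series, the answer is: precisely when all its non-trivial Fourier coefficients vanish and the $\eta=0$ coefficient equals $\norm{f}_2^2$.

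For the implication $(\Leftarrow)$, I would substitute \eqref{eq:cond1} into the Fourier expansion \eqref{eq:series} provided by Proposition~\ref{prop:function N}. Only the $\eta=0$ term survives, and it evaluates to $\inner{f}{f}=\norm{f}_2^2$ because $p_1^*(0)=p_2^*(0)=0$ and $\pi(0)=I$. Hence $\mathcal{N}^\psi(z)\equiv \norm{f}_2^2$ for all $f \in M^1(\R^d)$; evaluating at $z=0$ gives \eqref{eq:tight1} on the dense subspace $M^1(\R^d)$, and the Bessel bound (stated right after the definition of a weighted Gabor system) extends it to all of $\Lt(\R^d)$.

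For the implication $(\Rightarrow)$, assume \eqref{eq:tight1} holds on $\Lt(\R^d)$ and fix $f\in M^1(\R^d)$. Then $\mathcal{N}^\psi(z)\equiv \norm{f}_2^2$ is a constant almost periodic function, so by uniqueness of the Fourier expansion of an almost periodic function, every Fourier coefficient $\widehat{\mathcal{N}^\psi}(\eta)$ with $\eta\neq 0$ must vanish, while $\widehat{\mathcal{N}^\psi}(0)=\norm{f}_2^2$. Here I use that $p_1^*$ is injective on $\Ga^*$, so distinct $\eta$ give distinct frequencies $p_1^*(\eta)$ and the coefficients are unambiguously defined. For the $\eta=0$ coefficient one recovers $\text{vol}(\Ga)^{-1}\widehat{\psi^2}(0)\sum_i \norm{g_i}_2^2 = 1$, which is exactly \eqref{eq:cond1} at $\eta=0$. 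For $\eta\neq 0$, the vanishing reads
\begin{equation*}
\text{vol}(\Ga)^{-1}\widehat{\psi^2}(-p_2^*(\eta))\,\inner{\pi(Jp_1^*(\eta))f}{f}\,\sum_{i=1}^M \inner{g_i}{\pi(Jp_1^*(\eta))g_i} = 0
\end{equation*}
for every $f\in M^1(\R^d)$.

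The last step is to cancel the $f$-factor: since $Jp_1^*(\eta)\neq 0$ when $\eta\neq 0$ (injectivity of $p_1^*$ and invertibility of $J$), I take $f$ to be a Gaussian, whose ambiguity function $\A(f,f)(z) = e^{\pi i x\cdot\om}\inner{\pi(z)f}{f}$ is strictly positive in modulus everywhere on $\Rtd$. Thus $\inner{\pi(Jp_1^*(\eta))f}{f}\neq 0$ and \eqref{eq:cond1} follows for $\eta\neq 0$. The main subtlety, and the only place where one must be careful, is precisely this elimination step: one has to make sure that the frequencies $p_1^*(\eta)$ are genuinely indexed by $\eta\in\Ga^*$ without collapse (hence the use of injectivity of $p_1^*$) and that a single test function $f\in M^1(\R^d)$ separates all of them simultaneously (hence the Gaussian). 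Everything else is a mechanical consequence of the bracket-product machinery already established.
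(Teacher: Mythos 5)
Your proposal is correct and follows essentially the same route as the paper's own proof: specialize Proposition~\ref{prop:function N} with $h_i=g_i$, $f_1=f_2=f$, reduce tightness to constancy of the resulting almost periodic function, read off its Fourier coefficients, and eliminate the $f$-dependent factor at $\eta\neq 0$ with a Gaussian whose ambiguity function never vanishes. The paper likewise restricts to the dense subspace $M^1(\R^d)$ (citing \cite{HW96}) exactly where you invoke the Bessel bound, so no further comment is needed.
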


\begin{proof}
By  \cite{HW96}, it is sufficient to prove the theorem when $f\in M^1(\R^d)$. Assume that $\G_\psi(g_1,\ldots,g_m;\La(\Om))$ is a normalized weighted tight Gabor frame. Since $g_i\in M^1(\R^d)$ for every $i=1,\ldots, M$, by Proposition~\ref{prop:function N} with $g_i=h_i$ for all $i=1,\ldots,M$ and $f_1=f_2=f$, we can define a function $\mathcal{O}^\psi(z)$ as
\begin{equation}\label{eq:function O}
\mathcal{O}^\psi(z) =  \sum_{i=1}^M \sum_{\la\in\La(\Om)} w_\psi(\la)^2 \,\abs{\inner{\pi(z) f}{\pi(\la) g_i}}^2\,.
\end{equation}
By \eqref{eq:tight1}, this function is constant and equals $\norm{f}_2^2$. Let $\mathcal{E}^\psi(z) = \mathcal{O}^\psi(z) - \norm{f}_2^2$. Then, $\mathcal{E}^\psi(z)$ is almost periodic and $\mathcal{E}^\psi = 0$. By the property of almost periodic function, it implies that the Fourier coefficients of $\mathcal{E}^\psi(z)$,
\begin{equation*}
\widehat{\mathcal{E}^\psi}(\eta) = \left \{ \begin{array}{lc} \widehat{\mathcal{O}^\psi}(\eta) - \norm{f}_2^2, & \eta=0 \\
\widehat{\mathcal{O}^\psi}(\eta), & \eta \neq 0\end{array} \right .
\end{equation*}
are zero. By Proposition~\ref{prop:function N}, we have then
\begin{equation}\label{eq:relation}
\text{vol}(\Ga)^{-1} \, \widehat{\psi^2}(-p_2^*(\eta))\, \innerBig{\pi\big (J p_1^*(\eta)\big) f}{f}\,\sum_{i=1}^M \innerBig{g_i}{\pi\big(J p_1^*(\eta)\big) g_i} = \delta_{\eta,0} \norm{f}_2^2\,,
\end{equation}
with $\eta\in \Ga^*$, for every $f\in M^1(\R^d)$. 

Let $f\in M^1(\R^d)\setminus \{0\}$. Consider first $\eta=0$. Then \eqref{eq:relation} becomes 
\begin{equation*}
\text{vol}(\Ga)^{-1} \, \widehat{\psi^2}(0)\, \inner{f}{f}\,\sum_{i=1}^M \inner{g_i}{g_i} = \delta_{\eta,0} \norm{f}_2^2\,,
\end{equation*}
and \eqref{eq:cond1} follows. Now, let $\eta\neq 0$ be fixed and take $f(x) = e^{-\pi x^2}$, a Gausian. Then \eqref{eq:relation} implies 
\begin{equation*}
\text{vol}(\Ga)^{-1} \, \widehat{\psi^2}(-p_2^*(\eta))\, \innerBig{\pi\big (J p_1^*(\eta)\big) f}{f}\,\sum_{i=1}^M \innerBig{g_i}{\pi\big(J p_1^*(\eta)\big) g_i} = 0\,,
\end{equation*}
and since $\innerBig{\pi\big (J p_1^*(\eta)\big) f}{f} \neq 0$, \eqref{eq:cond1} is satisfied.

Conversely, assume that \eqref{eq:cond1} holds. Since $g_i\in M^1(\R^d)$ for every $i=1,\ldots,M$, by Proposition~\ref{prop:function N} with $f_1=f_2=f$ and $g_i=h_i$ for every $i=1,\ldots,M$, we can define function $\mathcal{O}^\psi(z)$ as in \eqref{eq:function O}. Then, by  Proposition~\ref{prop:function N} and relation \eqref{eq:cond1}, the function $\mathcal{O}^\psi(z)$ is given by the trigonometric series
\begin{equation*}
\sum_{\eta\in \Ga^*} \widehat{\mathcal{O}^\psi}(\eta) e^{-2\pi i z \cdot p_1^*(\eta)} = \mathcal{O}^\psi(z)\,,
\end{equation*}
for every $z\in\R^d$, with
\begin{equation*}
\widehat{\mathcal{O}^\psi}(\eta) = \delta_{\eta,0} \innerBig{\pi\big (J p_1^*(\eta)\big) f}{f}\,.
\end{equation*}
Hence, $\mathcal{O}^\psi(z)$ is constant and $\mathcal{O}^\psi(z) = \norm{f}_2^2$. Evaluating $\mathcal{O}^\psi(z)$ at $z=0$, gives the claim.
\end{proof}

We know state conditions for a weighted Gabor system $\G_\psi(h_1,\ldots,h_m;\La(\Om))$ to be a dual system of $\G_\psi(g_1,\ldots,g_m;\La(\Om))$.

\begin{proposition}\label{thm:dual frame}
Let $\La(\Om)$ be a simple model set and $\psi\in C_0^\infty(\Om)$ non-negative function. Let $g_i\in M^1(\R^d)$ and $h_i\in M^1(\R^d)$, for every $i=1,\ldots, M$. Then $\G_\psi(g_1,\ldots,g_m;\La(\Om))$ and $\G_\psi(h_1,\ldots,h_m;\La(\Om))$ are weighted dual Gabor frames, that is
\begin{equation}\label{eq:dual}
\sum_{i=1}^M \sum_{\la\in\La(\Om)} w_\psi(\la)^2 \, \inner{f}{\pi(\la) g_i}\inner{\pi(\la) h_i}{f} = \norm{f}_2^2 \quad \mbox{for all $f\in\Lt(\R^m)$}\,,
\end{equation} 
if and only if
\begin{equation}\label{eq:cond}
\text{vol}(\Ga)^{-1} \, \widehat{\psi^2}(-p_2^*(\eta)) \,\sum_{i=1}^M \innerBig{h_i}{\pi\big(J p_1^*(\eta)\big) g_i} = \delta_{\eta,0}\,,
\end{equation}
for each $\eta\in \Ga^*$, where $\delta$ is the Kronecker delta.
\end{proposition}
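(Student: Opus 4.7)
The plan is to imitate closely the proof of Proposition~\ref{thm:tight frame1}, replacing the self-dual quadratic form $\abs{\inner{f}{\pi(\la)g_i}}^2$ by the sesquilinear expression $\inner{f}{\pi(\la)g_i}\inner{\pi(\la)h_i}{f}$. By \cite{HW96} it suffices to work with $f\in M^1(\R^d)$, which is dense in $\Lt(\R^d)$. For such an $f$, define
\begin{equation*}
\mathcal{N}^\psi(z) := \sum_{i=1}^M \sum_{\la\in\La(\Om)} w_\psi(\la)^2 \, \inner{\pi(z)f}{\pi(\la)g_i} \inner{\pi(\la)h_i}{\pi(z)f}\,.
\end{equation*}
This is exactly the object in \eqref{eq:function G} of Proposition~\ref{prop:function N} with $f_1=f_2=f$, so $\mathcal{N}^\psi$ is almost periodic and coincides pointwise with its Fourier series, whose coefficients are
\begin{equation*}
\widehat{\mathcal{N}^\psi}(\eta) = \text{vol}(\Ga)^{-1} \, \widehat{\psi^2}(-p_2^*(\eta))\, \innerBig{\pi\big(J p_1^*(\eta)\big)f}{f} \sum_{i=1}^M \innerBig{h_i}{\pi\big(J p_1^*(\eta)\big)g_i}\,, \qquad \eta\in\Ga^*\,.
\end{equation*}

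For the direction $\Longleftarrow$, assume \eqref{eq:cond} holds. Then only the $\eta=0$ term in the Fourier series of $\mathcal{N}^\psi$ survives, so $\mathcal{N}^\psi(z)=\inner{f}{f}=\norm{f}_2^2$ for all $z\in\Rtd$; evaluating at $z=0$ yields \eqref{eq:dual}.

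For the direction $\Longrightarrow$, suppose \eqref{eq:dual} holds for every $f\in\Lt(\R^d)$. Replacing $f$ by $\pi(z)f$ in \eqref{eq:dual} and using unitarity of $\pi(z)$, we obtain $\mathcal{N}^\psi(z)=\norm{\pi(z)f}_2^2=\norm{f}_2^2$, i.e.\ $\mathcal{N}^\psi$ is constant. Defining the error $\mathcal{E}^\psi(z)=\mathcal{N}^\psi(z)-\norm{f}_2^2\equiv 0$ and invoking uniqueness of Fourier coefficients of almost periodic functions gives
\begin{equation*}
\text{vol}(\Ga)^{-1} \, \widehat{\psi^2}(-p_2^*(\eta))\, \innerBig{\pi\big(J p_1^*(\eta)\big)f}{f} \sum_{i=1}^M \innerBig{h_i}{\pi\big(J p_1^*(\eta)\big)g_i} = \delta_{\eta,0}\,\norm{f}_2^2
\end{equation*}
for every $f\in M^1(\R^d)$ and every $\eta\in\Ga^*$. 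For $\eta=0$ this reduces to \eqref{eq:cond} at $\eta=0$. For fixed $\eta\neq 0$ we choose $f(x)=e^{-\pi x^2}$, the Gaussian, whose ambiguity function is strictly positive, so $\inner{\pi(Jp_1^*(\eta))f}{f}\neq 0$; dividing by this nonzero factor yields \eqref{eq:cond} for the given $\eta$.

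The only delicate step, and the main obstacle, is the last isolation argument: we need to know that for each $\eta\in\Ga^*$ there exists a single test function $f\in M^1(\R^d)$ with $\inner{\pi(Jp_1^*(\eta))f}{f}\neq 0$. This is precisely what the Gaussian choice provides, exactly as in the proof of Proposition~\ref{thm:tight frame1}; the rest of the argument is machinery from Proposition~\ref{prop:function N}.
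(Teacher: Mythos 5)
Your proposal is correct and is essentially the paper's own argument: the paper proves this proposition simply by declaring it "analogous to the proof of Proposition~\ref{thm:tight frame1} with obvious adjustments," and you have carried out exactly those adjustments (applying Proposition~\ref{prop:function N} with $f_1=f_2=f$ and distinct windows $g_i,h_i$, using vanishing of Fourier coefficients of the almost periodic function, and isolating each $\eta\neq 0$ with the Gaussian).
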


\begin{proof}
The proof is analogous to the proof of Proposition~\ref{thm:tight frame1} with obvious adjustments.
\end{proof}

Based on the last proposition we can also derive density condition for weighted Gabor frames.

\begin{proposition}
Let $\La(\Om)$ be a simple model set and $\psi\in C_0^\infty(\Om)$ a non-negative function such that $\norm{\psi}_2=1$. If the Gabor frame $\G_\psi(g;\La(\Om))$, with $g\in M^1(\La(\Om))$ admits a weighted dual that is also a Gabor system, then $D(\La(\Om))\geq 1$.
\end{proposition}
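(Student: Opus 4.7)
The first move is to extract the main algebraic consequence of the hypothesis. Apply Proposition \ref{thm:dual frame} to the dual pair $\G_\psi(g;\La(\Om))$, $\G_\psi(h;\La(\Om))$ to obtain the Wexler-Raz biorthogonality
\begin{equation*}
\text{vol}(\Ga)^{-1}\,\widehat{\psi^2}(-p_2^*(\eta))\,\inner{h}{\pi(Jp_1^*(\eta))g} \;=\; \delta_{\eta,0}, \qquad \eta\in\Ga^{\ast}.
\end{equation*}
Evaluating at $\eta=0$ and using $\widehat{\psi^2}(0)=\int_\R\psi^2=\norm{\psi}_2^2=1$ gives the single scalar identity $\inner{h}{g}=\text{vol}(\Ga)$. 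Cauchy-Schwarz then yields $\text{vol}(\Ga)=|\inner{h}{g}|\le \norm{g}_2\,\norm{h}_2$. Since $D(\La(\Om))=\text{vol}(\Ga)^{-1}|\Om|$, it now suffices to prove $\norm{g}_2\,\norm{h}_2\le|\Om|$.

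For this bound I would invoke Moyal's formula, $\int_{\Rtd}|V_g h(w)|^2\,dw=\norm{g}_2^2\norm{h}_2^2$, together with Theorem \ref{thm:poisson_MS} applied to $F=|V_g h|^2$. Since $g,h\in M^1(\R^d)$ give $V_g h\in M^1(\Rtd)$, the Banach-algebra property of $M^1$ under pointwise multiplication (Proposition~i) ensures $|V_g h|^2 \in M^1(\Rtd)$, so Poisson summation for the model set is legitimate. The resulting periodization identity expresses the sampled sum $\sum_{\la\in\La(\Om)} w_\psi(\la)^2 |V_g h(\la)|^2$ as a series over $\Ga^{\ast}$ whose $\eta=0$ term equals $\text{vol}(\Ga)^{-1}\norm{g}_2^2\norm{h}_2^2$. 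On the other hand, Bessel's inequality for $\G_\psi(g;\La(\Om))$ applied to $h$ bounds the sampled sum from above by $B_g\norm{h}_2^2$, while the $\la=0$ contribution alone is $\psi(0)^2|\inner{h}{g}|^2=\psi(0)^2\text{vol}(\Ga)^2$. Combining the two sides (and using the $L^2$-normalisation $\norm{\psi}_2=1$, which forces $\norm{\psi}_\infty^{-2}\le|\Om|$), one should be able to extract the inequality $\text{vol}(\Ga)\le|\Om|$.

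The main obstacle is making the Poisson-plus-Moyal step rigorous: the Wexler-Raz identity annihilates only the \emph{product} $\widehat{\psi^2}(-p_2^*(\eta))\inner{h}{\pi(Jp_1^*(\eta))g}$ and not $V_g h$ itself at the lifted points, so the tail of the periodization corresponding to $\eta\ne 0$ must be shown to be nonnegative (or controlled by a positivity argument reminiscent of Bochner's theorem applied to $|V_g h|^2$). If that positivity step is granted, the chain
\begin{equation*}
\text{vol}(\Ga)^2 \;\le\; \norm{g}_2^2\,\norm{h}_2^2 \;\le\; \text{vol}(\Ga)\cdot|\Om|
\end{equation*}
simplifies to $\text{vol}(\Ga)\le|\Om|$, whence $D(\La(\Om))\ge 1$. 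An alternative route, should the positivity argument prove stubborn, is to mimic the lattice Parseval-element bound $\norm{e_i}^2\le 1$ after passing to the tight rescaling $\tilde g=(S_g^\psi)^{-1/2}g$; but because $S_g^\psi$ fails to commute with the time-frequency shifts coming from the irregular set $\La(\Om)$, this rescaling destroys the Gabor structure and must be handled with care.
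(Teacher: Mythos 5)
Your opening moves are sound and coincide with the paper's: Proposition~\ref{thm:dual frame} with $M=1$ at $\eta=0$, together with $\widehat{\psi^2}(0)=\norm{\psi}_2^2=1$, gives $\inner{h}{g}=\text{vol}(\Ga)$, and the whole problem is then to bound this quantity by $\abs{\Om}$. But your proof does not close. The reduction ``it suffices to prove $\norm{g}_2\norm{h}_2\le\abs{\Om}$'' is never established, and the tools you invoke do not produce it: even granting your Poisson--Moyal step, comparing the $\eta=0$ term of the periodization with the Bessel bound yields $\text{vol}(\Ga)^{-1}\norm{g}_2^2\norm{h}_2^2\le B_g\norm{h}_2^2$, i.e.\ $\norm{g}_2^2\le\text{vol}(\Ga)B_g$; combined with your Cauchy--Schwarz step this gives only $\text{vol}(\Ga)\le B_g\norm{h}_2^2$, and nothing in the proposal relates $B_g\norm{h}_2^2$ (or $\norm{g}_2\norm{h}_2$, both scale-invariant attributes of the dual pair) to $\abs{\Om}$ --- the observation $\norm{\psi}_\infty^{-2}\le\abs{\Om}$ is stated but never enters any inequality, and ``one should be able to extract'' is exactly the missing argument. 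This comparison is the crux of the statement: in the paper $\abs{\Om}$ enters through the normalization $\norm{h}_2^2=\abs{\Om}B_g^{-1}$, after which setting $f_1=h$, $f_2=g$ in the duality identity and using the Bessel bound gives $\inner{h}{g}\le B_g\norm{h}_2^2=\abs{\Om}$, which together with $\text{vol}(\Ga)^{-1}\inner{h}{g}=1$ finishes. Your proposal has no counterpart of that step, so the final chain $\norm{g}_2^2\norm{h}_2^2\le\text{vol}(\Ga)\abs{\Om}$ is unsupported.

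Moreover, the positivity you ask to be granted is not a removable technicality: in the model-set periodization of $F=\abs{\V_g h}^2$ at $z=0$ the $\eta\ne0$ terms are $\text{vol}(\Ga)^{-1}\widehat{\psi^2}(-p_2^*(\eta))\,\widehat{F}(-p_1^*(\eta))$, and neither factor is nonnegative in general: $\widehat{\psi^2}$ oscillates (for the functions $\psi_n$ built later in the paper it is essentially a product of sinc factors), and the computation in the proof of Proposition~\ref{prop:function N lattice} identifies $\widehat{F}(\beta)$ with the product $\inner{\pi(J\beta)h}{h}\,\inner{g}{\pi(J\beta)g}$, which is complex and sign-indefinite ($F\ge0$ makes $\widehat F$ positive definite, not pointwise nonnegative), so the tail cannot be dropped and no Bochner-type argument will make it termwise nonnegative. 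The positivity that is genuinely available is that of the almost periodic function $z\mapsto\sum_{\la\in\La(\Om)}w_\psi(\la)^2\abs{\inner{\pi(z)h}{\pi(\la)g}}^2$, which is pointwise at most $B_g\norm{h}_2^2$ while its mean value equals its $0$th Fourier coefficient $\text{vol}(\Ga)^{-1}\norm{g}_2^2\norm{h}_2^2$ by Proposition~\ref{prop:function N}; this is the rigorous substitute for your Poisson-at-$z=0$ step, but it again delivers only $\norm{g}_2^2\le\text{vol}(\Ga)B_g$ and leaves the decisive comparison with $\abs{\Om}$ unresolved.
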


\begin{proof}
Let $g\in M^1(\R^d)$ and assume that the Gabor frame $\G_\psi(g;\La(\Om))$ admits a dual $\G_\psi(h;\La(\Om))$ with $h\in M^1(\R^d)$. Let $B_g$ be the upper frame bound of $\G_\psi(g;\La(\Om))$, and we can assume without loss of generality that $\norm{h}_2^2 = \abs{\Om}B_g^{-1}$. Then, by polarization, we have the frame decomposition
\begin{equation*}
\inner{f_1}{f_2} = \sum_{\la\in\La(\Om)} \inner{f_1}{\pi(\la) g} w_\psi(\la)^2 \, \inner{\pi(\la) h}{f_2} \quad \mbox{for all $f_1,f_2\in\Lt(\R^m)$,}
\end{equation*} 
If we set $f_1=h$ and $f_2=g$, by the Bessel property of $\G_\psi(g;\La(\Om))$, we obtain
\begin{equation*}
\inner{h}{g} = \sum_{\la\in\La(\Om)}w_\psi(\la)^2 \, \abs{\inner{h}{\pi(\la) g}}^2 \leq B_g \norm{h}_2^2 = \abs{\Om}\,.
\end{equation*}
On the other hand, by Proposition~\ref{thm:dual frame} with $M=1$, $ \text{vol}(\Ga)^{-1} \widehat{\psi^2}(0)\,\inner{h}{g} = 1$, and therefore $D(\La(\Om)) = \text{vol}(\Ga)^{-1}  \abs{\Om}  \geq 1$ because $\widehat{\psi^2}(0) = \norm{\psi}_2^2 = 1$.
\end{proof}

Gabardo, in \cite{Ga09}, gave  a characterization of the weighted irregular Gabor tight frames and dual systems in terms of the distributional symplectic Fourier transform of a positive Borel measure where the windows belong to the Schwartz class. It is possible to derive his results in the setting of model sets, using the characterization just presented.


\subsection{(Nonweighted) Gabor Systems}

Let $g_i,h_i\in M^1(\R^d)$, $i=1,\ldots,M$, and $\La(\Om)$ a simple model set. At the beginning of Section~\ref{sec:not}, we showed that the frame operator $S_g^{\La(\Om)}$ of $\G(g_1,\ldots,g_M;\La)$ does not commute with the time-frequency shifts taken from $\La(\Om)$. The same holds in particular for any $\La\in X(\La(\Om))$ and a time-frequency shift by $z\in\Rtd$. Let $S_{g,h}^{\La-z}$ denote the mixed frame operator associated to $\G(g_1,\ldots,g_M;\La-z)$ and $\G(h_1,\ldots,h_M;\La-z)$. Then there is a covariance relation relating $S_{g,h}^{\La}$ and $S_{g,h}^{\La-z}$. The  following result was obtained by Kreisel in \cite{K16}. We state it here for the mixed frame operators.

\begin{proposition}\cite{K16}\label{prop:operator shifts}
If $\G(g_1,\ldots,g_M;\La)$ and $\G(h_1,\ldots,h_M;\La)$ ,are Gabor systems for $\La$, and, $\G(g_1,\ldots,g_M;\La-z)$ and $\G(h_1,\ldots,h_M;\La-z)$ are Gabor systems for $\La-z$, then the mixed frame operators $S_{g,h}^\La$ and $S_{g,h}^{\La-z}$ satisfy
\begin{equation*}
S_{g,h}^\La \, \pi(z) = \pi(z)\, S_{g,h}^{\La-z}\,.
\end{equation*}
\end{proposition}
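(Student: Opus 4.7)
The plan is to verify the identity pointwise by expanding both sides on a generic $f$ and using only the composition rule for time-frequency shifts, namely $\pi(z_1)\pi(z_2) = e^{-2\pi i x_1\cdot\om_2}\pi(z_1+z_2)$ for $z_j=(x_j,\om_j)$, together with the unitarity $\pi(z)^*\pi(z) = I$. No Poisson-type tool is needed: the statement is really a covariance computation, and the model set structure of $\La$ plays no role beyond guaranteeing that the series defining the mixed frame operators converge.

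Concretely, I would start from
\begin{equation*}
S_{g,h}^{\La}\pi(z)f \;=\; \sum_{i=1}^M\sum_{\la\in\La}\inner{\pi(z)f}{\pi(\la)g_i}\,\pi(\la)h_i,
\end{equation*}
reindex by $\mu=\la-z$ so that $\mu$ ranges over $\La-z$, and then rewrite the two occurrences of $\pi(\mu+z)$ via the composition law as $e^{2\pi i z_1\cdot\mu_2}\pi(z)\pi(\mu)$. In the bracket one obtains
\begin{equation*}
\inner{\pi(z)f}{\pi(\mu+z)g_i} \;=\; e^{-2\pi i z_1\cdot\mu_2}\inner{\pi(z)f}{\pi(z)\pi(\mu)g_i} \;=\; e^{-2\pi i z_1\cdot\mu_2}\inner{f}{\pi(\mu)g_i},
\end{equation*}
using $\pi(z)^*\pi(z)=I$, while the synthesis term contributes the opposite phase $e^{+2\pi i z_1\cdot\mu_2}\pi(z)\pi(\mu)h_i$. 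The two phases cancel exactly.

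After this cancellation the outer $\pi(z)$ factors out of the sum and what remains is precisely $\pi(z)\,S_{g,h}^{\La-z}f$, which gives the claim. The only point demanding care is bookkeeping of the two phase factors, and confirming that they carry opposite signs so as to cancel; this is where an error would typically creep in if one were sloppy with the convention $\pi(z)=M_\om T_x$ versus $T_x M_\om$. Apart from that, the computation is entirely mechanical, and the boundedness assumptions ($g_i,h_i\in M^1(\R^d)$ and relative separation of $\La$ and $\La-z$) ensure that all reordering of the absolutely convergent sums is legitimate.
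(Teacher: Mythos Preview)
Your proof is correct: the phase bookkeeping is right under the paper's convention $\pi(z)=M_\om T_x$, and the cancellation of $e^{\mp 2\pi i z_1\cdot\mu_2}$ between the analysis and synthesis terms is exactly what makes the covariance work. The paper does not actually prove this proposition---it is quoted from \cite{K16}---but the computation in Section~\ref{sec:not} (showing $\pi(\be)S_g^\La \neq S_g^\La\pi(\be)$ for irregular $\La$) is precisely your argument in the special case $g=h$, and comparing the two displayed sums there already yields $S_g^\La\pi(\be)=\pi(\be)S_g^{\La-\be}$; so your approach is the same as what the paper itself implicitly contains.
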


Moreover, the following continuity property holds.
\begin{proposition}\cite{K16}\label{prop:continuity}
Suppose $\La_n\rightarrow \La$ in $X(\La(\Om))$ and the window functions $g_i,h_i$ lie in $M^1(\R^d)$, for each $i=1,\ldots,M$. Then $S_{g,h}^{\La_n} \rightarrow S_{g,h}^{\La}$ in the strong operator topology on $B(M^1(\R^d))$.
\end{proposition}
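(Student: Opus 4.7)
The plan is to show, for every fixed $f \in M^1(\R^d)$, that $\norm{S_{g,h}^{\La_n} f - S_{g,h}^\La f}_{M^1} \to 0$, which is exactly strong operator convergence on $B(M^1(\R^d))$. Given $\varepsilon > 0$, I would split the difference into a local piece coming from the points of $\La_n$ and $\La$ inside a large ball $B(0,R)$, and a tail piece coming from the complement, and show each can be made arbitrarily small.

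For the tail I would use two facts: (i) every point set in the local hull $X(\La(\Om))$ shares the same relative separation $\text{rel}(\La(\Om))$, so Bessel-type estimates hold uniformly in $\La' \in X(\La(\Om))$; and (ii) the cross-ambiguity function $\A(f,g_i)$ lies in $\mathbf{W}(C_0,\ell^1)(\Rtd)$ by Proposition~\ref{prop:ambiguity}, so by Proposition~$11.1.4$ of \cite{gr01} its restriction to any relatively separated set is $\ell^1$-summable, with tails outside $B(0,R)$ vanishing as $R\to\infty$ \emph{independently} of the point set. Since $\V_{g_i}f$ and $\A(f,g_i)$ differ only by a unimodular factor, the same tail control transfers to the short-time Fourier transform. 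Combined with the isometry $\norm{\pi(\la)h_i}_{M^1} = \norm{h_i}_{M^1}$, choosing $R$ large renders the tail contributions from both $\La_n$ and $\La$ each smaller than $\varepsilon/3$, for every $n$.

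For the local piece, convergence $\La_n\to\La$ in the local topology supplies shifts $v_n \in B(0,\epsilon_n)$ with $\epsilon_n\to 0$ such that $\La_n\cap B(0,R) = (\La + v_n)\cap B(0,R)$ for all $n$ sufficiently large. Enumerating $\La \cap B(0,R) = \{\la^{(1)},\ldots,\la^{(N)}\}$, the local difference reduces to the finite sum
\begin{equation*}
\sum_{i=1}^M \sum_{k=1}^N \Big(\inner{f}{\pi(\la^{(k)}+v_n)g_i}\,\pi(\la^{(k)}+v_n)h_i - \inner{f}{\pi(\la^{(k)})g_i}\,\pi(\la^{(k)})h_i\Big).
\end{equation*}
Strong continuity of time-frequency shifts on $M^1(\R^d)$ drives both the scalar factor and the $M^1$-vector factor in each summand to their $v_n = 0$ limits as $n\to\infty$, and since $N=N(R)$ is finite the whole local sum tends to $0$ in $M^1$-norm. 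Combining via the triangle inequality gives the desired convergence.

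The principal obstacle is securing the tail bound \emph{uniformly in $n$}: for a general sequence of relatively separated sets, tail constants might in principle blow up along the sequence. The fact that $\text{rel}(\La')$ is pinned to the single value $\text{rel}(\La(\Om))$ for every $\La' \in X(\La(\Om))$, together with the amalgam-space membership of $\A(f,g_i)$, supplies exactly the uniformity required to close the argument.
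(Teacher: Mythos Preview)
The paper does not supply a proof of this proposition; it is quoted from Kreisel \cite{K16} without argument, so there is no in-paper proof to compare against. Your tail-plus-local splitting is the natural approach and is essentially correct.

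Two small technical points are worth tightening. First, after writing $\La_n\cap B(0,R)=(\La+v_n)\cap B(0,R)$ you silently identify this set with $\{\la^{(k)}+v_n:k=1,\dots,N\}$, which presumes that no point of $\La$ crosses $\partial B(0,R)$ under the shift by $v_n$. Since $\La$ is discrete you may choose $R$ with $\partial B(0,R)\cap\La=\emptyset$; then every $\la^{(k)}$ has positive distance to $\partial B(0,R)$, and for $\abs{v_n}$ below the minimum of these distances the identification holds exactly. This deserves one sentence. Second, the uniform tail bound you need is most cleanly phrased as
\[
\sum_{\la\in\La'\setminus B(0,R)}\abs{\V_{g_i}f(\la)}\;\le\;\text{rel}(\La(\Om))\sum_{\abs{k}\ge R-c}\norm{\V_{g_i}f\cdot\mathds{1}_k}_\infty,
\]
the right-hand side being a tail of the convergent amalgam series for $\V_{g_i}f\in\mathbf{W}(C_0,\ell^1)(\Rtd)$ and manifestly independent of $\La'\in X(\La(\Om))$. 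Stating it this way makes the uniformity in $n$ transparent rather than leaving it as an appeal to Proposition~11.1.4 of \cite{gr01}. With these clarifications your argument closes.
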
 

Even though the mapping $\La \rightarrow S_g^{\La}$, $\La\in X(\La(\Om))$ is not continuous when $B(M^1(\R^d))$ is given the norm topology, all the frames $\G(g_1,\ldots,g_M;\La)$ have the same optimal frame bounds.
\begin{proposition}\cite{K16}\label{prop:all shifts}
Suppose $\G(g_1,\ldots,g_M;\La)$ is a frame for each $\La \in X(\La(\Om))$ and each $g_i\in M^1(\R^d)$. For any $\La\in X(\La(\Om))$ the optimal upper and lower frame bounds for $\G(g_1,\ldots,g_M;\La)$ are the same as those for $\G(g_1,\ldots,g_M;\La(\Om))$. As a result, $\norm{S_g^\La}_{M^1} = \norm{S_g^{\La(\Om)}}_{M^1}$ and $\norm{(S_g^\La)^{-1}}_{M^1} = \norm{(S_g^{\La(\Om)})^{-1}}_{M^1}$, where $\norm{\cdot}_{M^1}$ denotes the operator norm on $B(M^1(\R^d))$.
\end{proposition}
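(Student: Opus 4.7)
The plan is to introduce two real-valued functions on the hull $X(\La(\Om))$, namely
\begin{equation*}
\Phi(\La):=\norm{S_g^\La}_{M^1}, \qquad \Psi(\La):=\norm{(S_g^\La)^{-1}}_{M^1},
\end{equation*}
and show that each is constant. Since the optimal upper and lower $M^1$-frame bounds for $\G(g_1,\ldots,g_M;\La)$ are precisely $\Phi(\La)^2$ and $\Psi(\La)^{-2}$ in the paper's convention, this will prove the proposition. My strategy is to verify (i) invariance of $\Phi$ and $\Psi$ under the $\Rtd$-action $\La\mapsto\La-z$, and (ii) lower semi-continuity of both, and then combine these with minimality of the $\Rtd$-action on the hull of a simple model set to force $\Phi\equiv\Phi(\La(\Om))$ and $\Psi\equiv\Psi(\La(\Om))$.

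For the invariance I would apply Proposition~\ref{prop:operator shifts} with $h=g$, which gives the covariance identity $S_g^{\La-z}=\pi(-z)\,S_g^\La\,\pi(z)$ for every $\La\in X(\La(\Om))$ and $z\in\Rtd$; since $\pi(z)$ is an isometric isomorphism of $M^1(\R^d)$, this identity is an isometric similarity, whence $\Phi(\La-z)=\Phi(\La)$. The identity passes to inverses, $(S_g^{\La-z})^{-1}=\pi(-z)(S_g^\La)^{-1}\pi(z)$, so $\Psi(\La-z)=\Psi(\La)$. For the lower semi-continuity, Proposition~\ref{prop:continuity} says that for each fixed $f\in M^1(\R^d)$ the map $\La\mapsto S_g^\La f$ is continuous from $X(\La(\Om))$ into $M^1(\R^d)$, so $\La\mapsto\norm{S_g^\La f}_{M^1}/\norm{f}_{M^1}$ is continuous for every $f\ne 0$. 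Writing
\begin{equation*}
\Phi(\La)=\sup_{f\ne 0}\frac{\norm{S_g^\La f}_{M^1}}{\norm{f}_{M^1}}, \qquad \frac{1}{\Psi(\La)}=\inf_{f\ne 0}\frac{\norm{S_g^\La f}_{M^1}}{\norm{f}_{M^1}},
\end{equation*}
exhibits $\Phi$ as a supremum and $1/\Psi$ as an infimum of continuous functions, hence $\Phi$ is lower semi-continuous and $1/\Psi$ is upper semi-continuous, which (since $\Psi<\infty$ everywhere by the standing frame hypothesis) is equivalent to $\Psi$ being lower semi-continuous.

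Finally I would invoke the standard fact that the $\Rtd$-action on the hull of a simple model set is minimal: every orbit is dense in $X(\La(\Om))$. Hence, given any $\La\in X(\La(\Om))$, one can choose $z_n\in\Rtd$ with $\La(\Om)-z_n\to\La$ and $y_n\in\Rtd$ with $\La-y_n\to\La(\Om)$. Invariance and lower semi-continuity then yield
\begin{equation*}
\Phi(\La)\leq\liminf_n\Phi(\La(\Om)-z_n)=\Phi(\La(\Om))\leq\liminf_n\Phi(\La-y_n)=\Phi(\La),
\end{equation*}
so $\Phi$ is constant on $X(\La(\Om))$; an identical chain of inequalities gives $\Psi\equiv\Psi(\La(\Om))$, completing the proof.

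The main obstacle I anticipate is conceptual rather than computational: one must identify minimality of the $\Rtd$-action on $X(\La(\Om))$ as the correct dynamical input and cite it from the aperiodic-order literature. With that in hand, everything reduces mechanically to the two supplied propositions -- the covariance identity of Proposition~\ref{prop:operator shifts} and the strong-operator continuity of Proposition~\ref{prop:continuity} -- together with the elementary variational formulas for the operator norm and the norm of an inverse.
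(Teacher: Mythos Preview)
The paper does not supply its own proof of this proposition; it is quoted from Kreisel \cite{K16} without argument, so there is nothing in the present paper to compare against. Your argument---invariance of $\Phi,\Psi$ under the $\Rtd$-action via the covariance identity of Proposition~\ref{prop:operator shifts}, lower semi-continuity via the strong-operator continuity of Proposition~\ref{prop:continuity}, and constancy then forced by minimality of the hull dynamical system---is correct and is the natural proof of such a statement; it is almost certainly the argument (or very close to it) that appears in \cite{K16}.

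Two small remarks. First, you use that $\pi(z)$ is an isometry of $M^1(\R^d)$; this is true (because $|\V_g(\pi(z)f)|$ is a translate of $|\V_g f|$), but the paper only records invariance of $M^1$ under time-frequency shifts, not isometry, so a one-line justification would be appropriate. Second, minimality of the $\Rtd$-action on $X(\La(\Om))$ is indeed the right dynamical input and is standard for (repetitive) model sets; as you note, it should be cited explicitly from the aperiodic-order literature since it is not proved in the present paper.
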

As  a result of the continuity property, we have the following Corollary.
\begin{corollary}\cite{K16}
Suppose $g_1,\ldots,g_M\in M^1(\R^d)$ and $\G(g_1,\ldots,g_M;\La(\Om))$ is an $M^1$-frame. Then for any $\La \in X(\La(\Om))$, $\G(g_1,\ldots,g_M;\La)$ is also an $M^1$-frame.
\end{corollary}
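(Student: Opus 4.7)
The plan is to reduce the statement first to the orbit and then to the full hull, using the two covariance/continuity results of Kreisel quoted just above the corollary.

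First I would treat the case $\La = \La(\Om) + z$ for some $z \in \Rtd$, i.e. a point of the orbit of $\La(\Om)$. Applying Proposition~\ref{prop:operator shifts} to this $\La$ (noting $\La - z = \La(\Om)$) gives
$$S_g^{\La(\Om)+z}\,\pi(z) = \pi(z)\,S_g^{\La(\Om)},\qquad\text{hence}\qquad S_g^{\La(\Om)+z} = \pi(z)\,S_g^{\La(\Om)}\,\pi(z)^{-1}.$$
Since time-frequency shifts are isometries on every modulation space, and in particular on $M^1(\R^d)$, conjugation by $\pi(z)$ preserves operator bounds on $M^1(\R^d)$. Thus $\G(g_1,\ldots,g_M;\La(\Om)+z)$ satisfies the same lower and upper $M^1$-frame inequalities as $\G(g_1,\ldots,g_M;\La(\Om))$, with the identical constants $A_g,B_g$ given by hypothesis.

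Next I would extend this to an arbitrary $\La\in X(\La(\Om))$. By definition of the local hull, the orbit $\{\La(\Om)+z : z\in\Rtd\}$ is dense in $X(\La(\Om))$, so we may pick a sequence $\La_n = \La(\Om)+z_n$ with $\La_n \to \La$. By Proposition~\ref{prop:continuity}, $S_g^{\La_n} \to S_g^{\La}$ in the strong operator topology on $B(M^1(\R^d))$, so for every $f\in M^1(\R^d)$ we have $S_g^{\La_n} f \to S_g^{\La} f$ in $M^1(\R^d)$; continuity of the norm then gives $\|S_g^{\La_n} f\|_{M^1} \to \|S_g^{\La} f\|_{M^1}$. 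Combining this with the uniform bounds from the orbit case,
$$\sqrt{A_g}\,\|f\|_{M^1} \;\le\; \|S_g^{\La_n} f\|_{M^1} \;\le\; \sqrt{B_g}\,\|f\|_{M^1}\qquad (n\in\N),$$
passing to the limit in $n$ yields
$$\sqrt{A_g}\,\|f\|_{M^1} \;\le\; \|S_g^{\La} f\|_{M^1} \;\le\; \sqrt{B_g}\,\|f\|_{M^1}$$
for every $f\in M^1(\R^d)$, which is exactly the $M^1$-frame condition for $\G(g_1,\ldots,g_M;\La)$.

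There is no real obstacle, only two small things to check. One is to verify that $\pi(z)$ is an isometry on $M^1(\R^d)$ (this is standard invariance of modulation spaces under time-frequency shifts, also noted implicitly in Section~\ref{sec:not}). The other is the interchange of limit and norm, which is immediate because norm convergence follows from strong convergence of $S_g^{\La_n} f$ together with the continuity of $\|\cdot\|_{M^1}$. Note that as a by-product, $\G(g_1,\ldots,g_M;\La)$ inherits the same optimal frame bounds $A_g,B_g$ uniformly over $\La\in X(\La(\Om))$, which is exactly the statement of Proposition~\ref{prop:all shifts}.
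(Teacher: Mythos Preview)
Your proof is correct and follows precisely the route the paper indicates: the corollary is stated without proof, attributed to \cite{K16}, and introduced with ``As a result of the continuity property'' --- i.e.\ it is meant to follow from Propositions~\ref{prop:operator shifts} and~\ref{prop:continuity} exactly as you spell out (covariance on the orbit, then strong-limit passage to the closure).

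One small caveat on your closing remark: your argument shows that the bounds $A_g,B_g$ of $\La(\Om)$ are \emph{valid} frame bounds for every $\La\in X(\La(\Om))$, but to conclude they are \emph{optimal} (as in Proposition~\ref{prop:all shifts}) you also need the reverse inequality, which requires knowing that $\La(\Om)\in X(\La)$ for each such $\La$ --- i.e.\ minimality of the hull. This is true for model sets but is an extra ingredient beyond what you have written; it does not, however, affect the corollary itself.
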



Now, let $f_1,f_2 \in M^1(\R^d)$ be fixed and let $g_i,h_i\in M^1(\R^d)$ for $i=1,\ldots,M$. We define a function $\widetilde{\mathcal{N}} : X(\La(\Om)) \rightarrow \C$ through the mixed frame operator, as
\begin{equation*}
\widetilde{\mathcal{N}}(\La) = \inner{S_{g,h}^{\La}f_1}{f_2}\,.
\end{equation*}
Since, by Proposition~\ref{prop:continuity}, $S_{g,h}^{\La}$ is continuous, in the strong operator topology, over $X(\La(\Om))$, the function $\widetilde{\mathcal{N}}$ is continuous. As was presented in Section~\ref{sec:ms}, we can define from $\widetilde{\mathcal{N}}$ a function $\mathcal{N} :\R^m\rightarrow \C$, by
\begin{equation}\label{eq:main function}
\mathcal{N}(z) = \widetilde{\mathcal{N}}(\La(\Om)-z)\,,
\end{equation}
and since $\widetilde{\mathcal{N}}$ is continuous, $\mathcal{N}$ is local with respect to $\La(\Om)$. As was shown in Section~\ref{sec:ms}, it has a Fourier expansion
\begin{equation*}
\mathcal{N}(z) = \sum_{\eta\in\Ga^*} \widehat{\mathfrak{N}}(\eta) e^{-2\pi i p_1^*(\eta) \cdot z}
\end{equation*}
where
\begin{equation}\label{eq:fourier coeff}
\widehat{\mathfrak{N}}(\eta)  = \lim_{R\rightarrow \infty} \frac{1}{R^{2d}} \int_{B(0,R)} \mathcal{N}(z) e^{2\pi i z \cdot p_1^*(\eta)}\, dz\,.
\end{equation}
Applying the tools developed in Section~\ref{sec:bracket}, we will be able to compute the Fourier coefficients $\widehat{\mathfrak{N}}(\eta)$ of $\mathcal{N}$. 

Before we proceed further we introduce a sequence of auxiliary functions that will be crucial in proving our results. The following Lemma is a particular case of  Prop~$3.6$ in \cite{BM00}. 
\begin{lemma}\label{lemma:psi}
Let $0<\epsilon<1$, $\Om$ be a compact subset of $\R$, $\widetilde{\Om} = (1-\epsilon)\Om$ and $f_s =  \frac{\mathds{1}_{\epsilon^{s}\widetilde{\Om}}}{\abs{\epsilon^{s}\widetilde{\Om}}}$ for $s\in \N$. Then,
\begin{itemize}
\item[(i)] the infinite convolution product $\Convtext_{s=0}^{\infty} \, f_s$ converges in $L^1(\R)$ and defines a non-negative smooth function $\psi = \Convtext_{s=0}^{\infty} \, f_s$ compactly supported on $\Om$, with $\norm{\psi}_1 = 1$;
\item[(ii)] the Fourier transform of $\psi$ is the smooth function $\widehat{\psi} = \prod_{s=0}^{\infty} \,\widehat{f_s} $, with uniform convergence of the product, where $\widehat{f_s}(t) = \text{sinc}(t\,\abs{\epsilon^{s} \widetilde{\Om}})$ and $\text{sinc}(t) = \frac{\sin(\pi t)}{\pi t}$.
\end{itemize}
\end{lemma}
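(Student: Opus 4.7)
The plan is to establish everything through the partial products $\psi_N := f_0 \ast f_1 \ast \cdots \ast f_N$, show they converge in $L^1$ to the claimed $\psi$, and then read off (ii) via Fourier transform of convolution. First I would verify the elementary properties of each factor: $f_s \geq 0$, $\norm{f_s}_1 = 1$, $\text{supp}(f_s) \subseteq \epsilon^s \widetilde{\Om}$, and (viewing $\Om$ as a compact interval symmetric around the origin, consistent with the standing assumption) $\widehat{f_s}(t) = \text{sinc}(t \abs{\epsilon^s \widetilde{\Om}})$. These pass to $\psi_N$: non-negativity and $\norm{\psi_N}_1 = 1$ follow from Young's inequality (with equality for non-negative functions of unit mass), while the support lies in the Minkowski sum $\sum_{s=0}^N \epsilon^s \widetilde{\Om} \subseteq \bigl(\sum_{s=0}^\infty \epsilon^s\bigr)\widetilde{\Om} = \tfrac{1}{1-\epsilon}\widetilde{\Om} = \Om$.

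The key step for (i) is showing $\{\psi_N\}_{N\in\N}$ is Cauchy in $L^1(\R)$. For $m > n$, writing $g_{n,m} := f_{n+1} \ast \cdots \ast f_m$, which is a probability density supported in the shrinking set $E_n := \sum_{s=n+1}^m \epsilon^s \widetilde{\Om}$, gives
\[
\norm{\psi_m - \psi_n}_1 = \norm{\psi_n \ast g_{n,m} - \psi_n}_1 \leq \sup_{y \in E_n} \norm{T_y \psi_n - \psi_n}_1.
\]
The trick to control the right-hand side uniformly in $n$ is to fix any $n_0$ and, for $n \geq n_0$, use $\psi_n = \psi_{n_0} \ast g_{n_0,n}$ together with $\norm{g_{n_0,n}}_1 = 1$ to obtain $\norm{T_y \psi_n - \psi_n}_1 \leq \norm{T_y \psi_{n_0} - \psi_{n_0}}_1$. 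Since $\psi_{n_0} \in L^1$ has a vanishing modulus of translation continuity at the origin and $\text{diam}(E_n) \to 0$, the supremum tends to zero. Completeness of $L^1$ then yields the limit $\psi$, which inherits non-negativity, unit $L^1$-norm, and support in $\Om$; smoothness will come from (ii).

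For (ii), $L^1$-convergence $\psi_N \to \psi$ forces uniform convergence $\widehat{\psi_N} \to \widehat{\psi}$, while the convolution theorem gives $\widehat{\psi_N} = \prod_{s=0}^N \widehat{f_s}$. To identify the limit with the infinite product, I would show that on every compact set $K \subset \R$ the product $\prod_{s=0}^\infty \text{sinc}(t\, \epsilon^s\abs{\widetilde{\Om}})$ converges uniformly. Using $\text{sinc}(x) = 1 - (\pi x)^2/6 + O(x^4)$, one has $\abs{\log \text{sinc}(t\,\epsilon^s\abs{\widetilde{\Om}})} \leq C_K\,\epsilon^{2s}$ for all $s$ sufficiently large (independently of $t \in K$), and the geometric series converges; this yields uniform convergence of the partial products and hence $\widehat{\psi}(t) = \prod_{s=0}^\infty \widehat{f_s}(t)$. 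Smoothness of $\psi$ then follows from the observation that, using $\abs{\text{sinc}(x)} \leq \min(1, 1/(\pi \abs{x}))$, the product $\widehat{\psi}$ decays faster than any polynomial at infinity, so Fourier inversion places $\psi \in C^\infty$; combined with the compact support established in (i), this gives $\psi \in C_0^\infty(\Om)$.

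The main obstacle is the $L^1$ Cauchy argument, because $\psi_N$ itself depends on $N$ and an ordinary approximate-identity argument does not apply directly; the uniform-modulus-of-continuity reduction through a fixed $\psi_{n_0}$ is what makes the estimate tight. Everything else (Fourier-product convergence and smoothness) reduces to routine estimates once the $L^1$-limit is in hand.
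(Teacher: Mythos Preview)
Your argument is correct and complete. The paper itself does not prove this lemma at all: it merely records the statement as ``a particular case of Prop~3.6 in \cite{BM00}'' and moves on. So there is nothing to compare your approach against within the paper; you have supplied a self-contained proof where the author relies on an external reference.

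A couple of minor remarks. First, your interval assumption on $\Om$ is indeed needed for the $\widehat{f_s}=\text{sinc}$ formula to hold as stated, and it is consistent with the paper's standing hypothesis that $\Om$ is symmetric about the origin (together with the simple-model-set convention $n=1$); you were right to flag it. Second, the global uniform convergence of the product in (ii) follows immediately from your $L^1$-Cauchy step via $\norm{\widehat{\psi_N}-\widehat{\psi}}_\infty\le\norm{\psi_N-\psi}_1$, so the separate compact-set estimate is only there to identify the limit, not to establish convergence; you have this right but could say it more directly. The smoothness deduction from polynomial decay of $\widehat{\psi}$ is standard and sound.
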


Now, for $0<\epsilon<1$ and $\Om$ a compact subset of $\R$, we define a sequence of compact sets $\Om_n = (1-\epsilon^n)\Om$. The sets $\Om_n$ are increasing and $\overline{\bigcup_n \Om_n} = \Om$. Let $\psi_n$ be an infinite convolution product
\begin{equation}\label{eq:function psi_n}
\psi_n= \Conv_{s=0}^{\infty} \, \frac{\mathds{1}_{\epsilon^{ns}\Om_n}}{\abs{\epsilon^{ns}\Om_n}} = 
\frac{\mathds{1}_{\Om_n}}{\abs{\Om_n}} \ast \left ( \Conv_{s=1}^{\infty} \, \frac{\mathds{1}_{\epsilon^{ns}\Om_n}}{\abs{\epsilon^{ns}\Om_n}}\right)\,.
\end{equation}
Then, by Lemma~\ref{lemma:psi}, each $\psi_n$ is well defined and forms a  sequence of $C_0^\infty(\Om)$ non-negative functions, with Fourier transform of $\psi_n$ being
\begin{equation}\label{eq:function Fourier psi_n}
\widehat{\psi_n}(t) = \prod_{s=0}^{\infty} \,\text{sinc}(t\,\abs{\epsilon^{ns} \Om_n}) = 
\text{sinc}(t\,\abs{\Om_n})  \cdot \prod_{s=1}^{\infty} \, \text{sinc}(t\,\abs{\epsilon^{ns} \Om_n}) \,,.
\end{equation}

\begin{lemma}\label{lemma:psi_n}
With the above notation, 
\begin{itemize}
\item[(i)] the sequence $\{\psi_n\}_{n=1}^{\infty}$ converges pointwise to $\frac{\mathds{1}_\Om}{\abs{\Om}}$ on $\Om\setminus \partial \Om$, where $\partial \Om$ is the boundary of $\Om$;
\item[(ii)] the sequence of Fourier transforms, $\{\widehat{\psi_n}\}_{n=1}^{\infty}$, converges uniformly.
\end{itemize}
\end{lemma}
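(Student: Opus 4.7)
The approach is to exploit the two explicit forms in \eqref{eq:function psi_n} and \eqref{eq:function Fourier psi_n}. Set $\phi_n := \Conv_{s=1}^\infty \frac{\mathds{1}_{\epsilon^{ns}\Om_n}}{|\epsilon^{ns}\Om_n|}$, so that $\psi_n = \frac{\mathds{1}_{\Om_n}}{|\Om_n|} \ast \phi_n$; the guiding idea is that $\{\phi_n\}$ is an approximation of the identity concentrating at the origin, while the principal factor $\mathds{1}_{\Om_n}/|\Om_n|$ already converges to $\mathds{1}_\Om/|\Om|$ by the volume identity $|\Om_n|=(1-\epsilon^n)|\Om|$.

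For (i), I first note that each $\phi_n$ is non-negative with $\|\phi_n\|_1 = 1$ and is supported in the Minkowski sum $\sum_{s=1}^\infty \epsilon^{ns}\Om_n$, whose diameter is bounded by $\mathrm{diam}(\Om)\,\epsilon^n/(1-\epsilon^n) \to 0$. Fix $x \in \Om\setminus\partial\Om = \mathrm{int}(\Om)$ and $\delta>0$ with $B(x,\delta) \subset \mathrm{int}(\Om)$. Using $\Om_n = (1-\epsilon^n)\Om \nearrow \mathrm{int}(\Om)$ (valid for the symmetric window $\Om$ standing throughout the paper), both $B(x,\delta/2) \subset \Om_n$ and $\mathrm{supp}(\phi_n) \subset B(0,\delta/2)$ hold for $n$ large. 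Then $\mathds{1}_{\Om_n}(x-y)=1$ for all $y\in\mathrm{supp}(\phi_n)$, whence $\psi_n(x) = 1/|\Om_n| = 1/((1-\epsilon^n)|\Om|) \to 1/|\Om|$, which equals $\mathds{1}_\Om(x)/|\Om|$.

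For (ii), the cleanest route is to upgrade (i) to $L^1$-convergence and invoke the contraction $\F: L^1 \to C_0$. Since $\psi_n$ is compactly supported on $\Om$ by Lemma~\ref{lemma:psi}(i) and $\psi_n \leq 1/|\Om_n| \leq 1/((1-\epsilon)|\Om|)$, one has the uniform bound $|\psi_n - \mathds{1}_\Om/|\Om|| \leq C\,\mathds{1}_\Om$ with $C$ independent of $n$. Because $\partial\Om$ has measure zero (a standing hypothesis on windows) and (i) gives pointwise a.e.\ convergence, dominated convergence yields $\|\psi_n - \mathds{1}_\Om/|\Om|\|_1 \to 0$, hence $\|\widehat{\psi_n} - \mathrm{sinc}(\cdot\,|\Om|)\|_\infty \leq \|\psi_n - \mathds{1}_\Om/|\Om|\|_1 \to 0$, establishing uniform convergence of $\widehat{\psi_n}$ to $\mathrm{sinc}(t|\Om|) = \widehat{\mathds{1}_\Om/|\Om|}(t)$.

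The main obstacle I anticipate is verifying that the dilation chain $\Om_n = (1-\epsilon^n)\Om$ actually exhausts $\mathrm{int}(\Om)$ from inside, which requires $\Om$ to be star-shaped at the origin — true for the symmetric-interval windows standard in one-dimensional simple model sets, but not automatic for arbitrary symmetric compact sets. If one prefers to avoid the $L^1$-$C_0$ route, part (ii) can also be attacked directly through the product formula $\widehat{\psi_n}(t) = \mathrm{sinc}(t|\Om_n|)\prod_{s\geq 1}\mathrm{sinc}(t|\epsilon^{ns}\Om_n|)$: a compact/tail split at some $|t|=R$, combined with $|\mathrm{sinc}(s)|\leq 1/(\pi|s|)$ (to beat the tail against the bound $|R_n(t)|\leq 1$ for the infinite product) and $|\log\mathrm{sinc}(x)|\leq Cx^2$ for small $|x|$ (to control the product on $|t|\le R$), again delivers uniform convergence to $\mathrm{sinc}(t|\Om|)$, but by more computational means.
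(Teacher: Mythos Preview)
Your proof is correct and follows essentially the same strategy as the paper: for (i) both arguments recognize that the tail convolution $\phi_n$ has shrinking support and unit $L^1$-mass, so that $\psi_n(t_0)=1/|\Om_n|$ once $t_0$ lies well inside $\Om_n$; for (ii) both upgrade the pointwise convergence to $L^1$-convergence via dominated convergence and then use $\|\widehat{f}\|_\infty\le\|f\|_1$ (the paper phrases this through a Cauchy argument, you through direct convergence, but the content is identical). Your remark about needing $\Om$ star-shaped at the origin is a fair observation about a tacit assumption shared by the paper --- in the simple-model-set setting $\Om$ is a symmetric interval, so no issue arises.
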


\begin{proof}
Let $t_0\in \Om\setminus \partial \Om$ and $\delta>0$. By the properties of the sets $\Om_n$ we have: $\Om_m \subset \Om_n \subset \Om$ and $\epsilon^{ns} \Om_n \subset \epsilon^{ms} \Om_m$ for $n\geq m$ and $\epsilon^{ns} \Om_n \subset \epsilon^{n(s+1)} \Om_n$ for all $n$ and $s$. Then, there exists $N\geq 0$, such that for all $n\geq N$, $\abs{\Om \setminus \Om_n} < \delta \abs{\Om}^2$ and $t_0\in\Om_n$. That means, for $n\geq N$
\begin{equation*}
\left [ \left (\sum_{s=1}^{\infty} \text{supp }\frac{\mathds{1}_{\epsilon^{ns}\Om_n}}{\abs{\epsilon^{ns}\Om_n}} \right )- t_0\right ] \cap \Om_n = \left [\left (\sum_{s=1}^\infty \epsilon^{ns}\Om_n - t_0 \right )\right ] \cap \Om_n \neq  \emptyset\,.
\end{equation*}
Then, for all $n\geq N$,
\begin{align*}
\left |\psi_n(t_0) - \frac{\mathds{1}_{\Om}(t_0)}{\abs{\Om}} \right | &= \left | \left [\frac{\mathds{1}_{\Om_n}}{\abs{\Om_n}} \ast \left ( \Conv_{s=1}^{\infty} \, \frac{\mathds{1}_{\epsilon^{ns}\Om_n}}{\abs{\epsilon^{ns}\Om_n}}\right)\right ](t_0) - \frac{1}{\abs{\Om}} \right | \\
&= \left | \frac{1}{\abs{\Om_n}} \int_{\Om_n}  \left ( \Conv_{s=1}^{\infty} \, \frac{\mathds{1}_{\epsilon^{ns}\Om_n}}{\abs{\epsilon^{ns}\Om_n}}\right)(t_0-x)\, dx -  \frac{1}{\abs{\Om}} \right | \\
&\leq \left | \frac{1}{\abs{\Om_n}}\left \| \Conv_{s=1}^{\infty} \, \frac{\mathds{1}_{\epsilon^{ns}\Om_n}}{\abs{\epsilon^{ns}\Om_n}}\right \|_1 -  \frac{1}{\abs{\Om}} \right | \\
&= \left | \frac{1}{\abs{\Om_n}} -  \frac{1}{\abs{\Om}} \right | = \frac{\abs{\Om \setminus \Om_n}}{\abs{\Om_n}\abs{\Om}} \\
&< \frac{\delta}{(1-\epsilon^N)}\,,
\end{align*}
and claim $(i)$ follows.

For $(ii)$ it suffices to show that the sequence of Fourier transforms $\widehat{\psi_n}$ is uniformly Cauchy. By $(i)$ and Lemma~\ref{lemma:psi}, $\{\psi_n\}_{n=1}^{\infty}$ is a sequence of $L^1$ functions converging pointwise almost everywhere to $\frac{\mathds{1}_{\Om}}{\abs{\Om}} $.  Then by the $L^1$ Dominated Convergence Theorem, $\{\psi_n\}_{n=1}^{\infty}$ converges to $\frac{\mathds{1}_{\Om}}{\abs{\Om}} $ in $L^1$, and it follows that $\{\psi_n\}_{n=1}^{\infty}$  is an $L^1$ Cauchy sequence. Meaning, for $\delta>0$ there exists $N>0$ such that $\norm{\psi_n-\psi_m}_1 < \delta$ for all $n,m\geq N$.  Let $n,m\geq N$, then
\begin{equation*} 
\norm{\widehat{\psi_m} - \widehat{\psi_n}}_\infty \leq \norm{\psi_m - \psi_n}_1 < \delta\,,
\end{equation*}
and $\{\widehat{\psi_n}\}_{n=1}^{\infty}$ is uniformly Cauchy. By the completeness of $L^{\infty}(\R)$, it converges uniformly.
\end{proof}

Let, $\Psi$ denote the uniform limit of the sequence defined in \eqref{eq:function Fourier psi_n}. To make things more convenient later, we normalize $\Psi$, and define a new function
\begin{equation}\label{eq:function phi} 
\Phi = \abs{\Om} \cdot (\conv{\Psi}{\Psi})\,. 
\end{equation}
Note that $\Phi(0)=1$. 
 
The following observation will be the main ingredient in our approach. It is analogous to the results for lattices developed in Section~\ref{sec:gabor frames lattice}. With the above notation we have

\begin{proposition}\label{prop:limit function N}
Let $\La(\Om)$ be a simple generic model set. Assume that $g_i,h_i\in M^1(\R^d)$ for every $i=1,\ldots,M$. Then, for every $f_1,f_2\in M^1(\R^d)$, the function
\begin{equation*}
\mathcal{N}(z) = \sum_{i=1}^M \sum_{\la\in\La(\Om)} \innerBig{\pi(z) f_1}{\pi(\la) g_i} \innerBig{\pi(\la) h_i}{\pi(z) f_2}
\end{equation*}
is continuous and coincides pointwise with its Fourier series $\sum_{\eta\in\Ga^*} \widehat{\mathcal{N}}(\eta) e^{-2\pi i p_1^*(\eta) \cdot z}$, with
\begin{equation*}
\widehat{\mathcal{N}}(\eta) = D(\La(\Om)) \, \Phi(-p_2^*(\eta)) \innerBig{\pi\big (J p_1^*(\eta)\big) f_1}{f_2}\,\sum_{i=1}^M \innerBig{h_i}{\pi\big(J p_1^*(\eta)\big) g_i} \,.
\end{equation*}
where $\eta\in \Ga^\ast$, $\Phi$ defined in \eqref{eq:function phi}.
\end{proposition}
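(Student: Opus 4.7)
The plan is to realize $\mathcal{N}$ as a pointwise limit of the weighted functions $|\Om|^2\mathcal{N}^{\psi_n}$ from Proposition~\ref{prop:function N}, and then transport their Fourier expansion through the limit. First, combining Proposition~\ref{prop:operator shifts} with unitarity of $\pi(z)$ gives
$$\mathcal{N}(z) = \innerBig{S_{g,h}^{\La(\Om)-z}f_1}{f_2} = \widetilde{\mathcal{N}}(\La(\Om)-z),\qquad \widetilde{\mathcal{N}}(\La):=\innerBig{S_{g,h}^{\La}f_1}{f_2},$$
and Proposition~\ref{prop:continuity} ensures that $\widetilde{\mathcal{N}}$ is continuous on the local hull $X(\La(\Om))$. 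By the locality/almost-periodicity correspondence developed in Section~\ref{sec:ms}, $\mathcal{N}$ is then local with respect to $\La(\Om)$ and hence almost periodic on $\Rtd$, so it admits a formal Fourier series with frequencies in $p_1^*(\Ga^*)$.

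Next, I take the sequence $\{\psi_n\}\subset C_0^\infty(\Om)$ constructed in Lemma~\ref{lemma:psi_n}. Since $\|\psi_n\|_\infty$ is uniformly bounded and $\psi_n\to \mathds{1}_\Om/|\Om|$ pointwise off $\partial\Om$, dominated convergence yields $\psi_n^2\to \mathds{1}_\Om/|\Om|^2$ in $L^1(\R)$ and hence uniform convergence $\widehat{\psi_n^2}\to \Psi*\Psi = \Phi/|\Om|$ on $\R$; here I use $\Psi=\widehat{\mathds{1}_\Om}/|\Om|$, so that $\widehat{\Psi}=\mathds{1}_\Om/|\Om|$ and $\widehat{\Psi*\Psi}=\mathds{1}_\Om/|\Om|^2$. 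Genericity of $\La(\Om)$ forces $p_2(\ga)\in\Om\setminus\partial\Om$ for every $\la=p_1(\ga)\in\La(\Om)$, and therefore $w_{\psi_n}(\la)^2\to 1/|\Om|^2$ for every $\la$. The uniform bound $w_{\psi_n}(\la)^2\leq C$ together with
$$\sum_{i,\la}\absbig{\A(f_1,g_i)(\la-z)}\,\absbig{\A(f_2,h_i)(\la-z)}<\infty,$$
which follows from $\A(f_j,k_i)\in\mathbf{W}(C_0,\lo)(\Rtd)$ and relative separation of $\La(\Om)$, then gives by dominated convergence over $\la$
$$|\Om|^2\,\mathcal{N}^{\psi_n}(z)\;\longrightarrow\;\mathcal{N}(z)\qquad\text{pointwise in }z\in\Rtd.$$

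From Proposition~\ref{prop:function N} the scaled coefficient
$$|\Om|^2\widehat{\mathcal{N}^{\psi_n}}(\eta) = |\Om|\,D(\La(\Om))\,\widehat{\psi_n^2}(-p_2^*(\eta))\,\innerBig{\pi(Jp_1^*(\eta))f_1}{f_2}\sum_{i=1}^M\innerBig{h_i}{\pi(Jp_1^*(\eta))g_i}$$
converges, by the uniform limit of the preceding paragraph, to the expression claimed in the proposition. To identify it with the true Fourier coefficient $\widehat{\mathfrak{N}}(\eta)$ of $\mathcal{N}$ I apply the Birkhoff formula \eqref{eq:fourier coeff}: multiply the above pointwise convergence by $e^{2\pi i p_1^*(\eta)\cdot z}$, integrate over $B(0,R)/R^{2d}$, and swap the limits in $R$ and $n$, which is justified by the $n$-uniform $L^\infty_z$-bound on $\mathcal{N}^{\psi_n}$ coming from the $\lo$-summability of the ambiguity functions.

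The main obstacle is to upgrade the identification of Fourier coefficients into a \emph{pointwise} equality of $\mathcal{N}$ with its Fourier series. The factor $\Phi(-p_2^*(\eta))$ is only sinc-like and fails to be $\lo$-summable over $p_2^*(\Ga^*)\subset\R$, so absolute convergence does not drop out as directly as in Proposition~\ref{prop:function N}. I would resolve this by viewing the summand as an evaluation of a joint function on the lattice $\Ga^*\subset\Rtd\times\R$ and invoking a tensor-product $M^1$-estimate, using $\psi_n\otimes\A(f_j,k_i)\in M^1(\R\times\Rtd)$ combined with the Wiener-amalgam summability argument that appears in the proof of Proposition~\ref{prop:ap bracket}. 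The resulting $n$-uniform $\lo(\Ga^*)$-bound on the pre-limit coefficients survives the passage $n\to\infty$ and yields the claimed absolute, hence pointwise, convergence of the Fourier series.
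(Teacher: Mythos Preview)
Your overall plan---approximate $\mathcal{N}$ by the weighted functions $|\Om|^2\mathcal{N}^{\psi_n}$ from Proposition~\ref{prop:function N}, use genericity of $\La(\Om)$ and dominated convergence to get pointwise convergence $|\Om|^2\mathcal{N}^{\psi_n}\to\mathcal{N}$, and pass the Fourier expansions through the limit---is exactly the paper's strategy. The difference is in how the limit of the Fourier expansion is identified with $\mathcal{N}$. The paper does \emph{not} go through the local-hull/Birkhoff machinery: instead it argues directly that the target series $\sum_{\eta}\widehat{\mathcal{N}}(\eta)e^{-2\pi i p_1^*(\eta)\cdot z}$ converges absolutely (by the same $M^1$/amalgam argument as in Proposition~\ref{prop:function N}), and then, using only the \emph{uniform} convergence $\widehat{\psi_n^2}\to\Psi*\Psi$, shows that $\mathcal{N}^{\psi_n}$ converges uniformly to $|\Om|^{-2}$ times this series. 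Since the same sequence converges pointwise to $|\Om|^{-2}\mathcal{N}$, uniqueness of limits finishes the proof. This bypasses your Birkhoff step entirely, and in particular avoids the interchange $\lim_R\lim_n=\lim_n\lim_R$, for which your stated justification (an $n$-uniform $L^\infty_z$ bound on $\mathcal{N}^{\psi_n}$) is not sufficient on its own---a mere uniform bound does not license swapping iterated limits.

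You are right that absolute convergence of the target series is the crux, and the paper also leans on it (via a reference back to the argument of Proposition~\ref{prop:function N}). Your proposed fix, however, has a gap: the Wiener-amalgam bound coming from $\psi_n\otimes\A(f_j,k_i)\in M^1(\R\times\Rtd)$ is controlled by $\|\psi_n\|_{M^1}$, and since $\psi_n\to\mathds{1}_\Om/|\Om|\notin M^1(\R)$ in $L^1$, there is no reason to expect $\sup_n\|\psi_n\|_{M^1}<\infty$. Thus the ``$n$-uniform $\lo(\Ga^*)$-bound on the pre-limit coefficients'' does not follow from the $M^1$ tensor estimate you invoke, and the Fatou-type passage to the limit is not justified as written. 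If you drop the Birkhoff detour and adopt the paper's uniqueness-of-limits argument, the only outstanding point is the absolute convergence of $\sum_\eta\widehat{\mathcal{N}}(\eta)$, which should be argued directly (as the paper does) rather than through an $n$-uniform bound on the approximants.
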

We can approximate function $\mathcal{N}$ with the desired accuracy by an almost periodic function whose spectrum lies in a 'dual' model set. Let $\epsilon>0$ and $C=\max{\norm{\V_{g_i} h_i}_{\mathbf{W}(L^\infty,\lt)}}$. The function $\Phi$ decays rapidly outside its essential support, hence, for the essential support, we can choose a compact interval $\widetilde{\Om}_\epsilon$, depending on the windows $g_i,h_i$, such that 
\begin{equation}\label{eq:estimate}
\normBig{\Phi \mathds{1}_{\widetilde{\Om}\epsilon^c}}_{\mathbf{W}(\R)} < \frac{\epsilon}{D(\La(\Om))\, \text{rel}(\Ga^*)\, C\, M}
\end{equation} 
where $\absbig{\Phi}^*(x) = \absbig{\Phi}(-x)$ is the involution and $\widetilde{\Om}_\epsilon^c$ a complement of $\widetilde{\Om}_\epsilon$. Let us define an $\epsilon-$dual model set $\La^*(\widetilde{\Om}_\epsilon)$ originating from $\Ga^*$ and $\widetilde{\Om}_\epsilon$ by
\begin{equation}\label{eq:epsilon dula model set}
\La^*(\widetilde{\Om}_\epsilon) = \left \{\be = p_1^*(\eta)\,:\, \eta\in\Ga^*,\, p_2^*(\eta) \in \widetilde{\Om}_\epsilon \right \}
\end{equation}
We note here, that the concept of an $\epsilon$-dual model set defined here differs from the original $\epsilon$-dual model set definition by Meyer.  Then, the function $\mathcal{N}_{\epsilon}$, given by the series 
\begin{equation*}
\mathcal{N}_\epsilon (z) := \sum_{\be\in\La^*(\widetilde{\Om}_\epsilon)} \widehat{\mathcal{N}}(\be) e^{-2\pi i \be \cdot z}\,,
\end{equation*}
defines an almost periodic function with spectrum in $\La^*(\widetilde{\Om}_\epsilon)$. Moreover, by Cauchy-Schwarz inequality, we have 
\begin{align*}
\normbig{&\mathcal{N} - \mathcal{N}_\epsilon}_\infty \leq  D(\La(\Om)) \sum_{i=1}^M \sum_{\eta\in\Ga^*;\, p_2^*(\eta)\notin \widetilde{\Om}_\epsilon} \absBig{\Phi(-p_2^*(\eta)) \innerBig{\pi\big (J p_1^*(\eta)\big) f_1}{f_2}\,\innerBig{h_i}{\pi\big(J p_1^*(\eta)\big) g_i}}\\
&\leq D(\La(\Om))\sum_{i=1}^M \left [ \sum_{\eta\in\Ga^*} \Big (\Big(\absbig{\Phi}^*\mathds{1}_{\widetilde{\Om}_\epsilon^c} \Big)\otimes \U\absbig{\V_{g_i} h_i}^2 \Big)(\eta)\right]^{1/2}\left [ \sum_{\eta\in\Ga^*} \Big( \Big(\absbig{\Phi}^*\mathds{1}_{\widetilde{\Om}_\epsilon^c}\Big ) \otimes \U\absbig{\V_{f_1} f_2}^2 \Big)(\eta)\right]^{1/2}\\
&\leq D(\La(\Om)) \, \text{rel}(\Ga^*)\,\sum_{i=1}^M \normBig{\Big(\absbig{\Phi}^* \mathds{1}_{\widetilde{\Om}_\epsilon^c}\Big ) \otimes \U \absbig{\V_{g_i} h_i}^2}_{\mathbf{W}(\R\times \Rtd)}^{1/2} \normBig{\Big(\absbig{\Phi}^* \mathds{1}_{\widetilde{\Om}_\epsilon^c}\Big ) \otimes \U \absbig{\V_{f_1} f_2}^2}_{\mathbf{W}(\R\times \Rtd)}^{1/2}\,.
\end{align*}
By the property of tensor product and \eqref{eq:estimate}, we obtain
\begin{align*}
\normbig{\mathcal{N} - \mathcal{N}_\epsilon}_\infty &\leq D(\La(\Om))\, \text{rel}(\Ga^*)\, \normBig{\Phi \mathds{1}_{\widetilde{\Om}_\epsilon^c}}_{\mathbf{W}(\R)} \sum_{i=1}^M  \normbig{\abs{V_{g_i} h_i}^2}_{\mathbf{W}(\Rtd)}\, \normbig{\abs{V_{f_1} f_2}^2}_{\mathbf{W}(\Rtd)}\\
&\leq D(\La(\Om))\, \text{rel}(\Ga^*)\, \normBig{\Phi \mathds{1}_{\widetilde{\Om}_\epsilon^c}}_{\mathbf{W}(\R)} \sum_{i=1}^M  \normbig{V_{g_i} h_i}_{\mathbf{W}(L^\infty,\lt)(\Rtd)}\, \normbig{V_{f_1} f_2}_{\mathbf{W}(L^\infty,\lt)(\Rtd)}\\
&< \epsilon \, \normbig{V_{f_1} f_2}_{\mathbf{W}(L^\infty,\lt)(\Rtd)}\,.
\end{align*}

\begin{proof}[Proof of Proposition~\ref{prop:limit function N}]
Let $f_1,f_2\in M^1(\R^d)$. Let $\psi_n$ be a  sequence of $C_0^\infty(\Om)$ non-negative functions defined in \eqref{eq:function psi_n}. By Proposition~\ref{prop:function N}, for each $n\in\N$, the functions
\begin{equation*}
\mathcal{N}^{\psi_n}(z) = \sum_{i=1}^M \sum_{\la\in\La(\Om)} w_{\psi_n}(\la)^2\, \innerBig{\pi(z) f_1}{\pi(\la) g_i} \innerBig{\pi(\la) h_i}{\pi(z) f_2}
\end{equation*}
are well defined almost periodic functions that are pointwise equal to their Fourier series $\sum_{\eta\in\Ga^*} \widehat{\mathcal{N}^{\psi_n}}(\eta) e^{-2\pi i p_1^*(\eta) \cdot z}$, where $\widehat{\mathcal{N}^{\psi_n}}(\eta)$ are given by
\begin{equation*}
\widehat{\mathcal{N}^{\psi_n}}(\eta) =  \text{vol}(\Ga)^{-1} \, \widehat{\psi_n^2}(-p_2^*(\eta))\, \innerBig{\pi\big (J p_1^*(\eta)\big) f_1}{f_2}\,\sum_{i=1}^M \innerBig{h_i}{\pi\big(J p_1^*(\eta)\big) g_i}\,.
\end{equation*}

On the other hand, the series $\sum_{\eta\in\Ga^*} \widehat{\mathcal{N}}(\eta) e^{-2\pi i p_1^*(\eta) \cdot z}$ converges absolutely (by a similar argument as in the proof of Proposition~\ref{prop:function N}) and gives rise to a uniformly continuous function. By the uniform convergence of $\widehat{\psi_n^2}$ to $\conv{\Psi}{\Psi}$, it can be easily verified that $\mathcal{N}^{\psi_n}$ converges uniformly to $\abs{\Om}^{-2}\,\sum_{\eta\in\Ga^*} \widehat{\mathcal{N}}(\eta) e^{-2\pi i p_1^*(\eta) \cdot z}$, since $D(\La(\Om)) = \text{vol}(\Ga)^{-1} \abs{\Om}$ and $\Phi= \abs{\Om} \cdot (\conv{\Psi}{\Psi})$.

Now, since $\La(\Om)$ is genereic, that is the boundary $\partial \Om$ of $\Om$ has no common points with $p_2(\Ga)$,  and $\psi_n$ converges pointwise to $\frac{\mathds{1}_\Om}{\abs{\Om}}$ on $\Om\setminus \partial \Om$, by Lemma~\ref{lemma:psi}, we show that $\mathcal{N}(z)$ is a pointwise limit of $\abs{\Om}^2 \,\mathcal{N}^{\psi_n}(z)$. Indeed, by the Lebesgue Dominated Convergence Theorem, we can move the limit inside the sum, and for every $z\in\Rtd$, we have
\begin{align*}
\lim_{n\rightarrow \infty} \mathcal{N}^{\psi_n}(z) &=  \sum_{i=1}^M \sum_{\la\in\La(\Om)} \lim_{n\rightarrow \infty} w_{\psi_n}^2(\la) \,  \innerBig{\pi(z) f_1}{\pi(\la) g_i} \innerBig{\pi(\la) h_i}{\pi(z) f_2}\\
& = \sum_{i=1}^M \sum_{\la\in\La(\Om)} \lim_{n\rightarrow \infty} \psi_n^2(p_2(\ga)) \,  \innerBig{\pi(z) f_1}{\pi(\la) g_i} \innerBig{\pi(\la) h_i}{\pi(z) f_2} \quad (\la=p_1(\ga),\, \ga\in\Ga)\\
&= \sum_{i=1}^M \sum_{\la\in\La(\Om)} \abs{\Om}^{-2}\big [ \mathds{1}_{\Om}(p_2(\ga))\big ]^2 \, \innerBig{\pi(z) f_1}{\pi(\la) g_i} \innerBig{\pi(\la) h_i}{\pi(z) f_2}\\
& =  \abs{\Om}^{-2} \mathcal{N}(z)\,.
\end{align*}
By the uniqueness of the the limits, we must have $\mathcal{N}(z) = \sum_{\eta\in\Ga^*} \widehat{\mathcal{N}}(\eta) e^{-2\pi i p_1^*(\eta) \cdot z}$, and by the uniqueness of the Fourier series, \eqref{eq:series} is the Fourier series of $\mathcal{N}(z)$.
\end{proof}

For $g_i,h_i\in S_(\R^d)$ with $i=1,\ldots,M$, the function $\mathcal{N}(z)$ of Proposition~\ref{prop:limit function N} coincides with the function $\mathcal{N}(z)$ defined in \eqref{eq:main function}. Indeed, using Proposition~\ref{prop:operator shifts}, we can write $\mathcal{N}(z)$ from \eqref{eq:main function} explicitly as
\begin{align*}
\mathcal{N}(z) &=  \widetilde{\mathcal{N}}(\La(\Om)-z) = \innerBig{S_{g,h}^{\La(\Om)-z}f_1}{f_2} = \innerBig{S_{g,h}^{\La(\Om)}\pi(z)f_1}{\pi(z) f_2}\\ 
&= \sum_{i=1}^M \sum_{\la\in\La(\Om)} \innerBig{\pi(z) f_1}{\pi(\la) g_i} \innerBig{\pi(\la) h_i}{\pi(z) f_2}\,.
\end{align*}
By the uniqueness of the Fourier coefficients, $\widehat{\mathfrak{N}}(\eta)$ in \eqref{eq:fourier coeff} equal $\widehat{\mathcal{N}}(\eta)$ from Proposition~\ref{prop:limit function N}, for all $\eta\in\Ga^*$.

We are now in the position to state the main results.
\begin{theorem}\label{thm:main thm}
Let $\La(\Om)$ be a simple generic model set. Then for $g_i,h_i\in M^1(\R^d)$, $i=1,\ldots,M$ the following hold.
\begin{itemize}
\item[i)] Fundamental Identity of Gabor Analysis for Model Sets:
\begin{align}\label{eq:FIGAMS}
\sum_{i=1}^M \sum_{\la\in\La(\Om)} &\inner{f_1}{\pi(\la)g_i}\inner{\pi(\la)h_i}{f_2} \nonumber\\
&=  D(\La(\Om)) \sum_{\eta\in\Ga^*} \Phi(-p_2^*(\eta)) \innerBig{\pi\big (J p_1^*(\eta)\big) f_1}{f_2}\,\sum_{i=1}^M \innerBig{h_i}{\pi\big(J p_1^*(\eta)\big) g_i}
\end{align}
for all $f_1,f_2\in M^1(\R^d)$.
\item[ii)] Janssen Representation:
\begin{equation}\label{eq:Janssen MS}
S_{g,h}^{\La(\Om)} =D(\La(\Om)) \sum_{i=1}^M \sum_{\eta\in\Ga^*} \Phi(-p_2^*(\eta)) \, \innerBig{h_i}{\pi\big(J p_1^*(\eta)\big) g_i}\, \pi\big (J p_1^*(\eta)\big)\,,
\end{equation}\
where the series converges unconditionally in the strong operator topology.
\item[iii)] Wexler-Raz Biorthogonality Relations:
\begin{equation}\label{eq:WR MS}
S_{g,h}^{\La(\Om)} = I \mbox{  on $M^1(\R^d)$} \quad   \Longleftrightarrow \quad D(\La(\Om)) \sum_{i=1}^M \Phi(-p_2^*(\eta)) \, \innerBig{h_i}{\pi\big(J p_1^*(\eta)\big) g_i} = \delta_{\eta,0}
\end{equation}
for $\eta\in\Ga^*$.
\end{itemize}
\end{theorem}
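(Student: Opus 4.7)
The plan is to derive all three statements from Proposition~\ref{prop:limit function N} in essentially the same way the lattice counterparts were derived from Proposition~\ref{prop:function N lattice}, using the displayed identity
$\mathcal{N}(z) = \inner{S_{g,h}^{\La(\Om)}\pi(z)f_1}{\pi(z)f_2}$
that was established (via Proposition~\ref{prop:operator shifts}) right before the theorem statement. Since $\mathcal{N}$ is continuous and equals its Fourier series pointwise, evaluating at $z=0$ converts any identity on $\mathcal{N}$ into one on the mixed frame operator, and the Fourier coefficients $\widehat{\mathcal{N}}(\eta)$ supply precisely the right-hand side of FIGA.

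First, for part $i)$, fix $f_1,f_2\in M^1(\R^d)$ and evaluate $\mathcal{N}(0)$. The left-hand side of \eqref{eq:FIGAMS} is exactly $\mathcal{N}(0)$, while setting $z=0$ in the Fourier series of Proposition~\ref{prop:limit function N} produces the right-hand side of \eqref{eq:FIGAMS}. For part $ii)$, the same evaluation at $z=0$ gives
\begin{equation*}
\inner{S_{g,h}^{\La(\Om)}f_1}{f_2} = \sum_{\eta\in\Ga^*}\widehat{\mathcal{N}}(\eta)
= \innerBig{D(\La(\Om))\sum_{i=1}^M\sum_{\eta\in\Ga^*} \Phi(-p_2^*(\eta))\,\inner{h_i}{\pi(Jp_1^*(\eta))g_i}\,\pi(Jp_1^*(\eta))f_1}{f_2}\,,
\end{equation*}
for all $f_1,f_2\in M^1(\R^d)$, which is the Janssen representation \eqref{eq:Janssen MS} tested against $M^1$. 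Unconditional convergence in the strong operator topology on $M^1(\R^d)$ follows from the absolute convergence of the Fourier series of $\mathcal{N}$ observed in the proof of Proposition~\ref{prop:limit function N} (it uses $\V_{\psi\otimes \A(f_2,h_i)}(\psi\otimes \A(f_1,g_i)) \in \mathbf{W}(C_0,\lo)$), so the usual density argument extends the formula.

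For part $iii)$, the implication $\Longleftarrow$ is immediate from \eqref{eq:Janssen MS}. For the converse, assume $S_{g,h}^{\La(\Om)}=I$ on $M^1(\R^d)$. Then for all $f_1,f_2\in M^1(\R^d)$,
\begin{equation*}
\mathcal{N}(z) = \inner{S_{g,h}^{\La(\Om)}\pi(z)f_1}{\pi(z)f_2} = \inner{\pi(z)f_1}{\pi(z)f_2} = \inner{f_1}{f_2}\,,
\end{equation*}
so $\mathcal{N}$ is constant, forcing $\widehat{\mathcal{N}}(\eta)=0$ for every $\eta\neq 0$ and $\widehat{\mathcal{N}}(0)=\inner{f_1}{f_2}$. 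Writing this out using Proposition~\ref{prop:limit function N} yields
\begin{equation*}
D(\La(\Om))\,\Phi(-p_2^*(\eta))\,\inner{\pi(Jp_1^*(\eta))f_1}{f_2}\,\sum_{i=1}^M\inner{h_i}{\pi(Jp_1^*(\eta))g_i} = \delta_{\eta,0}\inner{f_1}{f_2}\,.
\end{equation*}
Fixing $\eta\in\Ga^*$ and choosing $f_1 = \pi(-Jp_1^*(\eta))f$, $f_2=f$ for a nonzero $f\in M^1(\R^d)$ makes $\inner{\pi(Jp_1^*(\eta))f_1}{f_2}=\norm{f}_2^2\neq 0$, and \eqref{eq:WR MS} follows after dividing out (noting $\Phi(0)=1$ covers the case $\eta=0$). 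The only step that requires genuine care beyond mimicking the lattice proof is verifying that the operator identity in $ii)$ really does hold in the strong operator topology on $M^1(\R^d)$ rather than only weakly; this comes for free from the absolute convergence already embedded in Proposition~\ref{prop:limit function N} combined with the boundedness estimates for $C_{g,\La}$ and $D_{g,\La}$ on $M^p$ spaces.
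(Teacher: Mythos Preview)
Your proposal is correct and follows essentially the same route as the paper's proof: parts $i)$ and $ii)$ come from evaluating the function $\mathcal{N}$ of Proposition~\ref{prop:limit function N} at $z=0$, and part $iii)$ by observing that $S_{g,h}^{\La(\Om)}=I$ forces $\mathcal{N}$ to be constant, hence with vanishing nonzero Fourier coefficients. The only cosmetic difference is in the final step of $iii)$: the paper takes $f_1=f_2=f$ a Gaussian (so that $\inner{\pi(Jp_1^*(\eta))f}{f}\neq 0$ for every $\eta$), whereas you shift $f_1$ by $\pi(-Jp_1^*(\eta))$ as in the lattice proof; both choices are valid, though note your inner product equals $\norm{f}_2^2$ only up to a unimodular phase.
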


The relation \eqref{eq:FIGAMS}, as well as \eqref{eq:Janssen MS}, can be written using dual model sets, and giving a better understanding of the above relations to the ones for regular lattices. Let $\widetilde{\Om}$ be a compact subset of $\R$, equal closure of its interior and with measure of the boundary equal to zero. We define a sequence of dual model sets as
\begin{equation*}
\La^*_m(\widetilde{\Om}) = \left \{\beta=p_1^*(\eta)\,:\,\eta\in\Ga^*,\, p_2^*(\eta) \in m\widetilde{\Om}\setminus (m-1)\widetilde{\Om} \right \}\,.
\end{equation*}
Then, the right hand side of  \eqref{eq:FIGAMS} equals
\begin{align*}
D(\La(\Om)) &\sum_{\eta\in\Ga^*} \Phi(-p_2^*(\eta)) \innerBig{\pi\big (J p_1^*(\eta)\big) f_1}{f_2}\,\sum_{i=1}^M \innerBig{h_i}{\pi\big(J p_1^*(\eta)\big) g_i}\\
&= D(\La(\Om)) \sum_{m=1}^\infty \sum_{\beta \in \La^*_m(\widetilde{\Om})} w_{\Phi}(-\beta) \innerBig{\pi\big (J \beta \big) f_1}{f_2}\,\sum_{i=1}^M \innerBig{h_i}{\pi\big(J \beta \big) g_i}\,,
\end{align*}
where $w_{\Phi}$ is defined as in \eqref{def:w_psi} for the function $\Phi$. Since $\Phi$ is well concentrated, with rapid decay outside its essential support, the sum over $m$ has only a finite number of relevant terms, and we can approximate \eqref{eq:FIGAMS} as
\begin{align*}
\sum_{i=1}^M \sum_{\la\in\La(\Om)} &\inner{f_1}{\pi(\la)g_i}\inner{\pi(\la)h_i}{f_2} \\
&\approx  D(\La(\Om)) \sum_{m=1}^M \sum_{\be\in \La^*_m(\widetilde{\Om})} w_{\Phi}(-\be) \innerBig{\pi\big (J \be\big) f_1}{f_2}\,\sum_{i=1}^M \innerBig{h_i}{\pi\big(J \be \big) g_i}\,.
\end{align*}

Approximation depends on $M$ and $\widetilde{\Om}$. We can choose $\widetilde{\Om}$ to be an  $\epsilon-$essential support of $\Phi$ in the sense of \eqref{eq:estimate}, that is $\widetilde{\Om} = \widetilde{\Om}_\epsilon$ and we have $\La^*_1(\widetilde{\Om}) = \La^*(\widetilde{\Om}_\epsilon)$ as in \eqref{eq:epsilon dula model set}.  Then we obtain a good approximation already for $M=1$
\begin{itemize}
\item[i)] Fundamental Identity of Gabor Analysis for Model Sets becomes
\begin{align*}
\sum_{i=1}^M \sum_{\la\in\La(\Om)} \inner{f_1}{\pi(\la)g_i} &\inner{\pi(\la)h_i}{f_2} \nonumber\\
&\approx  D(\La(\Om)) \sum_{\be\in \La^*(\widetilde{\Om}_\epsilon)} w_{\Phi}(-\be) \innerBig{\pi(J \be) f_1}{f_2}\,\sum_{i=1}^M \innerBig{h_i}{\pi(J \be) g_i}\,.
\end{align*}
\item[ii)] Janssen Representation gives us an approximation of the frame operator in the form of
\begin{equation*}
S_{g,h}^{\La(\Om)} \approx D(\La(\Om)) \sum_{i=1}^M  \sum_{\be\in \La^*(\widetilde{\Om}_\epsilon)} w_{\Phi}(-\be) \, \innerBig{h_i}{\pi(J \be) g_i}\, \pi(J \be)\,.
\end{equation*}
\item[iii)] Wexler-Raz Biorthogonality Relations provide an approximation to the identity operator:
\begin{equation*}
S_{g,h}^{\La(\Om)} \approx I \mbox{  on $M^1(\R^d)$} \quad   \Longleftrightarrow \quad D(\La(\Om)) \sum_{i=1}^M w_{\Phi}(-\be) \, \innerBig{h_i}{\pi(J \be) g_i} = \delta_{\be,0}
\end{equation*}
for $\be\in\La^*(\widetilde{\Om}_\epsilon)$.
\end{itemize}
These relations resemble the relations of Gabor systems for lattices, where there is a connection between a lattice and its dual (or symplectic dual). Here $\La(\Om)$ takes the place of a lattice, and an $\epsilon-$dual model set $\La^*(\widetilde{\Om}_\epsilon)$ takes the place of a dual lattice, and depends on the desired accuracy of the approximation and window functions $g_i,h_i$.

\begin{proof}
The Fundamental Identity of Gabor Analysis and Janssen representation follow directly from Proposition~\ref{prop:limit function N}. Let $f_1,f_2\in M^1(\R^d)$, then the left hand side of \eqref{eq:FIGAMS} equals the function $\mathcal{N}$ of Proposition~\ref{prop:limit function N} evaluated at $z=0$. Since $\mathcal{N}$ equals its Fourier series expansion, we have 
\begin{equation*}
\mathcal{N}(0) =   D(\La(\Om)) \sum_{\eta\in\Ga^*} \Phi(-p_2^*(\eta)) \innerBig{\pi\big (J p_1^*(\eta)\big) f_1}{f_2}\,\sum_{i=1}^M \innerBig{h_i}{\pi\big(J p_1^*(\eta)\big) g_i}\,.
\end{equation*}
which gives \eqref{eq:FIGAMS}. 

As for Janssen representation, we observe that $\mathcal{N}(z) = \inner{S_{g,h}^{\La(\Om)} \pi(z)f_1}{\pi(z) f_2}$ for fixed $f_1,f_2\in M^1(\R^d)$. Evaluating $\mathcal{N}$ at $z=0$ and using the Fourier series representation of $\mathcal{N}$, we obtain
\begin{align*}
\inner{S_{g,h}^{\La(\Om)} f_1}{f_2} &= D(\La(\Om)) \sum_{\eta\in\Ga^*} \Phi(-p_2^*(\eta)) \innerBig{\pi\big (J p_1^*(\eta)\big) f_1}{f_2}\,\sum_{i=1}^M \innerBig{h_i}{\pi\big(J p_1^*(\eta)\big) g_i}\\
&= \innerBig{D(\La(\Om)) \sum_{\eta\in\Ga^*} \Phi(-p_2^*(\eta)) \, \sum_{i=1}^M \innerBig{h_i}{\pi\big(J p_1^*(\eta)\big) g_i} \pi\big (J p_1^*(\eta)\big) f_1}{f_2}\,,
\end{align*}
which is the Janssen representation of the frame operator $S_{g,h}^{\La(\Om)}$.

The implication $\Longleftarrow$ of $iii)$ follows trivially from the Janssen representation of $S_{g,h}^{\La(\Om)}$. For the converse, assume that $S_{g,h}^{\La(\Om)} = I$. Let $f_1,f_2 \in M^1(\R^d)$, then $\mathcal{N}$ of Proposition~\ref{prop:function N lattice} is a constant function. Indeed, 
\begin{equation*}
\inner{f_1}{f_2} = \inner{\pi(z)f_1}{\pi(z)f_2} = \inner{S_{g,h}^{\La(\Om)} \pi(z)f_1}{\pi(z)f_2} = \mathcal{N}(z)
\end{equation*} 
for every $z\in\Rtd$. Since $\mathcal{O} = \mathcal{N} - \inner{f_1}{f_2}$ is a zero function, all its Fourier coefficients
\begin{equation*}
\widehat{\mathcal{O}}(\eta) = \left \{ \begin{array}{lc} \widehat{\mathcal{N}}(\eta) - \inner{f_1}{f_2}, & \eta=0 \\
\widehat{\mathcal{N}}(\eta), & \eta \neq 0\end{array} \right .
\end{equation*}
are zero. By Proposition~\ref{prop:limit function N}, we have then
\begin{equation}\label{eq:rel2}
D(\La(\Om)) \sum_{i=1}^M \Phi(-p_2^*(\eta)) \, \innerBig{h_i}{\pi\big(J p_1^*(\eta)\big) g_i}\, \innerBig{\pi\big(J p_1^*(\eta)\big) f_1}{f_2} = \delta_{\eta,0} \inner{f_1}{f_2}\,.
\end{equation}
Fix $J p_1^*(\eta)\in \Rtd$ and let $f$ be a Gausian, that is $f(x) = e^{-\pi x^2}$. Then $\innerBig{\pi\big(J p_1^*(\eta)\big) f_1}{f_2} = \innerBig{\pi\big(J p_1^*(\eta)\big) f}{f} \neq 0$ and the right hand side of \eqref{eq:WR MS} holds.
\end{proof}

As a consequence of the Wexler-Raz biorthogonality relations we obtain a density result for Gabor systems for model sets.

\begin{proposition}
Let $\La(\Om)$ be a simple generic model set. If the Gabor frame $\G(g;\La(\Om))$, with $g\in M^1(\La(\Om))$ admits a dual that is also a Gabor system, then $D(\La(\Om))\geq 1$.
\end{proposition}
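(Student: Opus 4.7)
The plan is to mirror the density argument given just above for weighted Gabor systems, built around the Wexler--Raz biorthogonality relations of Theorem~\ref{thm:main thm}~$iii)$ applied at the single index $\eta = 0$ together with a Bessel/Cauchy--Schwarz bound on $\innerBig{h}{g}$.

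First, I would apply Theorem~\ref{thm:main thm}~$iii)$ at $\eta = 0 \in \Ga^*$. Because $p_1^*(0) = p_2^*(0) = 0$ and, by the normalization \eqref{eq:function phi}, $\Phi(0) = \abs{\Om}(\conv{\Psi}{\Psi})(0) = 1$, the $\eta = 0$ component of the Wexler--Raz identity collapses to the scalar relation
\begin{equation*}
D(\La(\Om))\,\innerBig{h}{g} = 1, \qquad\text{i.e.}\qquad \innerBig{h}{g} = D(\La(\Om))^{-1}.
\end{equation*}

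Second, I would invoke that, since $\G(h;\La(\Om))$ is dual to the frame $\G(g;\La(\Om))$, it is itself a Bessel system for $\Lt(\R^d)$ (and in fact a frame), and the reconstruction formula $f = \sum_{\la\in\La(\Om)} \innerBig{f}{\pi(\la) g}\,\pi(\la) h$ holds on $M^1(\R^d)$. Choosing $f = h$ and pairing with $g$ gives the identity
\begin{equation*}
\innerBig{h}{g} = \sum_{\la \in \La(\Om)} \innerBig{h}{\pi(\la) g}\,\innerBig{\pi(\la) h}{g},
\end{equation*}
and Cauchy--Schwarz combined with the Bessel inequality for $\G(g;\La(\Om))$ applied to $h$ yields
\begin{equation*}
\sum_{\la \in \La(\Om)} \absbig{\innerBig{h}{\pi(\la) g}}^2 \leq B_g \,\norm{h}_2^2,
\end{equation*}
where $B_g$ is the upper frame bound of $\G(g;\La(\Om))$.

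Third, exploiting the scaling freedom $(g,h) \mapsto (cg, c^{-1} h)$ that preserves duality, I would normalize $h$ in the style of the weighted argument (the scale-invariant quantity $B_g\,\norm{h}_2^2$ is the natural target of the normalization). The calculation then delivers the upper estimate $\absbig{\innerBig{h}{g}} \leq \abs{\Om}$, exactly as in the weighted proposition. Substituting Step~1 gives $D(\La(\Om))^{-1} \leq \abs{\Om}$, and using $D(\La(\Om)) = \abs{\Om}/\mathrm{vol}(\Ga)$ one rearranges to conclude $D(\La(\Om)) \geq 1$.

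The main obstacle I expect is Step~3: extracting the sharp numerical factor that closes the estimate. In the weighted case the weight $w_\psi$ (with its tunable normalization $\norm{\psi}_2 = 1$) provides an extra handle that cleanly separates the Bessel constant from $\abs{\Om}$. In the unweighted case, the analogous separation must be done directly through the Bessel inequalities for both $\G(g;\La(\Om))$ and $\G(h;\La(\Om))$ combined with the identity $\Phi(0) = 1$, and it is in this last step that the argument for model sets differs most substantively from the lattice density proof (where the tight-frame reduction $\tilde g = S_g^{-1/2} g$ is available but fails here because $S_g^{\La(\Om)}$ does not commute with time--frequency shifts).
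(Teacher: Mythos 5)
Your plan follows the paper's route --- Wexler--Raz from Theorem~\ref{thm:main thm}~$iii)$ at $\eta=0$, the frame decomposition evaluated at $f_1=h$, $f_2=g$, a Bessel bound, and a rescaling of $h$ --- and Steps~1 and~2 are in order. The gap is in Step~3, and it is not only the obstacle you flag: the numerics do not close. In the unweighted Wexler--Raz relation the density is already built in, since $\Phi(0)=1$ gives $\inner{h}{g}=D(\La(\Om))^{-1}=\text{vol}(\Ga)/\abs{\Om}$, whereas in the weighted proposition the relation reads $\text{vol}(\Ga)^{-1}\widehat{\psi^2}(0)\inner{h}{g}=1$ with $\norm{\psi}_2=1$, i.e.\ $\inner{h}{g}=\text{vol}(\Ga)$. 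Hence the target bound $\abs{\inner{h}{g}}\le\abs{\Om}$, correct in the weighted case, is off by a factor $\abs{\Om}$ here: from $D(\La(\Om))^{-1}\le\abs{\Om}$ you only obtain $\text{vol}(\Ga)\le\abs{\Om}^2$, that is $D(\La(\Om))\ge\abs{\Om}^{-1}$, which is weaker than the claim. The paper's proof of the unweighted statement instead normalizes $\norm{h}_2^2=B_g^{-1}$ (no factor $\abs{\Om}$), bounds the reconstruction sum for the pair $(h,g)$ by $B_g\norm{h}_2^2=1$, and then $D(\La(\Om))\inner{h}{g}=1$ yields $D(\La(\Om))\ge1$.

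Moreover, the way you propose to close Step~3 cannot succeed as described. Keeping the bilinear sum $\sum_{\la\in\La(\Om)}\inner{h}{\pi(\la)g}\inner{\pi(\la)h}{g}$ and applying Cauchy--Schwarz forces you to use the Bessel bound for $\G(h;\La(\Om))$ on the second factor as well, giving $\abs{\inner{h}{g}}\le\sqrt{B_gB_h}\,\norm{g}_2\norm{h}_2$; like $B_g\norm{h}_2^2$, this quantity is invariant under $(g,h)\mapsto(cg,c^{-1}h)$ --- as you yourself observe --- so no choice of the scaling parameter can push it below a prescribed threshold, and no density estimate follows from it. What closes the paper's argument is that for the particular pair $f_1=h$, $f_2=g$ the reconstruction sum is written as the sum of squares $\sum_{\la\in\La(\Om)}\abs{\inner{h}{\pi(\la)g}}^2$, so a single Bessel bound together with the normalization $\norm{h}_2^2=B_g^{-1}$ gives $\inner{h}{g}\le1$ directly. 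Your proposal neither establishes this identity nor supplies a substitute for it, so Step~3 remains a genuine gap even apart from the factor-of-$\abs{\Om}$ mismatch.
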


\begin{proof}
Let $g\in M^1(\R^d)$ and assume that the Gabor frame $\G(g;\La(\Om))$ admits a dual $\G(h;\La(\Om))$ with $h\in M^1(\R^d)$. Let $B_g$ be the upper frame bound of $\G(g;\La(\Om))$, and we can assume without loss of generality that $\norm{h}_2^2 = B_g^{-1}$. Then we have the frame decomposition
\begin{equation*}
\inner{f_1}{f_2} = \sum_{\la\in\La(\Om)} \inner{f_1}{\pi(\la) g} \inner{\pi(\la) h}{f_2} \quad \mbox{for all $f_1,f_2\in\Lt(\R^m)$,}
\end{equation*} 
If we set $f_1=h$ and $f_2=g$, by the Bessel property of $\G(g;\La(\Om))$, we obtain
\begin{equation*}
\inner{h}{g} = \sum_{\la\in\La(\Om)} \abs{\inner{h}{\pi(\la) g}}^2 \leq B_g \norm{h}_2^2 = 1\,.
\end{equation*}
On the other hand, by Theorem~\ref{thm:main thm}~$iii)$ with $L=1$, $D(\La(\Om)) \inner{h}{g} = 1$, and therefore $D(\La(\Om)) \geq 1$.
\end{proof}



\section{Acknowledgment}
This research was supported by the  Austrian Science Fund, FWF Project (V312-N25). 


\bibliographystyle{abbrv}

\end{document}